\documentclass{amsart}

\usepackage{amsmath,amssymb,amsthm}
\usepackage{hyperref}
\usepackage{xcolor}

\newtheorem{thmintr}{Theorem}

\newtheorem{prop}{Proposition}[subsection]
\newtheorem{thm}[prop]{Theorem}
\newtheorem{lem}[prop]{Lemma}
\newtheorem{cor}[prop]{Corollary}

\theoremstyle{definition}

\newtheorem{rem}[prop]{Remark}
\newtheorem{exwith}[prop]{Example}

\newtheorem*{ack}{Acknowledgement}

\def\co{\colon\thinspace}

\newcommand{\AAA}{\mathcal A}

\newcommand{\CC}{\mathcal C}

\newcommand{\rmd}{\mathrm d}

\newcommand{\DD}{\mathcal D}

\newcommand{\rme}{\mathrm e}

\newcommand{\EE}{\mathcal E}

\newcommand{\FF}{\mathcal F}

\newcommand{\II}{\mathcal I}

\newcommand{\MM}{\mathcal M}

\newcommand{\OO}{\mathcal O}

\newcommand{\R}{\mathbb R}

\renewcommand{\SS}{\mathcal S}

\newcommand{\UU}{\mathcal U}

\newcommand{\VV}{\mathcal V}

\newcommand{\bfx}{\mathbf x}
\newcommand{\XX}{\mathcal X}

\newcommand{\bfy}{\mathbf y}

\newcommand{\lra}{\longrightarrow}
\newcommand{\ra}{\rightarrow}

\DeclareMathOperator{\Diff}{\mathrm{Diff}}

\DeclareMathOperator{\End}{\mathrm{End}}

\DeclareMathOperator{\id}{\mathrm{id}}

\DeclareMathOperator{\Int}{\mathrm{Int}}

\DeclareMathOperator{\st}{\mathrm{st}}

\DeclareMathOperator{\supp}{\mathrm{supp}}

\begin{document}

\author{Florian Buck}
\author{Christopher Schmidt}
\author{Kai Zehmisch}
\address{Fakult\"at f\"ur Mathematik, Ruhr-Universit\"at Bochum,
Universit\"atsstra{\ss}e 150, D-44801 Bochum, Germany}
\email{Florian.Buck@rub.de}
\email{Christopher.Schmidt-y8n@rub.de}
\email{Kai.Zehmisch@rub.de}

\title[A contact-geometric variant of Smale's contraction]
{A contact-geometric variant of Smale's contraction}

\date{06.09.2026}

\begin{abstract}
Motivated by Smale's contraction
of the compactly supported diffeomorphism group
of the two-disc,
we study contact forms
on an open rotationally symmetric Darboux ball
that agree with the standard contact form
outside a compact set
and whose Reeb flows have no trapped orbits.
Such forms are called vertically convex.

In dimensions three and five,
we identify the quotient of their space
by compactly supported diffeomorphisms
with a contractible monodromy space
and prove an equivariant product splitting.
Consequently,
the orbit of the standard contact form
is a strong deformation retract.
It follows that the space
of vertically convex contact forms
is contractible in dimension three
and homotopy equivalent
to the compactly supported diffeomorphism group,
hence connected,
in dimension five.

The analogous subspace of forms
defining the standard contact structure
has the homotopy type
of the corresponding compactly supported
contactomorphism group in dimension five
and is contractible in dimension three.
\end{abstract}

\subjclass[2020]{53D35; 37C27, 37J55, 57R17.}

\maketitle


\section{Introduction\label{sec:intro}}

Smale's contraction \cite{sm59}
of the compactly supported diffeomorphism group
of the open two-disc
may be interpreted in terms of a space of vector fields
whose trajectories are not trapped.
The present paper develops
a contact-geometric variant of this point of view.
We replace such vector fields by contact forms
whose Reeb flows have no trapped orbits
and study their space
through the induced symplectic monodromy.
This leads to an equivariant product splitting
and determines the homotopy type
of the resulting spaces
in dimensions three and five.

The boundary condition underlying
this class of contact forms
was introduced by Eliashberg and Hofer \cite{eh94}.
Let $(M,\xi)$ be a connected, compact, cooriented
$(2n+1)$-dimensional contact manifold with boundary.
They require a contact embedding
of a neighbourhood $U$ of $\partial M$
into $\R\times\R^{2n}$,
called the {\bf gluing map},
such that $\partial M$ is mapped
onto the unit sphere
$S^{2n}=\partial D^{2n+1}$
and $U$ into the closed unit ball $D^{2n+1}$.
Here,
$\R\times\R^{2n}$ is equipped with
the standard contact structure $\xi_{\st}$
defined by
\[
\alpha_{\st}:=
\rmd b+\tfrac12(\bfx\rmd\bfy-\bfy\rmd\bfx)
\,,
\]
where $b\in\R$ denotes the vertical coordinate.
The gluing map allows one to choose
a $\xi$-defining contact form $\alpha$
whose restriction to $U$ is the pullback
of $\alpha_{\st}$
and to complete $(M,\alpha)$
by attaching the exterior of $D^{2n+1}$.
The resulting contact form agrees with
$\alpha_{\st}$ outside a compact set.
A Reeb orbit through $M$ is called trapped
in forward or backward time
if it does not leave $M$ through the boundary
in the corresponding time direction.
The form $\alpha$ is called {\bf vertically convex}
if its completed Reeb flow has no trapped orbits.
The contact structure $\xi$ is called
{\bf vertically convex}
if it admits such a defining contact form.

For $n=1$,
Eliashberg and Hofer \cite{eh94}
proved by means of holomorphic-disc fillings
that $\xi$ is vertically convex
if and only if it is {\bf hypertight},
meaning that it admits a defining contact form
without contractible periodic Reeb orbits.
In this situation,
\cite[Theorem~1]{eh94} implies that
$M$ is diffeomorphic to $D^3$.
In the subsequent remarks and questions,
Eliashberg and Hofer further state that
$(M,\rmd\alpha)$ is odd-symplectomorphic to
$(D^3,\rmd x\wedge\rmd y)$
and, using the uniqueness
of the tight contact structure on $D^3$
\cite{elia92},
that $(M,\xi)$ is contactomorphic to
$(D^3,\xi_{\st})$;
see \cite[p.\ 1305, Items~1.a) and 1.b)]{eh94}.
These conclusions are presented there
as natural extensions of the method developed in the paper,
although the corresponding arguments are not spelled out.
We recover them below in
Proposition \ref{prop:oddsymplnotnecext}
and Corollary \ref{cor:roundball}.

In higher dimensions,
hypertightness and vertical convexity
are no longer equivalent.
Indeed, \cite{grz14} constructs contact forms
defining $\xi_{\st}$ on $D^{2n+1}$
whose Reeb vector fields agree with $\partial_b$
near the boundary
and have trapped but no periodic orbits.
These forms are hypertight
but not vertically convex.

Eliashberg and Hofer also announced
a higher-dimensional analogue
of their three-dimensional result
under the additional assumption
$H_2(M;\R)=0$;
see \cite[p.\ 1305, Item~2]{eh94}.
The details of the argument
were not included in \cite{eh94}.
The announced result was subsequently proved
without this homological assumption
in \cite[Theorem~1]{gz16b}.
In particular,
under the standard boundary condition,
hypertightness implies that
$M$ is diffeomorphic to $D^{2n+1}$
in all dimensions.
The underlying degree method
for holomorphic discs
was subsequently extended
to compact contact manifolds with boundary
modelled on more general shapes
in \cite{bschz19,beck23,kwz22}.

For the formulation of our main results,
denote by $B^m=\Int(D^m)$
the open unit ball,
by $\DD^m$ the group of compactly supported
diffeomorphisms of $B^m$
and by $\SS^{2n}\subset\DD^{2n}$
the subgroup of symplectomorphisms of
$(B^{2n},\rmd\bfx\wedge\rmd\bfy)$.
We equip $\DD^m$ with the strong
$C^{\infty}$-topology described in
Section \ref{subsec:functionspaces}
and $\SS^{2n}\subset\DD^{2n}$
with the induced topology.
The groups $\SS^2$ and $\SS^4$
are contractible by
\cite[Theorem~1.8]{agz22}
and Gromov's result \cite{grom85}, resp.
We study the space $\AAA^{2n+1}$
of vertically convex contact forms
on $B^{2n+1}$
that agree with $\alpha_{\st}$
outside a compact set,
equipped with the strong $C^{\infty}$-topology.
While \cite{efz} classifies rotationally symmetric
vertically convex domains
by contact-geometric invariants,
the present paper fixes the shape
and studies monodromy and the topology
of the resulting space of contact forms.

Following the Reeb flow
from the lower to the upper part of the boundary
associates with each element of $\AAA^{2n+1}$
a compactly supported exact symplectomorphism
of $B^{2n}$,
its {\bf monodromy},
together with an action function.
We denote by $\EE$ the space of all pairs
$(\varphi,f)\in\SS^{2n}\times C_c^{\infty}(B^{2n})$
satisfying
$\varphi^*\lambda_{\st}=\lambda_{\st}-\rmd f$,
where 
\[
\lambda_{\st}:=\tfrac12(\bfx\rmd\bfy-\bfy\rmd\bfx)
\,.
\]
The pairs $(\varphi,f)$ in $\EE$
that arise as monodromy data
are precisely those satisfying
the explicit {\bf shape inequality}
\[
-\sqrt{1-|\bfx|^2-|\bfy|^2}
<
\varphi^*\!\left(
\sqrt{1-|\bfx|^2-|\bfy|^2}
\right)-f
\,.
\]
They form the monodromy space $\MM$.
The monodromy homeomorphism
of Proposition \ref{prop:monodromyhomeo}
identifies the quotient
$\AAA^{2n+1}/\DD^{2n+1}$
with $\MM$.
Writing $\EE_0$ for the path component
of $(\id_{B^{2n}},0)$ in $\EE$
and writing $\MM_0$ for $\MM\cap\EE_0$,
Proposition \ref{prop:homoequiv}
shows that the inclusion
\[
\MM_0\lhook\joinrel\longrightarrow\EE_0
\]
is a homotopy equivalence.
For $n=1,2$,
Proposition \ref{prop:mscontractible} shows
that the monodromy space $\MM$ itself is contractible.
The push-forward action of $\DD^{2n+1}$
on $\AAA^{2n+1}$ is free
by Lemma \ref{lem:freeaction}.
Using time-$C^1$ contractions
of $\SS^2$ and $\SS^4$,
we construct a continuous global section
of the quotient projection.
The equivariant splitting
of Theorem \ref{thm:splitting} then gives
an $\DD^{2n+1}$-equivariant homeomorphism
\[
\AAA^{2n+1}
\cong
\DD^{2n+1}\times
\big(
\AAA^{2n+1}/\DD^{2n+1}
\big)
\,,
\qquad
\text{for } n=1,2
\,.
\]
In fact,
Corollary \ref{cor:orbitdeformationretracts}
shows that the orbit
$\DD^{2n+1}\cdot\alpha_{\st}$
is a strong deformation retract
of $\AAA^{2n+1}$.
Consequently,
the orbit map
\[
\DD^{2n+1}\lra\AAA^{2n+1}
\,,
\qquad
\Phi\longmapsto\Phi_*\alpha_{\st}
\,,
\qquad
\text{for } n=1,2
\,,
\]
is a homotopy equivalence.

Notice that the space $\mathcal A(M)$
appearing in \cite[p.\ 1305, Item~1.c)]{eh94}
is formally defined by the condition
$\inf(\alpha)>\pi$.
For $M=D^3$,
the holomorphic-disc foliation obtained in \cite{eh94}
shows that this condition
is equivalent to vertical convexity.
After restricting forms that are standard
near $\partial D^3$ to the open ball,
$\mathcal A(D^3)$ therefore identifies naturally
with $\AAA^3$.
As $\DD^3$ is contractible
by Hatcher's result \cite{hat83},
Corollary \ref{cor:orbitdeformationretracts}
proves the contractibility assertion
announced in
\cite[p.\ 1305, Item~1.c)]{eh94}.
More generally,
the main conclusions are:

\begin{thmintr}
\label{thmintr:homotopytypeofaaa}
The following homotopy equivalences
  \[
  \AAA^3\simeq*
  \quad\text{and}\quad
  \AAA^5\simeq\DD^5
  \]
hold true.
In particular, $\AAA^5$ is connected.
\end{thmintr}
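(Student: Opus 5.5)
The theorem follows by combining the structural results announced in the introduction with known facts about diffeomorphism groups. I take as given the equivariant splitting of Theorem \ref{thm:splitting}, the contractibility of the monodromy space $\MM$ from Proposition \ref{prop:mscontractible}, and the monodromy homeomorphism $\AAA^{2n+1}/\DD^{2n+1}\cong\MM$ of Proposition \ref{prop:monodromyhomeo}. For $n=1,2$ these give a $\DD^{2n+1}$-equivariant homeomorphism
\[
\AAA^{2n+1}\cong\DD^{2n+1}\times\MM .
\]
Since $\MM$ is contractible, projection onto the first factor is a homotopy equivalence $\AAA^{2n+1}\simeq\DD^{2n+1}$.

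The next step is to identify this equivalence with the orbit map. Under the splitting, $\alpha_{\st}$ corresponds to a point $(\id,m_0)$, where $m_0$ is the monodromy datum $(\id_{B^{2n}},0)$ of the standard form. Equivariance then means that $\Phi\mapsto\Phi_*\alpha_{\st}$ corresponds to $\Phi\mapsto(\Phi,m_0)$. The inclusion of the slice $\DD^{2n+1}\times\{m_0\}$ is a homotopy equivalence because $\MM$ is contractible. This recovers Corollary \ref{cor:orbitdeformationretracts}: a contraction of $\MM$ to $m_0$ fixing $m_0$, taken in the second factor, gives the strong deformation retraction onto the orbit. It also shows that the orbit map is injective, which is consistent with Lemma \ref{lem:freeaction}.

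For $n=1$, the group $\DD^3$ of compactly supported diffeomorphisms of $B^3$ is contractible by Hatcher's proof of the Smale conjecture \cite{hat83}. Hence $\AAA^3\simeq\DD^3\simeq *$.

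For $n=2$, the equivalence $\AAA^5\simeq\DD^5$ is exactly the orbit map equivalence. For connectedness, note that $\pi_0\DD^5$ is the group of compactly supported isotopy classes, which is isomorphic to $\pi_0\mathrm{Diff}_\partial(D^5)\cong\Theta_6$. By Kervaire--Milnor, $\Theta_6=0$. Equivalently, by Cerf's pseudo-isotopy theorem, $\pi_0\mathrm{Diff}_\partial(D^5)\cong\Theta_6$, which is trivial. So $\DD^5$, and hence $\AAA^5$, is connected.

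The main obstacle is not in this assembly but in the inputs. The hardest of these is the global section in Theorem \ref{thm:splitting}. Constructing it requires continuous contractions of $\SS^2$ and $\SS^4$ that are $C^1$ in time, so that the monodromy can be realised continuously by Reeb flows. The contractibility of $\MM$ also depends on $\SS^{2n}$ being contractible for $n=1,2$ (\cite{agz22,grom85}), transported through the homotopy equivalence $\MM_0\hookrightarrow\EE_0$ of Proposition \ref{prop:homoequiv}.

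For the present statement, the only delicate point is topological bookkeeping. Identifying $\DD^5$, with the strong $C^\infty$-topology on $B^5$, with $\mathrm{Diff}_\partial(D^5)$ up to homotopy requires the usual collar argument. I expect this to be routine.
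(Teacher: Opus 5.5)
Your proposal is correct and follows the paper's proof. Like the paper, you take the strong deformation retraction of $\AAA^{2n+1}$ onto the orbit $\DD^{2n+1}\cdot\alpha_{\st}$ (Corollary~\ref{cor:orbitdeformationretracts}, obtained from the monodromy splitting and the contraction of $\MM$), then apply Hatcher for $n=1$ (with the support and topology adaptations the paper delegates to Evers) and $\pi_0(\DD^5)\cong\Gamma_6=0$ via Cerf and Kervaire--Milnor for $n=2$. One correction to your closing commentary: the paper does not get the contraction of $\MM$ through Proposition~\ref{prop:homoequiv}. It constructs it directly in Proposition~\ref{prop:mscontractible} from the time-$C^1$ contraction of $\SS^{2n}$, using potential well loops and Alexander dilation, and the $\Psi$-transform and section need this explicit time-$C^1$ retraction, not mere contractibility.
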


Theorem \ref{thmintr:homotopytypeofaaa}
may be viewed as a contact-geometric interpretation
of a setup underlying Smale's proof \cite{sm59}
of a time-$C^1$ contraction of $\DD^2$.
In Theorem \ref{thm:homotopytypeofxx3}
we will formulate a smooth analogue of
Theorem \ref{thmintr:homotopytypeofaaa},
which inspired this work.
The following theorem is a variant of
Theorem \ref{thmintr:homotopytypeofaaa}
for the subgroup
\[
\DD^{2n+1}_{\xi_{\st}}\subset\DD^{2n+1}
\]
given by contactomorphisms
of the standard contact ball $(B^{2n+1},\xi_{\st})$
and the subspace
\[
\AAA^{2n+1}_{\xi_{\st}}\subset\AAA^{2n+1}
\]
that consists of $\xi_{\st}$-defining contact forms:

\begin{thmintr}
\label{thmintr:homotopytypeofaaacont}
The following homotopy equivalences
  \[
  \AAA_{\xi_{\st}}^3\simeq*
  \quad\text{and}\quad
  \AAA_{\xi_{\st}}^5\simeq\DD_{\xi_{\st}}^5
  \]
hold true.
\end{thmintr}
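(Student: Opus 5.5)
The plan is to restrict the equivariant splitting of Theorem~\ref{thm:splitting} to the $\xi_{\st}$-defining forms. The structure of the argument is the same as for Theorem~\ref{thmintr:homotopytypeofaaa}, with $\DD^{2n+1}$ replaced by the subgroup $\DD^{2n+1}_{\xi_{\st}}$.

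First, $\AAA^{2n+1}_{\xi_{\st}}$ is invariant under $\DD^{2n+1}_{\xi_{\st}}$. The action is free, since freeness on all of $\AAA^{2n+1}$ holds by Lemma~\ref{lem:freeaction}.

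The key claim is that every $\DD^{2n+1}$-orbit of a form in $\AAA^{2n+1}$ meets $\AAA^{2n+1}_{\xi_{\st}}$. More precisely, I want the global section $\sigma\co\MM\to\AAA^{2n+1}$ used in Theorem~\ref{thm:splitting} to take values in $\AAA^{2n+1}_{\xi_{\st}}$. I expect this to hold after a modification of the construction. The section is built from the monodromy data $(\varphi,f)$ via time-$C^1$ contractions of $\SS^{2n}$. Realising the contraction path $\varphi_t$ as the suspension of a Reeb flow amounts to producing a form of the shape $\rmd b+\lambda_{\st}+\rmd h+H_b\,\rmd b$ on the ball. Forms of the type $e^{g}\alpha_{\st}$ or $\alpha_{\st}+\rmd h$ still define contact structures with trivial monodromy deformation, and I would check that the construction can be arranged so that $\ker\sigma(\varphi,f)=\xi_{\st}$. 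The natural way to do this is to realise the exact symplectomorphism $\varphi$ by a contact isotopy of $\xi_{\st}$: a compactly supported Hamiltonian path lifts to a contactomorphism path of $(\R\times\R^{2n},\xi_{\st})$, and I would push $\alpha_{\st}$ by the resulting contactomorphism.

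Granting the section, define the map
\[
\DD^{2n+1}_{\xi_{\st}}\times\MM\lra\AAA^{2n+1}_{\xi_{\st}},\qquad (\Phi,m)\longmapsto\Phi_*\sigma(m).
\]
It is injective by freeness and by the identification of the quotient with $\MM$ (Proposition~\ref{prop:monodromyhomeo}). For surjectivity, let $\alpha\in\AAA^{2n+1}_{\xi_{\st}}$. Then $\alpha=\Phi_*\sigma(m)$ for a unique $\Phi\in\DD^{2n+1}$. Both $\alpha$ and $\sigma(m)$ define $\xi_{\st}$, so $\Phi$ preserves $\xi_{\st}$, hence $\Phi\in\DD^{2n+1}_{\xi_{\st}}$. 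Continuity of the inverse is inherited from the homeomorphism of Theorem~\ref{thm:splitting} by restriction, because that inverse sends $\alpha$ to $(\Phi,m)$ continuously.

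This gives $\AAA^{2n+1}_{\xi_{\st}}\cong\DD^{2n+1}_{\xi_{\st}}\times\MM$. By Proposition~\ref{prop:mscontractible}, $\MM$ is contractible for $n=1,2$, so $\AAA^{2n+1}_{\xi_{\st}}\simeq\DD^{2n+1}_{\xi_{\st}}$.

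For $n=1$, it remains to show that $\DD^3_{\xi_{\st}}$ is contractible. The compactly supported contactomorphism group of the standard tight ball is contractible by Eliashberg's uniqueness of tight contact structures on $D^3$ relative to the boundary. Equivalently, the space of tight structures on $D^3$ standard near $\partial D^3$ is contractible, and together with Hatcher's theorem this gives contractibility of the contactomorphism group via the fibration $\DD^3_{\xi_{\st}}\to\DD^3\to\{\text{structures}\}$. The main obstacle is the section step: ensuring that the section constructed in the proof of Theorem~\ref{thm:splitting} lands in $\xi_{\st}$-defining forms, or can be replaced by one that does, continuously in $m$.
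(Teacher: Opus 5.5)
Your reduction for $n=1,2$ follows the paper: the $\xi_{\st}$-variant of Theorem~\ref{thm:splitting} and Corollary~\ref{cor:orbitdeformationretracts} give $\AAA^{2n+1}_{\xi_{\st}}\simeq\DD^{2n+1}_{\xi_{\st}}$. Your surjectivity argument is also fine: a $\Phi$ carrying one $\xi_{\st}$-defining form to another is a contactomorphism.

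The step you call the main obstacle is already settled by the paper's construction. The section is $\Sigma([\alpha])=\rme^{h_{[\alpha]}}\alpha_{\st}=(\Psi_{[\alpha]})_*\alpha_{\st}$. Here $\Psi_{[\alpha]}$ is the time-$1$ map of the cut-off contact-Hamiltonian isotopy of Lemma~\ref{lem:contifnotrappedorbitshamiltonianmonod}, applied to the Hamiltonian monodromy isotopies that the time-$C^1$ contraction of Proposition~\ref{prop:mscontractible} supplies continuously. Hence $\Sigma(\VV)\subset\AAA_{\xi_{\st}}$, as noted in Remark~\ref{rem:retraction}. This is exactly your ``lift the Hamiltonian path and push $\alpha_{\st}$'' idea: the cut-off makes the lift standard near $\End(D_T)\setminus Z_+$, and the potential well keeps the path in $\MM$.

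Your aside about trivial monodromy is wrong. Forms $\rme^g\alpha_{\st}$ do \emph{not} have trivial monodromy in general; the image of $\Sigma$ consists of such forms, one realising each element of $\MM$ (cf.\ Remark~\ref{rem:non-representability}). Conversely, $\alpha_{\st}+\rmd h$ has trivial monodromy but does not define $\xi_{\st}$.

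The genuine gap is the contractibility of $\DD^3_{\xi_{\st}}$, which is where the paper's proof does its work.

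\textbf{Uniqueness is not contractibility.} Eliashberg's uniqueness of tight structures on $D^3$ rel boundary is a $\pi_0$-statement. It identifies the base of your fibration, the orbit of $\xi_{\st}$, with the space of tight structures standard near the boundary. It says nothing about the higher homotopy groups of that space, so your ``equivalently'' is false. A parametric input is needed. The paper uses Eliashberg--Mishachev's weak contractibility of tight structures on $\R^3$ standard at infinity, and therefore first works with $\CC=\operatorname{Cont}_c(\R^3,\xi_{\st})$.

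\textbf{The fibration must be proved.} Lemma~\ref{lem:relativegrayfibration} constructs continuous local sections of the orbit map by a relative Gray argument. Only then does the long exact sequence, together with Hatcher's theorem, give that $\CC$ is weakly contractible.

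\textbf{Transfer back to the ball.} Since the parametric input lives on $\R^3$, one must return to $B^3$. The paper uses that compact families in $\CC$ have a common compact support. It conjugates null-homotopies into $B^3$ by the contact Alexander dilation, and $A_t\circ\gamma$ connects $\gamma$ to its dilate inside $\DD_{\xi_{\st}}$.

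\textbf{Weak versus genuine contractibility.} The long exact sequence yields only weak contractibility. To get $\DD^3_{\xi_{\st}}\simeq*$, and hence $\AAA^3_{\xi_{\st}}\simeq*$ rather than merely weak contractibility, the paper uses the Gray section to show that $\DD_{\xi_{\st}}$ is a retract of an open subset of the metrisable ANR $\DD$. It therefore has the homotopy type of a CW complex, and Whitehead's theorem applies.

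None of these steps appears in your sketch.
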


The proof is given in Section \ref{sec:spofcontforms}.
Notice that $\DD_{\xi_{\st}}^3$ is contractible
as a consequence of the parametric result
of Eliashberg--Mishachev \cite{em}.

For a fixed open book decomposition,
D\"orner \cite{doe14} showed that in dimension three
every component of the space of adapted contact forms
is contractible and constructed higher-dimensional
obstructions to homotopies through adapted contact forms.
Related quotients of spaces of contact forms
defining a fixed contact structure
by the identity component of the contactomorphism group
were considered by Polterovich
\cite[Section~3]{pol02}
in connection with contact dissipation.
In our setting,
monodromy provides an explicit global parametrisation
of the quotient.

We remark that the $\xi_{\st}$-variant of
the splitting Theorem \ref{thm:splitting},
which will be used in
Theorem \ref{thmintr:homotopytypeofaaacont},
can be applied to the following problem
in dimensions $3$ and $5$,
see Remark \ref{rem:non-representability}:
Given $h\in C_c^{\infty}(B)$ --
suppressing the dimension superscript  --
such that $\rme^h\alpha_{\st}\in\AAA_{\xi_{\st}}$,
find a (necessarily unique) solution $\Phi\in\DD_{\xi_{\st}}$
of the equation
\[
\Phi_*\alpha_{\st}=\rme^h\alpha_{\st}
\,.
\]
A solution exists precisely when
$\rme^h\alpha_{\st}$ is contained in the
$\DD_{\xi_{\st}}$-orbit of $\alpha_{\st}$,
which is homeomorphic to a copy of $\DD_{\xi_{\st}}$
in the product of $\DD_{\xi_{\st}}$ with
$\AAA_{\xi_{\st}}/\DD_{\xi_{\st}}$
for $n=1,2$.
The quotient is homeomorphic to an open subset
of $\SS$ for $n=1,2$ by
Propositions \ref{prop:mscontractible}
and \ref{prop:monodromyhomeo}.

Besides these homotopy-theoretic results,
we obtain several further geometric consequences.
Motivated by the considerations in \cite{beck23,kschz20}
we will investigate vertical convexity systematically.
We show in Proposition \ref{prop:oddsymplnotnecext}
and Corollary \ref{cor:oddsymlifnotrappedorbits}
that the underlying odd-symplectic manifold $(M,\rmd\alpha)$
of the vertically convex strict contact manifold $(M,\alpha)$
shaped by $S^{2n}$ is odd-symplectomorphic to
$(D^{2n+1},\rmd\bfx\wedge\rmd\bfy)$ in all dimensions.
On the other hand,
Proposition \ref{prop:inwardextension}
and Remark \ref{rem:exotic}
show that, whenever $\Gamma_{2n+1}$ is trivial,
the gluing map extends after restriction
to a diffeomorphism $M\ra D^{2n+1}$.
Here, $\Gamma_m$ denotes the group
of twisted oriented $m$-spheres,
with $m$ referring to the dimension of the disc $D^m$
over which diffeomorphisms of $S^{m-1}$
are to be extended.
In Corollary \ref{cor:roundball}
we will show that connectedness of $\SS^2$
and an elementary integration
yield the contact uniqueness result
from \cite[p.\ 1305, Item~1.b)]{eh94} for $D^3$,
whose historical approach we will present
in Remark \ref{rem:alternative}.
Furthermore, for $n=2$, we will show
in Corollary \ref{cor:roundball}
that the vertically convex contact manifold $(M,\xi)$
is contactomorphic to $(D^5,\xi_{\st})$
using connectedness of $\SS^4$.


\section{Defining the geometry}
\label{sec:defthegeom}

We introduce vertically convex contact geometry,
see also \cite{beck23,efz,kschz20}.


\subsection{Shapes}

Let $V_S$ be an open subset of a smooth manifold $V$.
Consider smooth functions
$f_{\pm}\co V_S\ra\R$ with $f_-<f_+$
that extend continuously to the closure of $V_S$.
We use the same notation for their extensions
and assume that $f_-=f_+$ along $\partial V_S$.
Assume that the closure of the union
of the graphs $S^{\pm}$ of $f_{\pm}\co V_S\ra\R$
is equal to a compact, smooth hypersurface $S$ in $\R\times V$,
which we call {\bf strict vertically convex}.
In particular, $V_S$ is relatively compact.

We call $S^-$ the {\bf entrance set}, $S^+$ the {\bf exit set}
and $V_S$ the {\bf shadow set} of $S$.
We call the graph of $f_{\pm}|_{\partial V_S}$,
which equals $E_S:=\partial S^{\pm}$,
the {\bf equatorial set} of $S$.
If $E_S$ is a smooth submanifold
we will simply call it the {\bf equator}.
The {\bf domain of} $S$ is the closure $D_S$
in $\R\times V$ of the set of all points
$(b,v)\in\R\times V_S$
such that $f_-(v)\leq b\leq f_+(v)$.
The {\bf end} of $D_S$ is
\[
\End(D_S):=
(\R\times V)\setminus\Int(D_S)
\,.
\]
The triple $(S,V_S,f_{\pm})$ or $S$ itself
is called a {\bf shape}.
We call two shapes $(S,V_S,f_{\pm})$ and $(T,V_T,g_{\pm})$
{\bf equivalent} provided that $V_S=V_T$ and that
the functions $f_+-g_+$ and $f_--g_-$ have support in $V_S$,
being necessarily compact in $V$,
see \cite{efz}.

Let $\lambda$ be a $1$-form on $V$
such that $\rmd\lambda$ is symplectic.
The cooriented contact structure of the
{\bf contactisation} $(\R\times V,\rmd b+\lambda)$
is denoted by $\xi_{\lambda}$.
Necessary and sufficient conditions for the domains
$(D_S,\xi_{\lambda})$ and $(D_T,\xi_{\lambda})$
of equivalent shapes $(S,V_S,f_{\pm})$ and $(T,V_T,g_{\pm})$
to be contactomorphic up to the boundary
are found in \cite{efz} for subclasses of shapes.


\subsection{Shaped contact manifolds}
\label{subsec:shapedcontactmanifolds}

Let $M$ be a compact manifold with boundary
that admits a cooriented contact structure $\xi$.
We call the contact manifold $(M,\xi)$ {\bf shaped}
provided that there exist a shape $(S,V_S,f_{\pm})$
in $(\R\times V,\rmd b+\lambda)$
and a contact embedding
\[
\psi\co
\big(U,\partial M,\xi\big)
\lra
\big(D_S,S,\xi_{\lambda}\big)
\]
of an open neighbourhood $U$ of $\partial M$ in $M$.
In particular the boundary of $M$ is non-empty.
The embedding $\psi$ is called a {\bf gluing map}.
The set of $\xi$-defining contact forms $\alpha$ on $M$
such that $\psi^*(\rmd b+\lambda)=\alpha$
on an open neighbourhood of $\partial M$
is non-empty.
We call the corresponding strict contact manifolds $(M,\alpha)$ {\bf shaped}.
The {\bf completion}
\[
(\hat{M},\hat{\alpha}):=
(M,\alpha)\cup_{\psi}
\big(\!\End(D_S),\rmd b+\lambda\big)
\]
of $(M,\alpha)$ and $\psi$
is obtained by gluing $(M,\alpha)$
to $\End(D_S)$ via $\psi$.
The contact form $\hat{\alpha}$ restricts
to $\alpha$ on $M$ and to $\rmd b+\lambda$ on
$\hat{M}\setminus\Int(M)=\End(D_S)$.
Set $\hat{\xi}:=\ker\hat{\alpha}$.

Vertical convexity of $S$ ensures
that completing $(M,\alpha)$
does not create any closed Reeb orbits of $\hat{\alpha}$
other than the closed Reeb orbits of $\alpha$ on $M$.
A Reeb orbit of $(\hat{M},\hat{\alpha})$ through $\Int(M)$
is called {\bf trapped in forward time}
if its positive half-orbit does not meet $S^+$,
and {\bf trapped in backward time}
if its negative half-orbit does not meet $S^-$.
All periodic Reeb orbits are trapped.
A shaped contact manifold $(M,\xi)$
is called {\bf vertically convex}
provided that there exists a shaped, $\xi$-defining
contact form $\alpha$ such that
$\hat{\alpha}$ admits no trapped Reeb orbits.
The strict contact manifold $(M,\alpha)$
is also called {\bf vertically convex}.
Observe that a vertically convex contact manifold
does not admit connected components
without boundary.

In view of \cite[p.\ 1305, Item~1.a)]{eh94} we formulate:

\begin{prop}
 \label{prop:oddsymplnotnecext}
 Let $(M,\alpha)$ be a vertically convex strict contact manifold
 shaped by $(S,V_S,f_{\pm})$ in $(\R\times V,\rmd b+\lambda)$.
 Then $(M,\rmd\alpha)$ is odd-symplectomorphic
 to $(D_S,\rmd\lambda)$.
 In particular,
 the boundary of each connected component of $M$
 is non-empty and connected.
\end{prop}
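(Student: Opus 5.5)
Since $\hat\alpha$ has no trapped Reeb orbits, every Reeb trajectory through $M$ enters through $S^-$ and exits through $S^+$. This lets us build an odd-symplectomorphism $M\to D_S$ by matching the Reeb flow of $\hat\alpha$ with that of $\rmd b+\lambda$, whose Reeb field is $\partial_b$.

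\emph{Step 1: a flow-box parametrisation of $M$.} Let $R$ be the Reeb vector field of $\hat\alpha$ and $\phi_t$ its flow. Since $\psi^*(\rmd b+\lambda)=\alpha$ near $\partial M$, we have $\psi_*R=\partial_b$ there. Hence each $q\in S^-\cup E_S$ has a well-defined forward trajectory in $M$. Absence of forward trapping implies that for $q\in S^-$ the trajectory leaves $M$ through $S^+$ at a first exit time $\tau(q)>0$. (Points of $E_S$ are tangencies: the trajectory runs along $\partial_b$ and touches $M$ only at $q$, so there $\tau=0$.) Define
\[
\Psi\co\{(q,t): q\in \overline{S^-},\ 0\le t\le\tau(q)\}\lra M,\qquad (q,t)\mapsto\phi_t(q).
\]
Absence of backward trapping makes $\Psi$ surjective, since every point of $M$ flows backward to $S^-$. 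It is injective because trajectories are disjoint and none is periodic (periodic orbits would be trapped).

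\emph{Step 2: identification with $D_S$.} Project $S^-$ to $V_S$ via the graph of $f_-$, writing $q=(f_-(v),v)$. Define $\Theta(\Psi(q,t)):=(f_-(v)+s(v,t),v)$, where $s$ is chosen so that $s(v,\tau(q))=f_+(v)-f_-(v)$. The simplest choice is the affine rescaling $s=t\,(f_+-f_-)(v)/\tau(q)$.

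This $\Theta$ is then a homeomorphism $M\to D_S$ that preserves the Reeb lines. It pulls back $\rmd\lambda$ (pulled back from $V$) to $\rmd\alpha$, because:
\begin{itemize}
\item $\rmd\alpha$ is $\phi_t$-invariant with kernel $R$;
\item on $S^-$ near the boundary $\rmd\alpha$ restricts to $\psi^*\rmd\lambda$;
\item the map $v\mapsto$ (Reeb line) identifies the leaf space of $\ker\rmd\alpha$ with $V_S$ symplectically.
\end{itemize}
Any map sending Reeb lines to vertical lines and covering the identity on $V_S$ therefore satisfies $\Theta^*\rmd\lambda=\rmd\alpha$. Moreover, $\Theta$ can be taken to equal $\psi$ near $\partial M$ by using the true time parameter near the boundary, since there $\tau$ agrees with $f_+-f_-$ locally.

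\emph{Main obstacle: smoothness.} The real difficulty is smoothness of $\tau$ and of $\Theta$ up to the equator $E_S$, where $S$ is tangent to $\partial_b$ and the flow-box degenerates. I would avoid this by not rescaling near $\partial M$. Choose a collar $U$ on which $\psi$ is defined, and a closed subset $K\subset V_S$ such that over $V_S\setminus K$ the whole trajectory segment through $M$ lies in $U$; such a $K$ exists by compactness and continuity of exit points. Over $V_S\setminus K$, set $\Theta=\psi$. Over a neighbourhood of $K$, the entry and exit sets are transverse to $R$, so $\tau$ is smooth there by the implicit function theorem. Then interpolate $s(v,t)$ with a cutoff in $v$ and a time reparametrisation that is strictly increasing in $t$ and fixed near the endpoints. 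This makes $\Theta$ a diffeomorphism $M\to D_S$ with $\Theta^*\rmd\lambda=\rmd\alpha$.

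\emph{Consequence.} The final assertion follows because each component of $M$ is mapped onto a component of $D_S$. Each such component is a union of vertical segments over a component of $V_S$, and since $f_-=f_+$ along $\partial V_S$ its boundary is the connected sphere-like set $\overline{S^-}\cup\overline{S^+}$ glued along $E_S$. That set is non-empty, and it is connected because the component of $V_S$ is connected.
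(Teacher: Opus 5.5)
Your argument is correct and is essentially the paper's route. Your $\Theta$ is the inverse of the global Darboux chart $\Phi_\alpha$ from Proposition~\ref{prop:globalflowbox}~(c), restricted to $M$, followed by a vertical rescaling of the action domain $D_{T_\alpha}$ onto $D_S$, which plays the role of $\Delta_\chi$ in Lemma~\ref{lem:verticallydiffeomorphic}. The paper only differs in two places. It avoids your equator issue by flowing from $\{b_-\}\times V$ in the completion $\hat M$ rather than from $S^-$. It also reads off odd-symplecticity from $\Phi_\alpha^*\hat\alpha=\rmd b+\lambda$ together with verticality, instead of your basic-form argument.

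One correction: your aside that $\Theta$ can be taken equal to $\psi$ near all of $\partial M$ (``$\tau$ agrees with $f_+-f_-$'') is false whenever the monodromy $\varphi_\alpha$ is non-trivial. Near $S^+$ your $\Theta$ differs from $\psi$ by $\varphi_\alpha^{-1}$, and Remark~\ref{rem:oddsympobstr} rules out any odd-symplectic extension of $\psi$ in that case. The proposition does not need this aside, since your patching only uses $\Theta=\psi$ near $S^-$ and over $V_S\setminus K$.
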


The proof is given in Remark
\ref{rem:oddsymplnotnecext};
an alternative construction is provided in
Section \ref{subsec:oddsympuniness}.

Following the Reeb flow
of a vertically convex strict contact manifold
from $S^-$ to $S^+$ defines a diffeomorphism.
The second component w.r.t.\ the splitting $\R\times V$,
the so-called {\bf monodromy} of $(M,\alpha)$,
is a compactly supported,
exact symplectomorphism of the shadow set $(V_S,\lambda)$,
see Proposition \ref{prop:globalflowbox}.
A vertically convex contact manifold $(M,\xi)$
is called {\bf Hamiltonian} provided that there exists
a vertically convex $\xi$-defining contact form $\alpha$,
whose monodromy is a compactly supported
Hamiltonian diffeomorphism;
the strict contact manifold $(M,\alpha)$
is also called {\bf Hamiltonian}.
If the closure of the shadow set $(V_S,\lambda)$
of a shape $(S,V_S,f_{\pm})$ is a Liouville domain
and $f_{\pm}|_{\partial V_S}=0$,
then we will call $(S,V_S,f_{\pm})$ a {\bf Liouville shape}.
In Section \ref{subsec:changingtheaction} we will prove:

\begin{thm}[Contact uniqueness]
 \label{thm:monodromy}
 If a vertically convex contact manifold $(M,\xi)$
 shaped by a Liouville shape $(S,V_S,f_{\pm})$
 in $(\R\times V,\rmd b+\lambda)$ is Hamiltonian,
 then there exists a contactomorphism
 $(M,\xi)\ra\big(D_S,\xi_{\lambda}\big)$
 that restricts to the gluing map
 $\psi\co\big(U,\partial M,\xi\big)\ra\big(D_S,S,\xi_{\lambda}\big)$
 of $(M,\xi)$ in a neighbourhood of $\partial M$.
\end{thm}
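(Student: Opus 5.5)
We compare the vertically convex Hamiltonian form $\alpha$ with the standard form on $D_S$ through their monodromy data, and then deform one into the other. The proof runs in three steps.

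\textbf{Step 1: a global flow box.} First I apply the global flow box of Proposition \ref{prop:globalflowbox} to the vertically convex form $\alpha$ defining $\xi$ with the Hamiltonian property. The Reeb flow of $\hat\alpha$ then identifies $\Int(M)$, and in fact the completion, with a region of $\R\times V_S$ swept out by flowing from the entrance set $S^-$. The pulled-back form is $\rmd t+\lambda'$, where $t$ is the Reeb time and $\lambda'$ is the $t$-independent primitive of $\rmd\lambda$ read off on $S^-$. The gluing to $S^+$ is encoded by the monodromy $\varphi$ together with its action function $f$, where $\varphi^*\lambda=\lambda-\rmd f$.

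\textbf{Step 2: an isotopy to the identity.} Since $(M,\alpha)$ is Hamiltonian and $(V_S,\lambda)$ is the interior of a Liouville domain with $f_\pm|_{\partial V_S}=0$, I write $\varphi=\varphi_1$ for a compactly supported Hamiltonian isotopy $\varphi_s$ with $\varphi_0=\id$. The action functions $f_s$ are normalised to vanish near $\partial V_S$; this normalisation is where the Liouville condition enters, because it makes the compactly supported primitive unique.

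I then build a one-parameter family of strict contact manifolds $(M_s,\alpha_s)$ shaped by $S$, each equal to a flow box with monodromy data $(\varphi_s,f_s)$. For $s=0$ this is the standard domain $(D_S,\rmd b+\lambda)$. The main obstacle is that the shape inequality for $(\varphi_s,f_s)$ may fail along the path. I plan to circumvent it by not insisting on vertical convexity of the intermediate forms. Only the contact structures matter here, and the contact condition holds for any gluing.

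Concretely, I realise every $\alpha_s$ on the fixed manifold $M$ as follows. Following \emph{changing the action}, I modify $\alpha$ in a collar of $S^-$ by $\alpha+\rmd(\chi\, g_s)$, with a cutoff $\chi$ and suitable functions $g_s$. Alternatively I insert the Hamiltonian suspension form $\rmd t+\lambda+H_s\,\rmd t$, rescaled by a large constant in $t$ so that it stays contact. This produces a path of contact forms on $M$ that are standard near $\partial M$ and interpolate between $\alpha$ and a form whose monodromy is the identity with zero action.

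\textbf{Step 3: conclusion.} A form with monodromy data $(\id,0)$ is, via the flow box, strictly contactomorphic to $(D_S,\rmd b+\lambda)$ relative to the gluing map. The reason is that the flow-box coordinates $(t,v)\mapsto(f_-(v)+t,v)$ glue correctly on both $S^-$ and $S^+$, because the return time equals $f_+-f_-$ exactly when the action vanishes.

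Finally, Gray stability applied to the path $\ker\alpha_s$ on $M$ gives a contactomorphism. The path is constant near $\partial M$, so the Gray vector field is supported in the interior and the resulting isotopy is compactly supported there. Composing it with the flow-box identification yields a contactomorphism $(M,\xi)\to(D_S,\xi_\lambda)$ that agrees with $\psi$ near $\partial M$.

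I expect the delicate point to be writing down the interpolating family explicitly so that it is contact and standard near $\partial M$ for all $s$. To handle it, I would stretch the $t$-direction, which is possible because the monodromy is compactly supported in $V_S$.
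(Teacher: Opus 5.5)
There is a genuine gap in Step~2, exactly at the point you call delicate, and the remedy you propose does not work.

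First, the family $(M_s,\alpha_s)$ ``shaped by $S$ with monodromy data $(\varphi_s,f_s)$'' cannot exist once the shape inequality fails. A shaped form with a global flow box has return time $\varphi_s^*f_+-f_s-f_->0$, and without a flow box there is no monodromy at all. For data violating the inequality, the graph of $\varphi_s^*f_+-f_s$ meets the entrance set $S^-$, so there is no domain to glue, contact or not.

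Second, your concrete devices do not produce a path on the fixed $M$:
\begin{itemize}
\item Adding $\rmd(\chi g_s)$, with $\chi g_s$ constant near $\partial M$ as the boundary condition forces, leaves the Reeb line field unchanged. By Stokes it also leaves the Reeb time from $S^-$ to $S^+$ unchanged, so it alters neither the monodromy nor the action.
\item The suspension $(1+K)\rmd t+\lambda$ is contact if and only if $1+K-\lambda(X_K)>0$. This says the Reeb-time density is pointwise positive, i.e.\ the shape inequality reappears in a stronger, infinitesimal form.
\item Stretching $t$ by a factor $L$ needs a slab of height $L$ over $\supp H_s$. In the flow box of $\alpha$, the height available inside $M$ is only $g_+-f_-$. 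Compact support of the monodromy does not create room, and you give no mechanism to gain it while keeping $\alpha_s$ standard near $\partial M$.
\end{itemize}
So the obstacle returns as the contact condition instead of being circumvented.

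The missing idea is to enforce the shape inequality along the whole isotopy. This is where the Liouville hypothesis is actually used, not for uniqueness of the compactly supported primitive $f_s$, which is automatic. In Lemma~\ref{lem:pi1ofrelemistrivial} the paper composes the isotopy with a potential well loop. The Hamiltonian $P=P_0\chi_V$ equals $-P_0$ on $\supp H_t$ and rises to $0$ in a Liouville collar, where $f_-<0<f_+$ and $X_P=\rme^{-a}P'(a)R_\alpha$ has non-negative action $P'-P$. The well is deepened first, then the isotopy is run, then the well is removed. With $P_0=\max\Delta(H_t)_t+\max|H_t|$, this keeps $(\varphi_t,-F_t)$ in $\MM(S)$ for all $t$.

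The paper then does not use a suspension, whose contact condition is the pointwise one above. It uses Lemma~\ref{lem:contifnotrappedorbitshamiltonianmonod}, which cuts off the contact Hamiltonian between $S^-$ and the moving exit set $T^+_t$. That construction needs only the shape inequality, with the conformal factor $\rme^{h}$ absorbing the rest. Its time-$1$ map composed with $\Phi_\alpha^{-1}$ is the desired contactomorphism, with no appeal to Gray stability. Your Step~3 is fine, but it has nothing to act on until such a path is constructed.
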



\subsection{Function spaces}
\label{subsec:functionspaces}

The shadow set $V_S$
of a shape $(S,V_S,f_{\pm})$ in $\R\times V$
is relatively compact in the ambient manifold $V$.
Let $W$ be a smooth manifold and fix a smooth base map
$\gamma_0\co V\ra W$.
We equip $C^{\infty}(V,W)$
with the {\bf strong} $C^{\infty}$-{\bf topology}.
Following \cite[Chapter~2, p.~35]{hir76},
a basic strong $C^r$-neighbourhood of
$\gamma\in C^{\infty}(V,W)$ is specified by
a locally finite family of source charts $(\phi_i,V_i)$,
compact sets $K_i\subset V_i$,
target charts $(\psi_i,W_i)$ containing $\gamma(K_i)$
and positive numbers $\varepsilon_i$.
It consists of all maps $\delta$ taking $K_i$ into $W_i$
and satisfying
\[
\big\|
D^k(\psi_i\circ\gamma\circ\phi_i^{-1})(x)
-D^k(\psi_i\circ\delta\circ\phi_i^{-1})(x)
\big\|<\varepsilon_i
\]
for all $i$, all $x\in\phi_i(K_i)$
and $k=0,\ldots,r$.
Allowing all finite $r$ gives the strong $C^{\infty}$-topology,
see \cite[Chapter~2, p.~36]{hir76}.
For each finite $r$, the strong $C^r$-topology
also admits an intrinsic description through
the $r$-jet prolongation
\[
j^r\co C^r(V,W)\ra C^0\big(V,J^r(V,W)\big)
\,.
\]
Indeed,
it is induced from the strong topology
on the space of continuous maps,
see \cite[Chapter~2, p.~62]{hir76}.
Since the strong topology is finer
than the corresponding weak topology,
strong $C^{\infty}$-convergence implies
uniform convergence of the maps
and, in local coordinates, of all their derivatives
on compact subsets,
see \cite[Chapter~2, pp.~58--62]{hir76}.

With the base map $\gamma_0$ understood, set
\[
C_c^{\infty}(V_S,W):=
\big\{
\gamma\in C^{\infty}(V,W)
\mid
\gamma=\gamma_0\text{ on a neighbourhood of }V\setminus V_S
\big\}
\,.
\]
The {\bf support} of $\gamma$ relative to $\gamma_0$ is
\[
\supp_{\gamma_0\!}(\gamma):=
\overline{\{v\in V\mid\gamma(v)\ne\gamma_0(v)\}}
\,,
\]
where the closure is taken in $V$.
Thus, $\gamma\in C_c^{\infty}(V_S,W)$ precisely when
$\supp_{\gamma_0\!}(\gamma)$ is a compact subset of $V_S$.
We then say that $\gamma$ is
{\bf compactly supported} in $V_S$.
More specifically, for a compact set $K\subset V_S$,
we say that $\gamma$ is supported in $K$ if
$\supp_{\gamma_0\!}(\gamma)\subset K$.
For $W=V$ and $\gamma_0=\id_V$
this convention applies to diffeomorphisms.
If $W\ra V$ is a smooth vector bundle,
we use the subspace of smooth sections and take
$\gamma_0$ to be the zero section.
Affine spaces of sections are reduced to this case
by subtracting their fixed base section.
All spaces of maps, diffeomorphisms, functions,
forms and vector fields below
are understood as subspaces of these ambient mapping spaces.

We equip $C_c^{\infty}(V_S,W)$ and all its subspaces
with the topology induced from $C^{\infty}(V,W)$
and again call it the {\bf strong} $C^{\infty}$-{\bf topology}.
This convention is extrinsic:
the maps are extended by $\gamma_0$ and compared on $V$,
not regarded merely as maps on the open manifold $V_S$.
Since $\bar{V}_S$ is compact,
only finitely many members of any locally finite family
occurring in a strong neighbourhood
can meet $\bar{V}_S$;
outside this set all maps under consideration
agree with $\gamma_0$.
Consequently, the induced topology is equivalently described
by uniform convergence of the $r$-jets on $\bar{V}_S$
for every finite $r$.
Choosing an auxiliary metric on each finite jet bundle,
the corresponding uniform jet distances form
a countable defining family;
hence this topology is metrisable.
Accordingly, a sequence may converge without its supports
lying in a common compact subset of $V_S$:
the supports may approach $\partial V_S$,
provided that the $r$-jets of the maps converge uniformly
on $\bar{V}_S$ for every finite $r$.

Throughout, the strong $C^{\infty}$-topology
for spaces associated with $D_S\subset\R\times V$
is defined analogously to the one for $V_S\subset V$.

A closely related distinction is made explicit
by Cieliebak and Eliashberg in \cite[Section~6.3, p.~37]{ce14}.
For pseudo-isotopies on $\R\times M$
they use uniform $C^{\infty}$-convergence on the whole cylinder,
rather than uniform $C^{\infty}$-convergence on compact subsets,
and distinguish the two by translating
a non-trivial pseudo-isotopy.
Their example illustrates why the ambient convention
must be stated when the source is non-compact.

\begin{exwith}[Approaching the boundary]
Let $V=W=\R$, $V_S=(-1,0)$ and $\gamma_0=0$.
Choose a non-zero function
$\chi\in C_c^\infty((-2,-1))$
and, for $\nu\geq3$, set
\[
\gamma_\nu(s):=
\rme^{-\nu}\chi(\nu s)
\,.
\]
Then
$\supp(\gamma_\nu)\subset
\left(-\frac{2}{\nu},-\frac{1}{\nu}\right)
\subset V_S$.
Thus, the supports approach $\partial V_S$
and are not contained in any common compact subset of $V_S$.
On the other hand, for every $k\geq0$,
\[
\big\|\gamma_\nu^{(k)}\big\|
\leq
\rme^{-\nu}\nu^k\,
\big\|\chi^{(k)}\big\|
\lra 0
\,,
\]
where the supremum norm $\|\,.\,\|$
is taken over $\R$.
Consequently, $\gamma_\nu\ra0$
in the extrinsic strong $C^\infty$-topology
defined above.
In contrast, this sequence does not converge to $0$
in the strong $C^\infty$-topology obtained by applying
Hirsch's definition directly to the open source $(-1,0)$.
Indeed, choose $s_0\in(-2,-1)$
with $\chi(s_0)\neq0$.
The points $s_0/\nu$ form a locally finite subset
of $(-1,0)$, and choosing at these points
tolerances smaller than
$\rme^{-\nu}|\chi(s_0)|$
defines a strong $C^0$-neighbourhood of $0$
that contains none of the functions $\gamma_\nu$.
\end{exwith}


\section{Vertical integration}
\label{sec:vertint}

Let $(S,V_S,f_{\pm})$ be a shape in $(\R\times V,\rmd b+\lambda)$.


\subsection{Global flow-box theorem\label{subsec:globalflowbox}}
Consider a strict contact manifold $(M,\alpha)$
that is shaped by $(S,V_S,f_{\pm})$ via the gluing map
	$
	  \psi\co
	  \big(U,\partial M,\alpha\big)
	  \ra
	  \big(D_S,S,\rmd b+\lambda\big)
	$.
The elementary flow-box construction underlying
the global Darboux theorem \cite[Theorem~2]{eh94} yields:

\begin{prop}[Global Darboux]
 \label{prop:globalflowbox}
 The following statements are equivalent:
	\begin{enumerate}
	  \item[(a)]
	  $(M,\alpha)$ is vertically convex.
	  \item[(b)]
	  $(\hat{M},\hat{\alpha})$ has no Reeb orbits
	  trapped in backward time.
	  \item[(c)]
	  There exist a shape $(T,V_T,g_{\pm})$
	  equivalent to $(S,V_S,f_{\pm})$ with $g_-=f_-$
	  and an intrinsically defined strict contactomorphism
	  \[
	    \Phi\co
	    \big(\R\times V,D_T,\rmd b+\lambda\big)
	    \lra
	    (\hat{M},M,\hat{\alpha})
	    \,,
	  \]
	  that sends $\big(\!\End(D_T),\rmd b+\lambda\big)$
	  to $\big(\!\End(D_S),\rmd b+\lambda\big)$.
	  The restriction to
	  \[
	    Z_+:=\{b\geq g_+\}\subset\R\times V_T
	  \]
	  is of the form
	  \[
	    \Phi(b,v)=\big(b+f(v),\varphi(v)\big)
	  \]
	  for all $(b,v)\in Z_+$.
	  Here $\varphi$ is a compactly supported,
	  exact symplectomorphism of $(V_T,\rmd\lambda)$
	  with
	  \[
	  \varphi^*\lambda=\lambda-\rmd f
	  \]
	  and the function
	  \[
	  f=\varphi^*f_+-g_+
	  =(\varphi^*f_+-f_-)-(g_+-g_-)
	  \]
	  has compact support in $V_T$ and determines $g_+$
	  uniquely.
	  The restriction of $\Phi$ to $\End(D_T)\setminus Z_+$
	  is the identity map.
	  
	  In fact,
	  $\Phi(b,v)=\big(b+f(v),\varphi(v)\big)$
	  holds for all $(b,v)$ in the neighbourhood of $\End(D_T)$
	  determined by $\Phi^{-1}\big(\psi(U)\big)$,
	  where $f(v)=0$ and $\varphi(v)=v$
	  whenever $(b,v)$ lies
	  in or sufficiently close to $\End(D_T)\setminus Z_+$.
	\end{enumerate}
\end{prop}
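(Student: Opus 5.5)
The plan is to prove the cycle (a)$\Rightarrow$(b)$\Rightarrow$(c)$\Rightarrow$(a). The implication (a)$\Rightarrow$(b) is immediate from the definitions. Throughout I use that the Reeb vector field of $\rmd b+\lambda$ on $\R\times V$ is $\partial_b$: indeed $\rmd(\rmd b+\lambda)=\rmd\lambda$ is pulled back from $V$, so $\partial_b$ lies in its kernel, and $(\rmd b+\lambda)(\partial_b)=1$. Hence on $\End(D_S)$ the Reeb flow of $\hat{\alpha}$ is vertical translation.

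\emph{(b)$\Rightarrow$(a).} Suppose a point $x\in\Int(M)$ is trapped in forward time. Then its positive half-orbit stays in the compact set $M$, so its $\omega$-limit set $K\subset M$ is non-empty, compact and invariant under the full Reeb flow. A point $y\in K$ cannot lie on $S^-$, since there the flow leaves $M$ in backward time. Nor can $y$ lie on $S^+$, since there the flow leaves $M$ in forward time. Hence $K\subset\Int(M)$, or at worst $K$ meets $\partial M$ only along the equatorial set $E_S$. The equatorial set is excluded as well, because orbits through $E_S$ are vertical lines in $\End(D_S)$ that leave $D_S$ immediately. Since $K$ is invariant, the negative half-orbit of any $y\in K$ stays in $K$ and therefore never meets $S^-$. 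So $y$ is trapped in backward time, contradicting (b).

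\emph{(a)$\Rightarrow$(c).} For $v\in V_S$, start at the point $\psi^{-1}(f_-(v),v)\in\partial M$ and follow the Reeb flow of $\hat\alpha$. By (a) the orbit meets $S^+$ after a finite time $\tau(v)$. Transversality of the Reeb field to $S^\pm$ shows that $\tau$ is smooth. Near $\partial V_S$, the orbit runs inside $\psi(U)$ and is vertical, so $\tau=f_+-f_-$ there. Set
\[
\Phi(b,v):=\mathrm{Fl}_{b-f_-(v)}\big(f_-(v),v\big)
\]
for $b\geq f_-(v)$, and let $\Phi$ be the identity below $S^-$ and off $V_S$. Flowing time-$t$ conjugates $\partial_b$ to the Reeb field. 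Moreover, on $S^-$ (in fact on $\{b\le f_-\}$) we have $\Phi^*\hat\alpha=\rmd b+\lambda$, because there $\Phi$ is the identity. Since $\hat\alpha$ is Reeb-invariant, this gives $\Phi^*\hat\alpha=\rmd b+\lambda$ everywhere. Injectivity follows from uniqueness of flow lines. Surjectivity onto $\hat M$ uses (b) again: every point of $M$ came from $S^-$.

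Define $g_+:=f_-+\tau$ and let $(b_+,\varphi(v))$ denote the exit point of the orbit starting over $v$. Above $g_+$ the orbit is vertical in $\End(D_S)$, which yields $\Phi(b,v)=(b+f(v),\varphi(v))$ with $f:=\varphi^*f_+-g_+$. Pulling back the strict identity along the section $\{b=g_+\}$ then gives $\varphi^*\lambda=\lambda-\rmd f$. Finally, $\varphi$ is a diffeomorphism of $V_S$, since its inverse is obtained by flowing backwards from $S^+$. It is compactly supported, with $f$ compactly supported, because near $\partial V_S$ everything is vertical inside $\psi(U)$; the same observation gives the last assertion of (c) about the neighbourhood $\Phi^{-1}(\psi(U))$.

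\emph{(c)$\Rightarrow$(a).} The map $\Phi$ conjugates the Reeb flow of $\hat\alpha$ to the flow of $\partial_b$, under which every orbit in $D_T$ exits through $T^+$ and enters through $T^-$. Since $\Phi$ sends $\End(D_T)$ to $\End(D_S)$, these become exits through $S^+$ and entries through $S^-$, so no orbit is trapped.

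The step I expect to be the main obstacle is the regularity and boundary bookkeeping in (a)$\Rightarrow$(c). This includes smoothness of $\tau$ and $\Phi$ up to $\partial V_S$ near the equator, where the flow box has to be matched with the vertical model coming from $\psi$. It also includes showing that $T$, the graph of $g_\pm$, is a shape equivalent to $S$, that is, that $g_+-f_+$ has compact support. I would handle this by noting that orbits starting in $\psi(U)$ sufficiently close to $E_S$ remain in $\psi(U)$, where they are explicitly vertical.
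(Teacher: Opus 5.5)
Your proposal is correct, and the map you build is the paper's map. The paper sets $\Phi(b,v):=\varphi_{b-b_-}(b_-,v)$ with $b_-=\min f_-$. This agrees with your $\mathrm{Fl}_{b-f_-(v)}(f_-(v),v)$, because the flow is vertical below $S^-$.

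What differs is the logical organisation.

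\begin{itemize}
\item The paper never proves (b)$\Rightarrow$(a) directly. It proves (b)$\Rightarrow$(c) and recovers (a) from (c). Hypothesis (b) enters only in surjectivity of $\Phi$. Once $\Phi$ is a global diffeomorphism, $D_T=\Phi^{-1}(M)$ is compact. A vertical line over $V_S$ therefore cannot stay in $D_T$ and must leave through $\Phi^{-1}(S^+)$. So the absence of forward-trapped orbits comes for free from compactness.
\item You instead prove (b)$\Rightarrow$(a) by an $\omega$-limit argument, and then build $\Phi$ assuming (a). Your argument is sound: the $\omega$-limit set is invariant under the complete flow, and the flow leaves $M$ immediately at $S^+$ in forward time and at $S^-$ and $E_S$ in backward time, which is exactly what is needed. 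It is a self-contained dynamical proof that does not require constructing $\Phi$ first.
\end{itemize}

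The paper's choice of the constant Cauchy hypersurface $\{b_-\}\times V$ also removes the regularity issue you flag as the main obstacle. With that choice, $\Phi$ is the flow map composed with a smooth embedding, so it is visibly smooth on all of $\R\times V$. There is no piecewise definition across $b=f_-(v)$ and nothing special happens near $\partial V_S$. The smoothness of $g_+$ then follows from the implicit function theorem applied to $T=\Phi^{-1}(S)$.

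One step should be made explicit. Injectivity needs more than uniqueness of flow lines: you also need that each orbit meets $S^-$ (or $\{b_-\}\times V$) at most once. This holds because orbits are vertical in $\End(D_S)$ before they enter $M$ and after they exit through $S^+$. The paper phrases it as: every crossing of a separating hypersurface is transverse with the same co-orientation.
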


\begin{proof}
  Because $(D_T,\rmd b+\lambda)$
  has no trapped Reeb orbits,
  the implication (c) $\Rightarrow$ (a) follows.
  The implication (a) $\Rightarrow$ (b)
  follows directly from the definition. 
  
  We are left with (b) $\Rightarrow$ (c).
  Denote by $\varphi_t$ the Reeb flow of $\hat{\alpha}$,
  which is complete by construction.
  Denote by $b_-$ the minimum of $f_-$.
  Then $\{b_-\}\times V$ is a well-defined
  hypersurface in $\hat{M}$ transverse
  to the Reeb flow $\varphi_t$.
  Using $\{b_-\}\times V$ as a Cauchy hypersurface,
  we define a map $\Phi\co\R\times V\ra\hat{M}$ by setting
  \[
    \Phi(b,v):=\varphi_{b-b_-}(b_-,v)
    \,.
  \]
  The map is injective,
  because each Reeb orbit intersects
  the separating hypersurface $\{b_-\}\times V$ at most once,
  as all intersections are transverse and
  occur with the same co-orientation.
  The map is surjective,
  because all Reeb orbits intersect $\{b_-\}\times V$
  by assumption (b).
  Therefore, $\Phi^{-1}$ exists.
  The differential $T_{(b,v)}\Phi$ evaluates on $\partial_b$ to
  the Reeb vector field $R_{\hat{\alpha}}$ of $\hat{\alpha}$
  taken at $\varphi_{b-b_-}(b_-,v)$;
  it evaluates on $T(\{b\}\times V)$ to $T_v\varphi_{b-b_-}(b_-,\,.\,)$. 
  Since $\varphi_t$ is a diffeomorphism for each $t$,
  the Reeb vector field $R_{\hat{\alpha}}$ is transverse
  to the level sets $\varphi_{b-b_-}(b_-,V)$.
  Hence, $\Phi$ is an immersion.
  The implicit function theorem yields smoothness
  of the inverse $\Phi^{-1}$.
  Moreover,
  $\Phi^*\hat{\alpha}$ evaluates on $\partial_b$ to $1$
  as we integrate $R_{\hat{\alpha}}$;
  it restricts to $\lambda$ on $T(\{b\}\times V)$ for all $b\in\R$
  because the Reeb flow of $\hat{\alpha}$ preserves $\hat{\alpha}$
  and the restriction of $\hat{\alpha}$ to $T(\{b_-\}\times V)$
  equals $\lambda$.
  Hence, $\Phi^*\hat{\alpha}=\rmd b+\lambda$.
    
  We verify the remaining properties of $\Phi$ under assumption (b).
  The hypersurface $T:=\Phi^{-1}(S)$
  in $\big(\R\times V,\rmd b+\lambda\big)$
  is transverse to $\partial_b$ along $T\setminus\Phi^{-1}(E_S)$.
  Since $\Phi$ is a diffeomorphism and $M$ is compact,
  the domain
  \[
  D_T:=\Phi^{-1}(M)
  \]
  is compact.
  By construction,
  $\Phi(b,v)=(b,v)$ for all $(b,v)\in\R\times V_S$
  with $b\leq f_-(v)$.
  Hence, for every $v\in V_S$,
  the vertical line through $v$ enters $D_T$
  through $\big(f_-(v),v\big)$
  and can leave it only through $\Phi^{-1}(S^+)$.
  Its intersection with $D_T$ is therefore a compact interval.
  By the implicit function theorem,
  its upper endpoint depends smoothly on $v$.
  Thus, $T$ is a strict vertically convex hypersurface
  with shadow set $V_T=V_S$ and lower graph $g_-=f_-$.
  The shapes $(T,V_T,g_{\pm})$ and $(S,V_S,f_{\pm})$
  are equivalent with $g_-=f_-$,
  because $T$ coincides with $S$
  along a neighbourhood of the closure of $S^-$.
  
  By construction,
  $\Phi$ is the identity map
  in a neighbourhood of $\End(D_T)\setminus Z_+$.
  On a neighbourhood of $Z_+$,
  the map $\Phi$ is a strict contactomorphism
  onto its image with respect to $\rmd b+\lambda$.
  In particular, $\Phi_*\partial_b=\partial_b$.
  Hence,
  $\Phi(b,v)=\big(b+f(v),\varphi(v)\big)$
  for all $(b,v)$ in this neighbourhood.
  Notice that $f$ has compact support in $V_T$.
  A repetition of the argument with $\Phi^{-1}|_{\End(D_S)}$
  shows that $\varphi$ is a compactly supported
  diffeomorphism of $V_T$.
  Moreover, since $\rmd b+\lambda$ equals
  $\Phi^*\big(\rmd b+\lambda\big)=\rmd b+\rmd f+\varphi^*\lambda$
  on a neighbourhood of $Z_+$,
  we get $\varphi^*\lambda=\lambda-\rmd f$ on $V_T$.
  Finally,
  for all $v\in V_T$ there exists a unique $w\in V_S$
  such that $\Phi\big(g_+(v),v\big)=\big(g_+(v)+f(v),\varphi(v)\big)$
  is equal to $\big(f_+(w),w\big)$.
  Hence,
  $w=\varphi(v)$ and $\varphi^*f_+=g_++f$.
\end{proof}

A formulation of Proposition \ref{prop:globalflowbox}
with (b) replaced by the statement
that $(\hat{M},\hat{\alpha})$ has no Reeb orbits
trapped in {\it forward} time
can be obtained by time reversal
induced by multiplying contact forms by $-1$.
Consequently,
$(M,\alpha)$ is not vertically convex
if and only if 
$(\hat{M},\hat{\alpha})$ has trapped Reeb orbits
in forward {\it and} backward time.

\begin{rem}[Free action and strict'n'exact]
\label{rem:freeactionstrnex}
  As in the final part of the proof of Proposition \ref{prop:globalflowbox}
  we have for all diffeomorphisms $\Phi$ of $\R\times V$
  the following equivalence:
  $\Phi_*\partial_b=\partial_b$ if and only if
  there exists a smooth function $f$ on $V$ and
  a smooth diffeomorphism $\varphi$ of $V$ such that
  $\Phi=(b+f,\varphi)$.
  In particular,
  $\Phi=\id$ if and only if there exists $b_0\in\R$
  such that $\Phi(b_0,v)=(b_0,v)$ for all $v\in V$.
  
  Notice that, as above,
  $\Phi$ is a strict contactomorphism of $\rmd b+\lambda$
  if and only if
  $\Phi=(b+f,\varphi)$ with $\varphi^*\lambda=\lambda-\rmd f$.
  In fact, $\Phi$ is determined uniquely
  by the exact symplectomorphism $\varphi$ of $(V,\rmd\lambda)$
  and the primitive $-f$ of $\varphi^*\lambda-\lambda$
  that takes prescribed values at the base points
  of the connected components of $V$.
The map $\varphi\mapsto f$ is continuous
in the strong $C^{\infty}$-topology described in
Section \ref{subsec:functionspaces}.
Indeed, $\varphi^*\lambda-\lambda$
depends smoothly on the first jet of $\varphi$.
On a locally finite cover by coordinate balls,
its normalised primitive $-f$ is obtained from
the standard Poincar\'e homotopy operator,
with the constants fixed at the prescribed base points.
These local constructions are continuous
with respect to all finite-order $C^r$-controls,
and local finiteness yields the asserted continuity
in the strong $C^{\infty}$-topology.
\end{rem}

\begin{rem}[Lyapunov time function]
The diffeomorphism $\Phi$ from
Proposition \ref{prop:globalflowbox}~(c)
defines a time function
$\tau:=\Phi_*b=b\circ\Phi^{-1}$
for the Reeb flow of $\hat{\alpha}$,
satisfying
$\rmd\tau(R_{\hat{\alpha}})=1$.
Its level sets form a foliation of $\hat{M}$
by $\rmd\hat{\alpha}$-symplectic hypersurfaces.
\end{rem}

\begin{rem}[Odd-symplectic uniqueness]
\label{rem:oddsymplnotnecext}
In the notation of Proposition \ref{prop:globalflowbox} (c),
the domains $(D_S,\rmd\lambda)$ and $(D_T,\rmd\lambda)$
of the equivalent shapes $(S,V_S,f_{\pm})$
and $(T,V_T,g_{\pm})$
are odd-symplectomorphic,
see \cite[Remark~4.1.2]{efz}.
Composing such an odd-symplectomorphism
with the restriction of the global flow-box contactomorphism
$\Phi$ to $D_T$ shows that
$(M,\rmd\alpha)$ is odd-symplectomorphic
to $(D_S,\rmd\lambda)$,
cf.\ Section \ref{subsec:oddsympuniness}.
\end{rem}


\subsection{Global Darboux chart}
\label{subsec:globaldarbouxchart}

Let $(M,\alpha)$ be a vertically convex strict contact manifold
shaped by $(S,V_S,f_{\pm})$.
We call the strict contactomorphism
 \[
   \Phi_{\alpha}\equiv\Phi_{(M,\alpha)}:=\Phi\co
   \big(\R\times V,D_{T_{\alpha}},\rmd b+\lambda\big)
   \lra
   (\hat{M},M,\hat{\alpha})
 \]
provided by Proposition \ref{prop:globalflowbox} (c)
the {\bf global Darboux chart}.
Since it is obtained
from the Cauchy problem for $R_{\hat{\alpha}}$,
it is intrinsically determined by $(M,\alpha)$
together with the associated gluing map $\psi$.
Here $\alpha$ varies in the space of vertically convex
contact forms on the fixed manifold $M$
that are equal to $\psi^*(\rmd b+\lambda)$
in a neighbourhood of $\partial M$.
We equip this space with the strong $C^{\infty}$-topology
induced from the affine space of smooth one-forms
with this prescribed boundary condition near $\partial M$,
see Section \ref{subsec:functionspaces}.
The Reeb vector field $R_{\hat{\alpha}}$
depends smoothly on the first jet of $\hat{\alpha}$.
Hence, standard smooth dependence of flows
on vector fields shows that $\Phi_{\alpha}$
depends continuously on $\alpha$
on every compact subset of $\R\times V$.
Thus, the global Darboux charts are compared here
in the compact-open $C^{\infty}$-topology
on the non-compact completions.

 By Proposition \ref{prop:globalflowbox} (c),
 a global Darboux chart $\Phi_{\alpha}$ defines
 	\begin{enumerate}
	 \item the {\bf monodromy} $\varphi_{\alpha}:=\varphi$,
	 \item the {\bf action difference} $f_{\alpha}:=f$ and
	 \item the {\bf super action range} $Z_{\alpha}:=Z_+$
 	\end{enumerate}
 of $(M,\alpha)$.
 The {\bf action domain} $D_{T_{\alpha}}\!$ of $(M,\alpha)$
 is determined by the equivalent shape
 $(T_{\alpha},V_{T_{\alpha}},g_{\pm}^{\alpha})$
 given by $V_{T_{\alpha}}\!=V_S$, $g_-^{\alpha}=f_-$ and
 $g_+^{\alpha}=\varphi_{\alpha}^*f_+-f_{\alpha}$.
On $Z_{\alpha}$,
the chart $\Phi_{\alpha}$ is given by
$(b+f_{\alpha},\varphi_{\alpha})$;
this expression determines all the above data.
The parameter-dependent implicit function theorem,
applied to the unique intersection of the Reeb flow
with $S^+$,
shows that $\varphi_{\alpha}$ depends continuously
on $\alpha$ with uniform control of all finite jets
on $\bar{V}_S$.
By Remark \ref{rem:freeactionstrnex},
the same holds for $f_{\alpha}$,
and hence also for
$g_+^{\alpha}=\varphi_{\alpha}^*f_+-f_{\alpha}$.
By Section \ref{subsec:functionspaces},
this is precisely continuity
in the extrinsic strong $C^{\infty}$-topology.
The corresponding dependence of
$D_{T_{\alpha}}$ and $Z_{\alpha}$
is understood through their defining graph function
$g_+^{\alpha}$.


\subsection{Monodromy space}
\label{subsec:monodromyspace}

The group
of compactly supported symplectomorphisms
\[\SS\]
of $(V_S,\rmd\lambda)$
is equipped with the strong $C^{\infty}$-topology
described in Section \ref{subsec:functionspaces}.
In view of the distinguished primitive $\lambda$
we also consider the subgroup
\[\EE\]
of exact symplectomorphisms,
equipped with the induced topology.
By a result of Giroux,
every element of $\SS$
is isotopic, through compactly supported symplectomorphisms,
to an element of $\EE$; in particular, $\EE$ meets every
path component of $\SS$,
see \cite[Lemma~7.3.4]{gei08}.

Motivated by Remark \ref{rem:freeactionstrnex}
we denote the elements of $\EE$ by $(\varphi,f)$,
where $f$ denotes the compactly supported
smooth function defined by
$\varphi^*\lambda=\lambda-\rmd f$
in case $\partial V_S$ is connected.
Otherwise,
we restrict to all $\varphi\in\SS$
for which such a $f\in C_c^{\infty}(V_S)$ exists.
Each $(\varphi,f)\in\EE$ can be identified with the
strict contactomorphism $(b+f,\varphi)$
of $\big(\R\times V,\rmd b+\lambda\big)$
with $f$ and $\varphi$ having support in $V_S$.
By Remark \ref{rem:freeactionstrnex}
the dependence of $f$ on $\varphi$ is continuous.
Accordingly, we regard $\EE$ as the graph
of this continuous assignment in
$\SS\times C_c^{\infty}(V_S)$;
this does not change its induced topology.

The {\bf monodromy space}
\[
  \MM(S)
\]
of $(S,V_S,f_{\pm})$
is the subset of all $(\varphi,f)$ in $\EE$
satisfying the {\bf shape inequality}
\[
  f_-<\varphi^*f_+-f
  \,.
\]
By Section \ref{subsec:globaldarbouxchart},
the assignment
$\alpha\mapsto(\varphi_\alpha,f_\alpha)$
defines a continuous map into $\MM(S)$
for vertically convex contact forms $\alpha$ on $M$.
Lemma \ref{lem:openshapeinequality} below
shows that $\MM(S)$ is open in $\EE$.

\begin{lem}[Openness of the shape inequality]
\label{lem:openshapeinequality}
The monodromy space $\MM(S)$ is open in $\EE$
with respect to the strong $C^{\infty}$-topology.
\end{lem}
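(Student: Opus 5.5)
The plan is to split $V_S$ into a compact core, where a uniform positive gap makes $C^0$-closeness sufficient, and a thin collar of $\partial V_S$, where the gap degenerates but every admissible perturbation is forced to vanish to high order. Fix $(\varphi,f)\in\MM(S)$ and set
\[
h:=\varphi^*f_+-f-f_-\,,
\]
which is positive on $V_S$ by the shape inequality. For a nearby $(\varphi',f')\in\EE$ put $h':=\varphi'^*f_+-f'-f_-$. The goal is a strong $C^{\infty}$-neighbourhood, in fact one controlled by $C^r$-jets on $\bar V_S$ for a fixed finite $r$, on which $h'>0$. The difficulty is that $h\to0$ at $\partial V_S$, because $f_-=f_+$ there. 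By the Example in Section~\ref{subsec:functionspaces}, nearby elements may have supports creeping towards $\partial V_S$, so a pure $C^0$ estimate cannot suffice.

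First I fix a collar. Since $\varphi$ and $f$ have compact support in $V_S$, I choose a collar $C_\delta=\{v\in V_S\mid \operatorname{dist}(v,\partial V_S)<\delta\}$, for an auxiliary metric on $V$, disjoint from $\supp\varphi\cup\supp f$. Then on $C_\delta$ we have $h=f_+-f_-$. On the compact core $K:=V_S\setminus C_\delta$ we have $h\geq c_K>0$. Moreover, $f_+$ is smooth on a neighbourhood of $\varphi'(K)$ whenever $\varphi'$ is $C^0$-close to $\varphi$. So if $\varphi'$ is uniformly close to $\varphi$ and $f'$ to $f$, then $|h'-h|<c_K$ on $K$, and the core is settled.

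Second, I need geometric bounds from $S$ near the equator. Since $S$ is a compact smooth hypersurface containing both graphs and is tangent to $\partial_b$ along $E_S$, a local description of $S$ near $E_S$ as $\{\rho=k(b,\theta)\}$ with $k$ smooth, where $\rho$ is a normal coordinate to $\partial V_S$ and $k\leq C(b-b_0(\theta))^2$ near the equator, yields two estimates for $v\in C_\delta$ and $d:=\operatorname{dist}(v,\partial V_S)$, with $\delta$ shrunk if necessary:
\[
f_+(v)-f_-(v)\geq c\,d^{1/2}\,,\qquad |\nabla f_+|\leq C\,d^{-1/2}\,.
\]

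Third, I estimate the perturbation on the collar, where $h'-h=f_+\circ\varphi'-f_+-f'$. All elements of $\EE$ are extended by $\id$ and $0$, so $\varphi'-\id$ and $f'$ have vanishing jets of all orders along $\partial V_S$. If their $C^{r}$-jets are $\varepsilon$-small on $\bar V_S$, Taylor's theorem along normal segments gives
\[
|\varphi'(v)-v|\leq C\varepsilon d^{r}\,,\qquad |f'(v)|\leq C\varepsilon d^{r}\,.
\]
With $r\geq1$ the point $\varphi'(v)$ stays at distance comparable to $d$ from the boundary, so the mean value theorem gives $|f_+(\varphi'(v))-f_+(v)|\leq C'\varepsilon d^{r-1/2}$. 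Hence
\[
h'\geq c\,d^{1/2}-C''\varepsilon\,(d^{r-1/2}+d^{r})>0
\]
on $C_\delta$ for $\varepsilon$ small, and together with the core estimate this gives openness.

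The main obstacle is the second step, the square-root lower bound for $f_+-f_-$ together with the matching gradient bound for $f_+$ near $E_S$. The definition of a shape gives only smoothness of $S$ and $f_-<f_+$, not nondegenerate convexity. What saves it is that the vertical tangency of the smooth hypersurface $S$ along the equator forces $\rho$ to be at most quadratic in $b-b_0$, so only the non-sharp inequality $k\leq C(b-b_0)^2$ is needed. I would state this as an auxiliary lemma and check uniformity over the compact equator $E_S$. The gradient bound follows from $\partial_b k=0$ on $E_S$ and the implicit function theorem. A secondary point is that $\varphi'(v)$ must remain in the collar, where $f_+$ is controlled, which follows from the estimate $|\varphi'(v)-v|\leq C\varepsilon d^{r}$.
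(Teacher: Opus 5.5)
Your core estimate is fine. The collar lower bound $f_+-f_-\geq c\,d^{1/2}$ is also fine: it only uses $k\leq C(b-b_0)^2$. It even survives without a smooth $\partial V_S$, which the paper does not assume; in its chart $V_S=\{s<h(w)\}$, with $h$ merely continuous, one gets $f_+-f_-\geq c\sqrt{h(w)-s}$.

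\textbf{The gap.} The gap is the gradient bound $|\nabla f_+|\leq C\,d^{-1/2}$. Differentiating $k(f_+(\rho,\theta),\theta)=\rho$ gives $\partial_\rho f_+=1/\partial_bk(f_+,\theta)$. The fact that $\partial_bk=0$ on $E_S$ only yields $|\partial_bk|\leq C|b-b_0|$, which is a \emph{lower} bound on $|\partial_\rho f_+|$, not an upper bound. The upper bound you need would require $\partial_bk\geq c|b-b_0|$, i.e.\ a nondegenerate fold, and that is not part of the definition of a shape. For $S=\{\rho=(b-b_0)^4\}$ one has $f_+=b_0+\rho^{1/4}$ and $\partial_\rho f_+=\tfrac14\rho^{-3/4}$.

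Nor can you repair this by weakening to $|\nabla f_+|\leq Cd^{-N}$ and taking $r$ large. On $S^+$ one only knows $\partial_bk>0$, and $\partial_bk$ can be super-polynomially small relative to $k$. For instance, take $S=\{\rho=K(|b-b_0|)\}$ with $K(t)=\int_0^t\rme^{-1/\tau^2}\big(\sin^2(1/\tau)+\rme^{-\rme^{1/\tau^2}}\big)\rmd\tau$. This is a legitimate smooth local model, since $K$ is flat at $0$, and at $t_j=1/(j\pi)$ one has $K'(t_j)\leq K(t_j)^N$ for every $N$ once $j$ is large. So $\sup|\nabla f_+|$ near $v$ is not polynomially controlled in $d$, and your mean-value estimate for $f_+\circ\varphi'-f_+$ is unavailable.

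\textbf{The missing idea.} You should exploit the blow-up of $\nabla f_+$ rather than bound it, which is what the paper does. Argue by contradiction with a sequence $(\varphi_\nu,f_\nu)\to(\varphi,f)$ and points $v_\nu$ with $g_{\nu,+}(v_\nu)\leq f_-(v_\nu)$, where $g_{\nu,+}=\varphi_\nu^*f_+-f_\nu$. Your core argument forces $v_\nu\to v_\infty\in\partial V_S$. Work in a chart with $V_S=\{s<h(w)\}$, $S=\{s=F(b,w)\}$ and $\rmd F(0,0)=0$. Along the outward segment from $v_\nu$ to $(h(w_\nu),w_\nu)$ the function $f_-$ increases, and at the endpoint $g_{\nu,+}=f_+=f_-$. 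The mean value theorem therefore gives $x_\nu$ with $\partial_sg_{\nu,+}(x_\nu)>0$.

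On the other hand, $\partial_sg_{\nu,+}=\rmd f_+(T\varphi_\nu\partial_s)-\partial_sf_\nu$. By $C^1$-convergence, $T\varphi_\nu\partial_s\to\partial_s$ and $\partial_sf_\nu\to0$. Since $\partial_bF\to0^-$ along $S^+$, this forces $\rmd f_+(T\varphi_\nu\partial_s)\to-\infty$, a contradiction. This argument uses only $C^1$-control and no quantitative information about the fold, so your weighted Taylor estimates become unnecessary. You can also keep your direct-neighbourhood format by replacing the collar step with this sign argument.
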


\begin{proof}
By Section \ref{subsec:functionspaces},
the space $\EE$ is metrisable.
It is therefore enough to consider a sequence
$(\varphi_{\nu},f_{\nu})\ra(\varphi,f)$ in $\EE$
with $(\varphi,f)\in\MM(S)$
and prove that it is eventually contained in $\MM(S)$.
Set $g_{\nu,+}:=\varphi_{\nu}^*f_+-f_{\nu}$ and
$g_+:=\varphi^*f_+-f$.
Suppose, to the contrary,
that the sequence is not eventually contained in $\MM(S)$.
After passing to a subsequence,
there exist points $v_{\nu}\in V_S$ such that
$g_{\nu,+}(v_{\nu})\leq f_-(v_{\nu})$.
Since $\bar{V}_S$ is compact
and $f_+$ is continuous on $\bar{V}_S$,
strong convergence implies that
the functions $g_{\nu,+}$ converge uniformly to $g_+$
on $\bar{V}_S$.
As $g_+-f_-$ has a positive minimum
on every compact subset of $V_S$,
the points $v_{\nu}$ must approach $\partial V_S$.
After passing to a further subsequence,
we may assume that
$v_{\nu}\ra v_{\infty}$ with $v_{\infty}\in\partial V_S$.

Set $b_{\infty}:=f_+(v_{\infty})$ and
notice that $(b_{\infty},v_{\infty})\in E_S$.
At $(b_{\infty},v_{\infty})$,
the vector $\partial_b$ is tangent to $S$.
Consequently,
the image $H$ of $T_{(b_{\infty},v_{\infty})}S$
under the differential of the projection $\R\times V\ra V$
is a hyperplane in $T_{v_{\infty}}V$.
Choose local coordinates
$(s,w)\in\R\times\R^{2n-1}$ about $v_{\infty}=(0,0)$
such that $H=\{0\}\times\R^{2n-1}$.
After translating the $b$-coordinate,
we may also assume that $b_{\infty}=0$.
The implicit function theorem then allows us
to write $S$ near $(0,0,0)$
in the form $s=F(b,w)$ for a smooth function $F$.
The choice of coordinates implies $\rmd F|_{(0,0)}=0$.
After shrinking the coordinate neighbourhood
and replacing $s$ by $-s$ if necessary,
strict vertical convexity gives the local description
$V_S=\{s<h(w)\}$ for the function $h$ defined by
$h(w):=\max_bF(b,w)$,
where the maximum is taken over the local
$b$-interval under consideration.
The function $h$ is continuous
and satisfies $h(0)=0$,
although it need not be smooth.
For all sufficiently large $\nu$,
write $v_{\nu}=(s_{\nu},w_{\nu})$ with
$s_{\nu}<h(w_{\nu})=:h_{\nu}$.
Since $(\varphi_{\nu},f_{\nu})$ is compactly supported
in $V_S$, the continuous extensions satisfy
$f_-(h_{\nu},w_{\nu})=f_+(h_{\nu},w_{\nu})=
g_{\nu,+}(h_{\nu},w_{\nu})$.
Moreover, strict vertical convexity gives
$f_-(s_{\nu},w_{\nu})<f_-(h_{\nu},w_{\nu})$ 
for all sufficiently large $\nu$.
Consequently,
$g_{\nu,+}(s_{\nu},w_{\nu})\leq
f_-(s_{\nu},w_{\nu})<
g_{\nu,+}(h_{\nu},w_{\nu})$.
The mean value theorem therefore yields
$t_{\nu}\in(s_{\nu},h_{\nu})$ such that
$\partial_sg_{\nu,+}(t_{\nu},w_{\nu})>0$.
Continuity of $h$ implies that
$x_{\nu}:=(t_{\nu},w_{\nu})$ tends to 
$v_{\infty}=(0,0)$.

On the other hand,
since $(\varphi,f)$ is compactly supported in $V_S$,
we have $\varphi=\id_V$ and $f=0$
on a fixed neighbourhood of $v_{\infty}$.
Strong convergence therefore implies
$y_{\nu}:=\varphi_{\nu}(x_{\nu})\ra v_{\infty}$
and
$u_{\nu}:=T\varphi_{\nu}(\partial_s)|_{x_{\nu}}\ra\partial_s$
and also
$\rmd f_{\nu}(\partial_s)|_{x_{\nu}}\ra0$.
Since $f_+$ is continuous on $\bar{V}_S$,
the points $\big(f_+(y_{\nu}),y_{\nu}\big)$ in the exit set $S^+$
converge to $0$.
Differentiating the identity
$s=F\big(f_+(s,w),w\big)$
on $S^+$ gives
$\rmd s=\partial_bF\cdot\rmd f_++\rmd_wF$.
Along $S^+$,
$\partial_bF\ra0^-$ and $\rmd_wF\ra0$
when approaching $0$.
Evaluating the identity on $u_{\nu}$ gives
$\partial_bF\cdot\rmd f_+(u_{\nu})=
\rmd s(u_{\nu})-\rmd_wF(u_{\nu})
\ra 1$.
Consequently,
$\rmd f_+(u_{\nu})|_{y_{\nu}}$
tends to $-\infty$.
Together with
$\rmd f_{\nu}(\partial_s)|_{x_{\nu}}\ra0$,
this gives
$\partial_sg_{\nu,+}(x_{\nu})\ra-\infty$,
contradicting
$\partial_sg_{\nu,+}(x_{\nu})>0$.
Thus, the sequence is eventually contained in $\MM(S)$.
\end{proof}

\begin{rem}[Injectivity of the action map]
\label{rem:injectivityactionmap}
The action map
\[
\EE\lra C_c^{\infty}(V_S)
\,,
\quad
(\varphi,f)\longmapsto f
\,,
\]
which is continuous by Remark \ref{rem:freeactionstrnex},
is injective.
It is enough to show
that the preimage of $f=0$ consists only of $\id_{V_S}$,
i.e.\ that $\varphi^*\lambda=\lambda$
implies $\varphi=\id_{V_S}$
for every $\varphi\in\EE$.
Indeed,
such a $\varphi$ preserves the Liouville vector field $Y$
defined by $\iota_Y\rmd\lambda=\lambda$
and hence commutes with its local flow.
Since $\varphi$ is compactly supported,
there is a neighbourhood $U$ of $\partial V_S$
contained in $V_S\setminus\supp\varphi$.
Consequently,
the fixed point set of $\varphi$
contains every $Y$-trajectory that meets $U$.
The set of points
whose positive $Y$-trajectory
does not meet $U$ has empty interior.
Otherwise,
it would contain a non-empty open subset
whose forward images remain
in the compact set $V_S\setminus U$,
contradicting the exponential growth
of the $\rmd\lambda$-volume under the Liouville flow of $Y$
and the finiteness of the $\rmd\lambda$-volume of $V_S$.
Thus, the fixed point set of $\varphi$ is dense.
Since it is also closed, we conclude that $\varphi=\id_{V_S}$.
\end{rem}

\begin{rem}[Neighbourhood retract]
\label{rem:neighbourhoodretract}
Denote by $\DD(V_S)$
the compactly supported diffeomorphism group of $V_S$,
equipped with the strong $C^{\infty}$-topology
described in Section \ref{subsec:functionspaces}.
Then $\EE$ is a subspace of
\[
\FF:=\DD(V_S)\times C_c^{\infty}(V_S)
\,.
\]
For each $(\varphi,f)\in\FF$,
define the convex interpolation form by
\[
\lambda_t\equiv
\lambda_t^{\varphi,f}
:=(1-t)\lambda
+
t\big(\varphi^*\lambda+\rmd f\big)
\,,
\]
for $t\in[0,1]$,
which joins $\lambda_0=\lambda$
to $\lambda_1=\varphi^*\lambda+\rmd f$.
This defines a continuous map
$(\varphi,f,t)\mapsto\lambda_t^{\varphi,f}$
on $\FF\times[0,1]$
such that $\EE$ is given by the equation
$\dot\lambda_t=0$.
Consider the subset $\UU$ of $\FF$
defined by the requirement that
\[
\rmd\lambda_t
=(1-t)\rmd\lambda+t\varphi^*\rmd\lambda
\]
is symplectic for all $t\in[0,1]$.
By Section \ref{subsec:functionspaces},
the subset $\UU\subset\FF$ is open
since on the compact set $\bar{V}_S\times[0,1]$
non-degeneracy of $\rmd\lambda_t$
is stable under uniform $C^1$-convergence of $\varphi$.
Moreover,
$\EE\subset\UU$,
because $\lambda_t=\lambda$ on $\EE$.
We claim that $\EE$
is a strong deformation retract
of the open neighbourhood $\UU$.

Indeed, for $(\varphi,f)$ in $\UU$,
the Moser equation
\[
\iota_{X_t}\rmd\lambda_t=-\dot\lambda_t
\]
has a unique solution.
The time-dependent vector field
$X_t\equiv X_t^{\varphi,f}$
is compactly supported in $V_S$.
Denote by
$\varphi_t=\varphi_t^{\varphi,f}$
the isotopy generated by $X_t$.
The defining formula for $X_t$
and the standard theorem on parameter-dependent
ordinary differential equations,
together with the uniform control of all jets
on $\bar{V}_S$ from
Section \ref{subsec:functionspaces},
show that $X_t$ and $\varphi_t$
depend continuously on $(\varphi,f,t)$
in the strong $C^{\infty}$-topology.
Using the Lie derivative, we obtain
\[
\varphi_t^*\lambda_t=\lambda-\rmd f_t
\,,
\quad\text{with}\quad
f_t\equiv f_t^{\varphi,f}
:=-\int_0^t\varphi_s^*\bigl(\iota_{X_s}\lambda_s\bigr)\,\rmd s
\,,
\]
where
$f_t\in C_c^{\infty}(V_S)$.
Its defining formula shows that $f_t$
has the same continuous dependence.
Define a continuous map
	 \[
	 K\co\UU\times[0,1]\lra\UU
	 \,,\quad
	 \big((\varphi,f),t\big)\longmapsto
	 K_t(\varphi,f)
	 \,,
	 \]
by setting
\[
K_t(\varphi,f)
:=\Big(\varphi\circ\varphi_t,\,\varphi_t^*f+f_t\Big)
\,.
\]
First, observe that the map $K$ is well defined.
Indeed, for all $s,t\in[0,1]$,
\[
(1-t)\rmd\lambda+t\big(\varphi\circ\varphi_s\big)^*\rmd\lambda
=
\varphi_s^*\Big(
(1-t)\rmd\lambda_s+t\varphi^*\rmd\lambda
\Big)=
\varphi_s^*\rmd\lambda_{(1-s)t+s\cdot1}
\]
is symplectic since
$\rmd\lambda_{(1-s)t+s\cdot1}$
is symplectic by assumption.
Thus,
the homotopy $K$ stays inside $\UU$.
Moreover, $K_0=\id_{\UU}$ and, by the defining identity
for $f_1$, we have $K_1(\UU)\subset\EE$.
Finally,
if $(\varphi,f)\in\EE$, then $X_t=0$ identically.
It follows that $K_t|_{\EE}=\id_{\EE}$ for all $t\in[0,1]$.
Consequently,
$K$ is a strong deformation retraction
of $\UU$ onto $\EE$.

For later use we remark that
if $(\varphi,f)$ depends smoothly on a
finite-dimensional parameter,
then the parameter-dependent ordinary differential
equation theorem shows that
$K_t(\varphi,f)$ depends smoothly on this parameter and $t$.
\end{rem}


\subsection{Vertical category}
\label{subsec:verticalcategory}

We denote by
  \[
  \VV(S)
  \]
the class of all vertically convex
strict contact manifolds shaped by $(S,V_S,f_{\pm})$
modulo strict contactomorphisms
that {\bf factorise} the gluing maps
in the following sense:
Given
$\Phi\co(M',\alpha')\ra(M,\alpha)$,
the respective gluing maps $\psi'$ and $\psi$ satisfy
$\psi'=\psi\circ\Phi$
in a neighbourhood of the boundary.
With Remark \ref{rem:freeactionstrnex} it follows that
(cf.\ the arguments below)
$\Phi$ is the restriction of
$\Phi_{\alpha}\circ\Phi_{\alpha'}^{-1}$ to $M'$.
In fact:

\begin{prop}[Monodromy map]
\label{prop:monodromymap}
  The map
  \[
  \VV(S)\lra\MM(S)
  \,,
  \quad
  [M,\alpha]\longmapsto(\varphi_{\alpha},f_{\alpha})
  \,,
  \]
  is a bijection.
\end{prop}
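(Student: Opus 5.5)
We will prove the three properties that make the map a bijection: it is well defined on equivalence classes, it is injective, and it is surjective.

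\emph{Well-definedness.} Let $\Phi\co(M',\alpha')\ra(M,\alpha)$ be a strict contactomorphism factorising the gluing maps, so $\psi'=\psi\circ\Phi$ near $\partial M'$. We extend $\Phi$ by the identity on $\End(D_S)$ to a strict contactomorphism $\hat\Phi\co(\hat M',\hat\alpha')\ra(\hat M,\hat\alpha)$. This extension is smooth because the factorisation holds on a neighbourhood of the boundary. Since $\hat\Phi$ intertwines the Reeb flows and fixes the Cauchy hypersurface $\{b_-\}\times V$, the construction in the proof of Proposition \ref{prop:globalflowbox} gives $\Phi_{\alpha}=\hat\Phi\circ\Phi_{\alpha'}$. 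Restricting to $Z_+$, where $\hat\Phi$ is the identity on the exit region, yields $(\varphi_\alpha,f_\alpha)=(\varphi_{\alpha'},f_{\alpha'})$. The same identity shows that $\Phi$ is the restriction of $\Phi_\alpha\circ\Phi_{\alpha'}^{-1}$ to $M'$, as asserted before the proposition. The image lies in $\MM(S)$: by Proposition \ref{prop:globalflowbox}, $g_+^\alpha=\varphi_\alpha^*f_+-f_\alpha$ is the upper graph function of the shape $T_\alpha$, and $g_-^\alpha=f_-<g_+^\alpha$ on $V_S$.

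\emph{Injectivity.} Suppose $(\varphi_\alpha,f_\alpha)=(\varphi_{\alpha'},f_{\alpha'})$. Then $g_+^{\alpha}=g_+^{\alpha'}$, so the action domains $D_{T_\alpha}$ and $D_{T_{\alpha'}}$ coincide. Hence $\Phi_\alpha\circ\Phi_{\alpha'}^{-1}$ maps $\hat M'$ onto $\hat M$ and $M'$ onto $M$. On $\End(D_S)$ it equals the identity: below the entrance set both charts are the identity, and on $Z_+$ both are $(b+f,\varphi)$ with the same data, so their composite is the identity there. The composite is a strict contactomorphism, and near $\partial M'$ it factorises the gluing maps because the gluing maps are the identifications of collars with $\End(D_S)$ inside the completions. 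Therefore $[M',\alpha']=[M,\alpha]$.

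\emph{Surjectivity.} This is the main step. Given $(\varphi,f)\in\MM(S)$, set $g_+:=\varphi^*f_+-f$ and $g_-:=f_-$. We must check that $(T,V_S,g_\pm)$ is a shape equivalent to $S$. The functions $g_+-f_+$ and $g_--f_-$ have compact support because $\varphi$ and $f$ do, the shape inequality gives $g_-<g_+$, and near $\partial V_S$ we have $T=S$, so $T$ is a smooth compact strictly vertically convex hypersurface. We take $M:=D_T$ with $\alpha:=\rmd b+\lambda$. For the gluing map we take $\psi:=\Theta$ on a neighbourhood of $T^+$ and $\psi:=\id$ near $T^-$, where
\[
\Theta(b,v):=\big(b+f(v),\varphi(v)\big)
\]
is the strict contactomorphism associated with $(\varphi,f)$ by Remark \ref{rem:freeactionstrnex}. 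This $\Theta$ sends $T^+$ onto $S^+$, since $\Theta(g_+(v),v)=(f_+(\varphi(v)),\varphi(v))$. The two prescriptions are consistent near the equator, where $\varphi=\id$ and $f=0$. The step to handle carefully is that the neighbourhood $U$ of $\partial D_T$ is mapped into $D_S$, and that near $T^+$ and near $T^-$ it can be chosen so that the two pieces are disjoint away from the equator. For this we use a sufficiently thin collar, on which $\Theta$ is the identity near $\partial V_S$.

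The completion $\hat M$ is $D_T$ glued to $\End(D_S)$ by $\psi$. It is identified with $\R\times V$ by the map that is $\Theta$ on $Z_+$ and the identity elsewhere, and under this identification the Reeb flow is vertical. There are therefore no trapped orbits, and the global Darboux chart is exactly this map. Consequently the monodromy data of $(D_T,\alpha)$ is $(\varphi,f)$, which proves surjectivity.
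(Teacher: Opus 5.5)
Your proposal is correct and follows essentially the same route as the paper. For well-definedness and injectivity you extend the factorising contactomorphism by the identity and compare global Darboux charts. For surjectivity you realise a given pair by $D_T$ with gluing map $\id$ near $T^-$ and $(b+f,\varphi)$ near $T^+$, which is the paper's extended mapping cylinder. The only cosmetic difference is that you obtain $\Phi_\alpha=\hat\Phi\circ\Phi_{\alpha'}$ from uniqueness of the Cauchy-problem construction, whereas the paper invokes Remark \ref{rem:freeactionstrnex}; the two arguments are equivalent.
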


\begin{proof}
Given another representative $(M',\alpha')$ of $[M,\alpha]$,
let $\Phi\co(M',\alpha')\ra(M,\alpha)$
be a strict contactomorphism factorising the gluing maps.
Since the gluing maps satisfy
$\id=\psi\circ\Phi\circ\psi'^{-1}$
near the boundary,
$\Phi$ extends to the completions
by the identity on $\End(D_S)$.
Denote this extension by $\hat{\Phi}$.
In the global Darboux charts,
$\hat{\Phi}$ is represented by
\[
\Phi_{\alpha}^{-1}\circ\hat{\Phi}\circ\Phi_{\alpha'},
\]
which is the identity by
Remark \ref{rem:freeactionstrnex}.
Hence,
$Z_{\alpha'}=Z_{\alpha}$,
and both global Darboux charts have the same restriction
$(b+f_{\alpha},\varphi_{\alpha})$ to this set.
In view of Section \ref{subsec:globaldarbouxchart}
the monodromy map is well defined.
We remark that
$\hat{\Phi}=\Phi_{\alpha}\circ\Phi_{\alpha'}^{-1}$
is determined uniquely.

To prove injectivity, suppose that another class
$[M',\alpha']$ has the same monodromy, that is,
$(\varphi_{\alpha'},f_{\alpha'})=(\varphi_{\alpha},f_{\alpha})$.
By Section \ref{subsec:globaldarbouxchart},
the respective super action ranges,
and hence the restrictions of the corresponding
global Darboux charts to them,
coincide.
Hence,
$(M',\alpha')$ is strictly contactomorphic to $(M,\alpha)$
via $\Phi_{\alpha}\circ\Phi_{\alpha'}^{-1}$,
which factorises the gluing maps.
  
  We define the inverse of the monodromy map
  by associating to each pair $(\varphi,f)$ in $\MM(S)$
  the class in $\VV(S)$ induced by the
  {\bf extended mapping cylinder} $M(S,\varphi)$
  constructed as follows: 
  As in Section \ref{subsec:globaldarbouxchart}
  define a shape $(T,V_T,g_{\pm})$ equivalent to $(S,V_S,f_{\pm})$
  via $g_-:=f_-$ and $g_+:=\varphi^*f_+-f$.
  Setting $\Phi:=\big(b+f,\varphi\big)$
  yields a strict contactomorphism
  of $(\R\times V,\rmd b+\lambda)$ with inverse
  $\Phi^{-1}=\big(b-\varphi_*f,\varphi^{-1}\big)$.
  We obtain a vertically convex strict contact manifold
  \[
  (D_T,\rmd b+\lambda)_{\psi}
  \]
  shaped by $(S,V_S,f_{\pm})$
  by equipping the domain $(D_T,\rmd b+\lambda)$
  with a gluing map $\psi$
  that equals $\id$ near $T^-$ and $\Phi$ near $T^+$.
  Denote by $M(S,\varphi)$ the union of
  $\R\times(V\setminus V_S)$
  with the {\bf mapping cylinder} of $(\varphi,f)$,
  i.e.\ the quotient of the subsets
  \[
  \big\{(b,v)\in\R\times V_S\mid b\leq g_+(v)\big\}
  \cup
  \big\{(b,w)\in\R\times V_S\mid b\geq f_+(w)\big\}
  \]
  by identifying
  \[
  \big(g_+(v),v\big)
  \quad\text{with}\quad
  \big(f_+(\varphi(v)),\varphi(v)\big)
  \,.
  \]
  The smooth structure on the mapping cylinder
  is induced by $\Phi^{-1}$,
  so that $M(S,\varphi)$ naturally carries a contact form
  denoted by $\alpha(S,\varphi)$.
  The resulting strict contact manifold
  $\big(M(S,\varphi),\alpha(S,\varphi)\big)$
  is the completion of $(D_T,\rmd b+\lambda)_{\psi}$.
  Observe, in view of Section \ref{subsec:globaldarbouxchart},
  that the global Darboux chart
  $\Phi_{\alpha(S,\varphi)}$ restricts to $\Phi$
  on the super action range $Z_{\alpha(S,\varphi)}$,
  i.e.\ $\varphi=\varphi_{\alpha(S,\varphi)}$ and $f=f_{\alpha(S,\varphi)}$.
\end{proof}

\begin{rem}
  Given $[M,\alpha]\in\VV(S)$,
  write $\varphi:=\varphi_{\alpha}$ for the monodromy
  and $f:=f_{\alpha}$ for the action difference.
  The proof of Proposition \ref{prop:monodromymap}
  gives a strict contactomorphism
  \[
    \Phi_{\alpha(S,\varphi)}\circ\Phi_{\alpha}^{-1}\co
    (\hat{M},\hat{\alpha})
    \lra
    \big(M(S,\varphi),\alpha(S,\varphi)\big)
    \,.
  \]
  This contactomorphism agrees with the identity
  on a neighbourhood of $\End(D_S)$
  and restricts to a strict contactomorphism
  \[
    (M,\alpha)
    \lra
    (D_T,\rmd b+\lambda)_{\psi_{\alpha}}
  \]
  onto the action domain of $(M,\alpha)$.
  This restriction factorises the gluing maps:
  $\psi_{\alpha}$ equals $\id$ near $T^-$
  and $\Phi_{\alpha}$ near $T^+$.
\end{rem}


\section{Contact-Hamiltonian diffeomorphisms}
\label{sec:conthamdiff}


\subsection{Group structure}
\label{subsec:groupstructure}

Let $(M,\alpha)$ be a strict contact manifold.
Set
\[
\xi:=\ker\alpha
\,.
\]
The following considerations are taken from
\cite[Section~2.3]{gei08}:
A diffeomorphism $\Phi$ of $M$
is called a {\bf contact-Hamiltonian diffeomorphism}
if $\Phi$ is the time-$1$ map $\Phi_1$
of an isotopy $\Phi_t$ of $M$ for which
there exists a smooth family of smooth functions
$h_t$ on $M$ such that
\[
\Phi_t^*\alpha=\rme^{h_t}\alpha
\]
for all $t\in[0,1]$.
Notice that $h_0=0$ and $\Phi_t$ preserves $\xi$.
We call $\Phi_t$ a {\bf contact-Hamiltonian isotopy}.
The associated {\bf contact-Hamiltonian} vector field
\[
X_t:=\dot{\Phi}_t\circ\Phi_t^{-1}
\]
satisfies the Lie derivative equation
\[
L_{X_t}\alpha=g_t\alpha
\,,
\qquad
g_t=(\Phi_t)_*\dot{h}_t
\,.
\]
Conversely, suppose that a smooth family of vector fields
$X_t$ satisfies $L_{X_t}\alpha=g_t\alpha$
for a smooth family of functions $g_t$.
Then the isotopy $\Phi_t$ generated by $X_t$
is contact-Hamiltonian, with
\[
h_t=\int_0^t\Phi_s^*g_s\,\rmd s
\,.
\]

The contact-Hamiltonian vector field $X_t$
defines a Hamiltonian function
via the {\bf first Hamiltonian equation}
\[
H_t:=\alpha(X_t)
\,.
\]
Contracting the Lie derivative equation with
$R_{\alpha}$ yields
\[
g_t=\rmd H_t(R_{\alpha})
\,,
\]
and hence the {\bf second Hamiltonian equation}
\[
\iota_{X_t}\rmd\alpha
=
\rmd H_t(R_{\alpha})\,\alpha-\rmd H_t
\,.
\]
Conversely, a smooth family of Hamiltonians $H_t$
uniquely determines the contact-Hamiltonian vector field
\[
X_{\!H_t}^{\alpha}:=H_tR_{\alpha}+Y_t
\,,
\]
where
\[
\alpha(Y_t)=0
\quad\text{and}\quad
\iota_{Y_t}\rmd\alpha=-\rmd H_t
\quad\text{on }\xi
\,.
\]
In view of the Lie derivative equation,
$X_{\!H_t}^{\alpha}$ is indeed contact-Hamiltonian.
The generated contact-Hamiltonian isotopy
is denoted by $\Phi_t=\Phi_t^{H_t}$.
Notice, in view of the initial definition above, that
\[
h_t=\int_0^t\Phi_s^*\big(\rmd H_s(R_{\alpha})\big)\rmd s
\,.
\]

Let $K_t$ be a second contact-Hamiltonian on $(M,\alpha)$,
and denote its vector field, isotopy, and conformal factor by
$X_{\!K_t}^{\alpha}$, $\Psi_t=\Psi_t^{K_t}$, and $k_t$,
resp.
The generating vector field of
$\Phi_t\circ\Psi_t$ is
\[
X_{\!H_t}^{\alpha}
+
(\Phi_t)_*X_{\!K_t}^{\alpha}
\,.
\]
Evaluating $\alpha$ on this vector field gives
the generating contact-Hamiltonian
\[
H_t+(\Phi_t)_*\bigl(\rme^{h_t}K_t\bigr)
\,.
\]
Since the constant identity isotopy is generated by $0$,
the inverse isotopy $\Phi_t^{-1}$ is generated by
\[
K_t=-\rme^{-h_t}\Phi_t^*H_t
\,.
\]
A contact-Hamiltonian isotopy $\Phi_t$ generated by $H_t$
is called {\bf strict} if $h_t=0$ for all $t\in[0,1]$,
or equivalently if
\[
\rmd H_t(R_{\alpha})=0
\]
for all $t\in[0,1]$.


\subsection{Contactified}
\label{subsec:contactified}

We continue the discussions from Section \ref{subsec:groupstructure}
for $(M,\alpha)$ equal to the contactisation
$(\R\times V,\rmd b+\lambda)$.
A $b$-parametric description
of the contact-Hamiltonian vector field of $H_t$
can be obtained as follows:
Observe that
$R_{\rmd b+\lambda}=\partial_b$
and decompose
$Y_t=y_t\partial_b+X_{H_t}$
w.r.t.\ the splitting $T\R\times TV$.
The first and second Hamiltonian equations then become
\[
y_t=-\lambda(X_{H_t})
\quad\text{and}\quad
\iota_{X_{H_t}}\rmd\lambda
=
(\partial_bH_t)\,\lambda-\rmd_VH_t
\,,
\]
resp., where $\rmd_V$ denotes the restriction
of the differential to $TV$.
Hence,
\[
X_{H_t}^{\rmd b+\lambda}
=
\big(H_t-\lambda(X_{H_t})\big)\partial_b+X_{H_t}
\,.
\]
Notice that $\Phi_t^{H_t}$ is strict
if and only if $H_t\equiv H_t(v)$
is independent of $b\in\R$.

Furthermore,
with Remark \ref{rem:freeactionstrnex},
$\Phi_t$ is an isotopy of strict contactomorphisms
of $(\R\times V,\rmd b+\lambda)$
if and only if
there exists a smooth family of smooth functions $F_t$ on $V$
with $F_0=0$
and a symplectic isotopy $\varphi_t$ of $(V,\rmd\lambda)$
such that
$\Phi_t=\big(b-F_t,\varphi_t\big)$ and
$\varphi_t^*\lambda=\lambda+\rmd F_t$.
Hence, in either case,
the generating vector field $X_t$ of $\Phi_t$
defines a contact-Hamiltonian function $H_t$
uniquely via the first Hamiltonian equation.
As $H_t$ must be $b$-independent,
integration with the splitting description
of $X_t=X_{H_t}^{\rmd b+\lambda}$ above yields
\[
F_t=\int_0^t
  \varphi_s^*
  \Big(
    \lambda\big(X_{H_s}\big)-H_s
  \Big)
\rmd s
\,,
\]
where $F_t(v)$, $v\in V$, is the {\bf perturbed action}
of the path $[0,t]\ni s\mapsto\varphi_s(v)$,
and
$\varphi_t$ is the Hamiltonian isotopy of $(V,\rmd\lambda)$
generated via $\dot\varphi_t=X_{H_t}|_{\varphi_t}$
by the {\bf Hamiltonian vector field} $X_{H_t}$
defined by $\iota_{X_{H_t}}\rmd\lambda=-\rmd H_t$.

Observe that
starting with a Hamiltonian isotopy
$\varphi_t$ of $(V,\rmd\lambda)$ generated by $H_t$,
taking the Lie derivative of $\lambda$
in the direction of $X_{H_t}$ yields
$\varphi_t^*\lambda=\lambda+\rmd F_t$,
where $F_t$ is the perturbed action,
see \cite[Proposition~9.3.1]{mcdsal17}.
Hence,
different choices of Hamiltonian functions $H_t$ for $\varphi_t$
result in different strict contact-Hamiltonian isotopies $\Phi_t^{H_t}$.


\subsection{Hamiltonian extension of the inverse}
\label{subsec:hamexofinv}

Let $(M,\alpha)$ be a vertically convex
strict contact manifold
shaped by $(S,V_S,f_{\pm})$
in $(\R\times V,\rmd b+\lambda)$.
Consider the global Darboux chart
\[
 \Phi_{\alpha}\co
 \big(\R\times V,D_{T_{\alpha}},\rmd b+\lambda\big)
 \lra
 (\hat{M},M,\hat{\alpha})
\]
from Proposition \ref{prop:globalflowbox} (c)
and Section \ref{subsec:globaldarbouxchart},
which equals $(b+f_{\alpha},\varphi_{\alpha})$
when restricted to a neighbourhood of $Z_{\alpha}$.

Assume that the monodromy $\varphi_{\alpha}$
of $(M,\alpha)$
is a compactly supported
Hamiltonian diffeomorphism
of $(V_S,\rmd\lambda)$.
With \cite[Definition~3.1.13]{mcdsal17}
there exist a smooth time-dependent Hamiltonian
$H_t$ on $(V,\rmd\lambda)$
and a compact set $K\subset V_S$
such that $\supp(H_t)\subset K$
for all $t\in[0,1]$.
Moreover,
$\varphi_{\alpha}=\varphi_1$ is the time-$1$ map
of the Hamiltonian isotopy $\varphi_t$
of $(V_S,\rmd\lambda)$
generated by the Hamiltonian vector field $X_{H_t}$.
By the formula for
the perturbed action $F_t$ of $\varphi_t$
in Section \ref{subsec:contactified},
$F_t$ is a smooth family of functions on $V$
with common compact support in $V_S$.
This results in a {\bf strict contact-Hamiltonian isotopy}
\[
\Phi_t^{H_t}=\big(b-F_t,\varphi_t\big)
\]
of $(\R\times V_S,\rmd b+\lambda)$,
whose time-$1$ map equals
$(b+f_{\alpha},\varphi_{\alpha})$,
so that $F_1=-f_{\alpha}$ holds ({\it sic!}).
A strict contact-Hamiltonian isotopy
$\Phi_t^{H_t}$ of $(\R\times V_S,\rmd b+\lambda)$
is called a {\bf Hamiltonian monodromy isotopy}
provided that $(\varphi_t,-F_t)\in\MM(S)$ for all $t\in[0,1]$,
where $\Phi_t^{H_t}=\big(b-F_t,\varphi_t\big)$.

\begin{lem}[Hamiltonian extension]
 \label{lem:contifnotrappedorbitshamiltonianmonod}
 Let $\Phi_t^{H_t}=\big(b-F_t,\varphi_t\big)$
 be a Hamiltonian monodromy isotopy as above.
 Let $\chi\co\R\ra[0,1]$ be a smooth cut-off function
 that is $0$ near $(-\infty,-1]$ and $1$ near $[0,\infty)$.
 
 Then these data determine
 a contact-Hamiltonian isotopy $\Psi_t$ of
 $\big(\R\times V,\xi_{\lambda}\big)$
 w.r.t.\ $\rmd b+\lambda$
 such that the following holds:
 The time-$1$ map
 \[
   \Psi_{(H_t,\chi)}:=\Psi_1\co
   \big(\R\times V,D_{T_{\alpha}},\rmd b+\lambda\big)
   \lra
   \big(\R\times V,D_S,\rme^h(\rmd b+\lambda)\big)
 \]
 is a strict contactomorphism
 such that $\Psi_{(H_t,\chi)}=\Phi_{\alpha}=(b+f_{\alpha},\varphi_{\alpha})$
 in a neighbourhood of $Z_{\alpha}$
 and $\Psi_{(H_t,\chi)}=\id$ in a neighbourhood of
 $\End(D_{T_{\alpha}})\setminus Z_{\alpha}$.
 Here $h\equiv h_{(H_t,\chi)}$
 is a uniquely determined smooth function on $\R\times V$ with support in $D_S$.
\end{lem}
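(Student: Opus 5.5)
The plan is to obtain $\Psi_t$ by cutting off the $b$-independent contact Hamiltonian $H_t$ of the strict isotopy $\Phi_t:=\Phi_t^{H_t}=(b-F_t,\varphi_t)$ in the vertical direction. The cut-off is placed so that $H_t$ is kept in a neighbourhood of a moving copy of the exit set and replaced by $0$ near the entrance set. The correction $h$ then comes for free from the contact-Hamiltonian formalism of Section~\ref{subsec:groupstructure}.

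\textbf{Step 1: the moving hypersurface.} Set $\Sigma_t:=\Phi_t\circ\Phi_1^{-1}(S^+)$. Since $\Phi_1=(b+f_\alpha,\varphi_\alpha)$ maps the graph of $g_+^\alpha$ onto $S^+$, the set $\Sigma_t$ is the image under $\Phi_t$ of the graph of $g_+^\alpha$. Because $\Phi_t$ is a strict contactomorphism commuting with $\partial_b$, $\Sigma_t$ is again a graph over $V_S$, say of $\sigma_t$, with
\[
\sigma_1=f_+ \quad\text{and}\quad \sigma_0=g_+^\alpha .
\]
I expect that the hypothesis $(\varphi_t,-F_t)\in\MM(S)$ for all $t$ is exactly what gives $\sigma_t>f_-$ on $V_S$; this should follow by rewriting the shape inequality for $(\varphi_t,-F_t)$ through the composite $\Phi_t\Phi_1^{-1}$. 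Moreover, $\sigma_t=f_+=f_-$ outside the compact set $K\supset\supp H_t$. On $K$ the gap $\sigma_t-f_-$ therefore has a positive minimum $2\delta$, uniformly in $t\in[0,1]$ by compactness.

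\textbf{Step 2: the cut-off Hamiltonian.} Define
\[
K_t(b,v):=\chi\big((b-\sigma_t(v))/\delta\big)\,H_t(v).
\]
Then $K_t=H_t$ on the region $\{b\geq\sigma_t-\text{small}\}$ and $K_t=0$ on $\{b\leq\sigma_t-\delta\}$, which contains a neighbourhood of $S^-$ over $K$. Outside $\R\times K$ we have $K_t=0$. Let $\Psi_t$ be the contact-Hamiltonian isotopy of $K_t$ with respect to $\rmd b+\lambda$. Write $\Psi_t^*(\rmd b+\lambda)=\rme^{k_t}(\rmd b+\lambda)$ and set $h:=-k_1\circ\Psi_1^{-1}$. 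Then $\Psi_1$ is strict from $\rmd b+\lambda$ to $\rme^{h}(\rmd b+\lambda)$, and $h$ is uniquely determined by $(H_t,\chi)$. The function $k_t$ is $\int_0^t\Psi_s^*(\partial_bK_s)\,\rmd s$, so it vanishes wherever orbits only see $\partial_bK=0$. This places $\supp h$ inside the images of the cut-off transition layers, which lie between $S^-$ and $S^+$, i.e.\ in $D_S$.

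\textbf{Step 3: identification of $\Psi_1$ near the ends.} The closed region $R_t:=\Phi_t\Phi_1^{-1}(\{b\geq f_+\})$, enlarged slightly, lies in $\{K_t=H_t\}$. It is carried along by the flow of $X_{H_t}^{\rmd b+\lambda}$, which agrees there with the flow of $X^{\rmd b+\lambda}_{K_t}$. By uniqueness of flow lines, $\Psi_t=\Phi_t$ on a neighbourhood of $R_0\supset Z_\alpha$. In particular $\Psi_1=(b+f_\alpha,\varphi_\alpha)=\Phi_\alpha$ near $Z_\alpha$. Likewise, points below $S^-$ or over $V\setminus K$ never leave the zero set of $K_t$, so $\Psi_1=\id$ near $\End(D_{T_\alpha})\setminus Z_\alpha$. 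Since $\Psi_1$ is a diffeomorphism sending $T_\alpha^-=S^-$ to $S^-$ and $T_\alpha^+$ to $S^+$, it maps $D_{T_\alpha}$ onto $D_S$.

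\textbf{Main obstacle.} The delicate point is Step 1 together with the invariance argument in Step 3. One must verify that the monodromy condition really yields $\sigma_t>f_-$. One must also check that the zero region and the region $\{K_t=H_t\}$ are each invariant under their respective flows; in particular, no orbit of $X_{K_t}$ starting below the transition layer may be pushed into it from below. For the latter I would first try to show that $\partial_b$-translation invariance of $H_t$ and the explicit $\partial_b$-component $H_t-\lambda(X_{H_t})$ keep $\Psi_t$ a graph-preserving map over $V$. The vertical transition layer then moves with $\Sigma_t$, and only its $b$-thickness is distorted, which stays below the gap $2\delta$ after shrinking $\delta$ if necessary.
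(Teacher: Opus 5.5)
\textbf{Gap in Step 1.} The inequality $\sigma_t>f_-$ you flagged is not merely unverified: it is false in general. From your formula for $\Phi_t\Phi_1^{-1}$ one gets $\sigma_t\circ\varphi_t=\varphi_1^*f_++F_1-F_t$. So $\sigma_t>f_-$ means $\varphi_1^*f_++F_1-F_t>\varphi_t^*f_-$, i.e.\ $\Phi_t$ keeps $T_\alpha^+$ above $S^-$. The hypothesis $(\varphi_t,-F_t)\in\MM(S)$ says something different: $\varphi_t^*f_++F_t>f_-$, i.e.\ $\Phi_t^{-1}(S^+)$ stays above $S^-$.

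\textbf{Counterexample.} Take the potential well loop $\Lambda^{P_0,a_0}_{\tau(t),\sigma(t)}$ of Section~\ref{subsec:changingtheaction} with $P_0>\max(f_+-f_-)$. The two estimates in the proof of Lemma~\ref{lem:pi1ofrelemistrivial} show it is a Hamiltonian monodromy isotopy, and its time-$1$ map is $\id$. Where $\chi_V=-1$ it equals $(b-\tau(t)P_0,\id)$ for $t\le 1/4$. Hence $\Sigma_t=\Phi_t(S^+)$ is the graph of $f_+-\tau(t)P_0$ there, and it sinks below $S^-$ inside the support of the Hamiltonian $\tau'(t)P$. So no $\delta>0$ exists.

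\textbf{Step 3 cannot be repaired with a fixed region.} At $t=1/4$ your two invariants are incompatible. If $\Phi_t(x)=y$ with $x\in Z_\alpha$ and $y\in\{b\le f_-\}$, then a diffeomorphism equal to $\Phi_t$ near $Z_\alpha$ and to $\id$ near $\{b\le f_-\}$ would send both $x$ and $y$ to $y$. So keeping the agreement region fixed at $Z_\alpha$ for all $t$ cannot work for general Hamiltonian monodromy isotopies. Those containing such wells are exactly the ones the lemma is used for in the proof of Theorem~\ref{thm:monodromy}. (A minor slip: on $V_S$ outside $K$ one has $\sigma_t=f_+>f_-$, not $f_+=f_-$.)

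\textbf{The missing idea.} Let the agreement region move in the source. Set $Z_t:=\{b\ge g_+^t\}$, where $g_+^t=\varphi_t^*f_++F_t$ is the graph function of $T_t^+=\Phi_t^{-1}(S^+)$. Then $\Phi_t(Z_t)=\{b\ge f_+\}$ is fixed, the required separation $g_+^t>f_-$ is literally the monodromy hypothesis, and $Z_1=Z_\alpha$.

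\textbf{How the paper realises this.} It passes to the inverse isotopy:
\begin{itemize}
\item cut off the Hamiltonian $-\Phi_t^*H_t$ of $\Phi_t^{-1}$ by $\chi\big((b-g_+^t)/(g_+^t-f_-)\big)$;
\item let $\tilde\Psi_t$ be the generated contact-Hamiltonian isotopy; by your own flow-line uniqueness argument it equals $\Phi_t^{-1}$ near $\{b\ge f_+\}$ and $\id$ near $\{b\le f_-\}$;
\item set $\Psi_t:=\tilde\Psi_t^{-1}$, so that $\Psi_t=\Phi_t$ near $Z_t$ and $\Psi_t=\id$ near $\End(D_{T_t})\setminus Z_t$.
\end{itemize}
Normalising the cut-off by the gap $g_+^t-f_-$ also removes the need for a uniform $\delta$. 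Your treatment of the conformal factor $h$ then goes through unchanged.
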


\begin{proof}
Consider the inverse
$\Phi_t^{-1}=\big(b+{\varphi_t}_*F_t,\varphi_t^{-1}\big)$
of $\Phi_t=\Phi_t^{H_t}$,
which is a strict contact-Hamiltonian isotopy
generated by the Hamiltonian $-\Phi_t^*H_t$.
We define a smooth family of
shapes $(T_t,V_{T_t},g_{\pm}^t)$
in $(\R\times V,\rmd b+\lambda)$
given by $V_{T_t}=V_S$, $g_-^t=f_-$
and $g_+^t=\varphi_t^*f_++F_t$.
Observe that for each $t\in[0,1]$
the hypersurface
$T^+_t=\Phi_t^{-1}(S^+)$
is the graph of $g_+^t$
and that $g_-^t<g_+^t$ by assumption,
cf.\ Section \ref{subsec:monodromyspace}.
The path $g_+^t$ connects
$g_+^0=f_+$ with $g_+^1=g_+^{\alpha}$
and
$(T_t,V_{T_t},g_{\pm}^t)$ is a shape
equivalent to $(S,V_S,f_{\pm})$.

For all $t\in[0,1]$
and $(b,v)\in\R\times V_S$, define
\[
\tilde{\chi}_t(b,v):=
\chi\left(\frac{b-g_+^t(v)}{g_+^t(v)-f_-(v)}\right)
\,.
\]
As the support of $-\Phi_t^*H_t$
is contained in $V_S$ for all $t\in[0,1]$,
we obtain a smooth family
of contact-Hamiltonians $K_t:=-\tilde{\chi}_t\Phi_t^*H_t$
on $\big(\R\times V,\rmd b+\lambda\big)$.
Denote by $\tilde{\Psi}_t$ the generated
contact-Hamiltonian isotopy,
so that $\Psi_t:=\tilde{\Psi}_t^{-1}$
is the contact-Hamiltonian isotopy generated by
\[
 \tilde{H}_t:=
 -\rme^{-\int_0^t\tilde{\Psi}_s^*(K_s)_b\,\rmd s}\;
 \tilde{\Psi}_t^*K_t
 \,.
\]
Abbreviating $Z_t:=\{b\geq g_+^t\}$
we get that $\tilde{H}_t$ equals $H_t$ in a neighbourhood of $Z_t$
and $0$ in a neighbourhood of $\End(D_{T_t})\setminus Z_t$.
Hence,
$\Psi_t=\Phi_t$ in a neighbourhood of $Z_t$
and $\Psi_t=\id$ in a neighbourhood of $\End(D_{T_t})\setminus Z_t$.
On $D_{T_t}$ we have
\[
\Psi_t^*(\rmd b+\lambda)=
\rme^{\int_0^t\Psi_s^*(\tilde{H}_s)_b\,\rmd s}
(\rmd b+\lambda)
\,.
\]
Setting
\[
 h_t:=
 -{\Psi_t}_*
 \int_0^t\Psi_s^*(\tilde{H}_s)_b\,\rmd s
 \,,
\]
we get a smooth family of compactly supported
functions $h_t$ on $\Int(D_S)$ such that
\[
 \Psi_t\co
 \big(\R\times V,D_{T_t},\rmd b+\lambda\big)
 \lra
 \big(\R\times V,D_S,\rme^{h_t}(\rmd b+\lambda)\big)
 \,.
\]
The claim follows by setting $\Psi_{(H_t,\chi)}:=\Psi_1$ and
$h_{(H_t,\chi)}:=h_1$.
\end{proof}

Observe that the contact-Hamiltonian isotopy $\Psi_t$
depends continuously on the path
$\Phi_t^{H_t}=\big(b-F_t,\varphi_t\big)$ in $\MM(S)$
and is, by Remark \ref{rem:freeactionstrnex},
the global Darboux chart
$\Psi_t=\Phi_{\rme^{h_t}(\rmd b+\lambda)}$ 
of $\big(D_S,\rme^{h_t}(\rmd b+\lambda)\big)$
for each $t\in[0,1]$.
In particular, at time $t=1$:
\[
\Psi_{(H_t,\chi)}=\Phi_{\rme^h(\rmd b+\lambda)}
\,,
\]
cf.\ Section \ref{subsec:utransform} below.
Replacing the generating Hamiltonian $H_t$
by $K_t$ inside $\MM(S)$ yields
an analogous contactomorphism
$\Psi_{(K_t,\chi)}=\Phi_{\rme^k(\rmd b+\lambda)}$
with $k\equiv k_{(K_t,\chi)}$ as in Lemma
\ref{lem:contifnotrappedorbitshamiltonianmonod}.
The two contactomorphisms
differ by the contactomorphism
\[
  \Psi_{(H_t,\chi)}\circ(\Psi_{(K_t,\chi)})^{-1}\co
  \big(\R\times V,D_S,\rme^k(\rmd b+\lambda)\big)
  \lra
  \big(\R\times V,D_S,\rme^h(\rmd b+\lambda)\big)
  \,.
\]
The action domains $D_{T_t}$ corresponding to $H_t$ and $K_t$
of the respective contact-Hamiltonian isotopies $\Psi_t$
used in the proof of Lemma
\ref{lem:contifnotrappedorbitshamiltonianmonod}
coincide for all $t\in[0,1]$ if and only if
$H_t$ and $K_t$ define the same path in $\MM(S)$,
cf.\ Section \ref{subsec:verticalcategory}.

\begin{rem}[Contactomorphic]
\label{rem:contifnotrappedorbitshamiltonianmonod}
Under the assumptions of Lemma
\ref{lem:contifnotrappedorbitshamiltonianmonod}
the composition
\[
 \Phi_{\rme^h(\rmd b+\lambda)}\circ\Phi_{\alpha}^{-1}\co
 (\hat{M},M,\hat{\alpha})
 \lra
 \big(\R\times V,D_S,\rme^h(\rmd b+\lambda)\big)
\]
is a strict contactomorphism factorising the gluing maps,
cf.\ Section \ref{subsec:verticalcategory}.
Indeed, the composition is the identity in a neighbourhood of
$\big(\!\End(D_S),\rmd b+\lambda\big)$ and
defines a strict contactomorphism
$(M,\alpha)\ra\big(D_S,\rme^h(\rmd b+\lambda)\big)$ that
restricts to the gluing map
$\psi$ of $(M,\alpha)$ in a neighbourhood of $\partial M$.
Hence,
$[M,\alpha]=\big[D_S,\rme^h(\rmd b+\lambda)\big]$
in $\VV(S)$ provided that the monodromy of $[M,\alpha]$
fits into a Hamiltonian monodromy isotopy
as ensured by Lemma \ref{lem:pi1ofrelemistrivial} below,
see Section \ref{subsec:utransform}.
\end{rem}


\subsection{Changing the action}
\label{subsec:changingtheaction}

Consider a contact-Hamiltonian isotopy $\Phi_t=\Phi_t^{H_t}$
on a strict contact manifold $(M,\alpha)$.
Let $\tau\co[0,1]\ra[0,1]$ be a smooth map
constant near the boundary
with $\tau(0)=0$ and $\tau(1)=1$.
The substitution $\Psi_t=\Phi_{\tau(t)}$ turns $\Phi_t$ into a
{\bf technical isotopy}.
Observe that $\Phi_{(1-s)t+s\tau(t)}$, $s\in[0,1]$,
is a homotopy relative to the endpoints
between the original and the technical isotopy.
Taking the time derivative yields that
$\Psi_t$ is generated by $\tau'(t)X^{\alpha}_{H_{\tau(t)}}$,
so that the generating Hamiltonian is equal to $\tau'(t)H_{\tau(t)}$.
Observe that $\Phi_{\tau(t)}^*\alpha=\rme^{h_{\tau(t)}}\alpha$
for all $t\in[0,1]$.
In particular, $\Psi_t$ is strict if and only if $\Phi_t$ is.
If $\sigma$ is another such map and
$\tau'$ and $\sigma'$ have disjoint supports, then
$\Phi_{\tau(t)}\circ\Phi_{\sigma(t)}^{-1}$
is a contractible loop of contact-Hamiltonian diffeomorphisms.
Assume, in addition,  that
$\Phi_t=(b-F_t,\varphi_t)$
is strict on $(\R\times V,\rmd b+\lambda)$,
so that $H_t$ is $b$-independent.
With
$\Phi_{\tau(t)}=\big(b-F_{\tau(t)},\,\varphi_{\tau(t)}\big)$,
so that
\[
\Phi_{\sigma(t)}^{-1}=
\Big(
b+(\varphi_{\sigma(t)})_*F_{\sigma(t)},\,
\varphi_{\sigma(t)}^{-1}
\Big)
\,,
\]
the loop reads as
\[
\Phi_{\tau(t)}\circ\Phi_{\sigma(t)}^{-1}=
\Big(
  b-(\varphi_{\sigma(t)})_*
    \big(
      F_{\tau(t)}-F_{\sigma(t)}
    \big),\,
  \varphi_{\tau(t)}\circ\varphi_{\sigma(t)}^{-1}
\Big)
\,.
\]

To simplify the action coordinate,
assume that the closure of the shadow set $(V_S,\lambda)$
of a shape $(S,V_S,f_{\pm})$ is a Liouville domain
and that $f_{\pm}|_{\partial V_S}=0$.
We will call such a shape a {\bf Liouville shape}.
The smooth boundary $\partial V_S$ is a strict contact manifold
equipped with the contact form $\alpha:=\lambda|_{T\partial V_S}$.
Choose a symplectisation collar neighbourhood
$\big((-a_S,0]\times\partial V_S,\rme^a\alpha\big)$
for some positive $a_S\in\R$
such that
\[
f_-<0<f_+
\quad\text{holds near}\quad
[-a_S,0)\times\partial V_S
\,.
\]
Furthermore,
in order to define a {\bf potential well},
choose a smooth function
\[
\chi_V\co V\ra[-1,0]
\]
that equals $-1$ on the complement of the collar in $V_S$
and $0$ on the complement of $V_S$.
On the collar, assume that
$\chi_V\equiv\chi_V(a)$ is monotonically increasing,
identically equal to $-1$ near $[-a_S,-a_0]$,
and identically equal to $0$ near $[-a_0/2,0]$.
The autonomous Hamiltonian defining the potential well
is $P:=P_0\cdot\chi_V$ for $P_0\geq0$.
The corresponding Hamiltonian vector field equals
$X_P=\rme^{-a}P'(a)R_{\alpha}$ on the collar neighbourhood
and equals $0$ otherwise.
As $\rme^a\alpha(X_P)=P'(a)$ and
$X_P$ preserves $P$ and $P'$,
the action is equal to $F_t=t(P'-P)$
and, hence, non-negative.
Therefore, we get
\[
\Lambda_{\tau(t),\sigma(t)}^{P_0,a_0}:=
\Phi_{\tau(t)}\circ\Phi_{\sigma(t)}^{-1}=
\Big(
  b-\big(\tau(t)-\sigma(t)\big)(P'-P),\,
  \varphi_{\tau(t)-\sigma(t)}^{X_P}
\Big)
\]
for the {\bf potential well loop}.

\begin{lem}[Hamiltonian monodromy isotopy]
\label{lem:pi1ofrelemistrivial}
Let $(S,V_S,f_{\pm})$ be a Liouville shape
in $(\R\times V,\rmd b+\lambda)$.
Consider a strict contact-Hamiltonian isotopy $\Phi_t=\Phi_t^{H_t}$
of the form $\Phi_t=(b-F_t,\varphi_t)$ in $\EE$
with endpoints of the path $\Phi_t$ in $\MM(S)$.

Let $a_0\equiv a_0(H_t)>0$ be so small
that the union of the supports of $H_t$ is disjoint from
the symplectic collar
$\big([-a_0,0]\times\partial V_S,\rme^a\alpha\big)$.
Define
\[
\Delta(H_t)_t:=f_--\varphi_t^*f_+-F_t
\]
and set
\[
P_0\equiv P_0(H_t):=
\max\big(\Delta(H_t)_t\big)+
\max\big(|H_t|\big)
\,,
\]
where both maxima are taken over all $t\in[0,1]$
and the closure of $V_S$.
Choose a smooth function $\tau\co[0,1/4]\ra[0,1]$ 
constant near the boundary
with $\tau(0)=0$ and $\tau(1/4)=1$.
Extend $\tau$ constantly to $\R$ and define
$\varrho(t):=\tau(t-1/4)$ and $\sigma(t):=\tau(t-1/2)$.

Then
\[
\Phi_{\varrho(t)}\circ\Lambda_{\tau(t),\sigma(t)}^{P_0,a_0}
\]
is a path in $\MM(S)$ homotopic to $\Phi_t$
relative to the endpoints
and is constantly equal to $\id$
for the trivial path $\Phi_t=\id$.
\end{lem}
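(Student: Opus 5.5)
The plan is to verify the statement phase by phase, using the explicit formula for the potential well loop from Section~\ref{subsec:changingtheaction}. By construction $\tau$, $\varrho$, $\sigma$ have derivatives supported in $[0,1/4]$, $[1/4,1/2]$, $[1/2,3/4]$ respectively. Hence the path $\Phi_{\varrho(t)}\circ\Lambda_{\tau(t),\sigma(t)}^{P_0,a_0}$ falls into four phases:
\begin{itemize}
\item on $[0,1/4]$ it is $\Lambda_{\tau(t),0}$, which switches the well on;
\item on $[1/4,1/2]$ it is $\Phi_{\varrho(t)}\circ\Lambda_{1,0}$, which runs the given isotopy inside the well;
\item on $[1/2,3/4]$ it is $\Phi_1\circ\Lambda_{1,\sigma(t)}$, which switches the well off;
\item on $[3/4,1]$ it is constantly $\Phi_1$.
\end{itemize}
In each phase I write the composite as $(b-G,\psi)$ with $\psi$ a composition of $\varphi_{\varrho}$ and the flow $\varphi^{X_P}$. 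I then check the shape inequality $f_-<\psi^*f_++G$, which is the inequality defining $\MM(S)$ with action difference $-G$.

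The verification splits $V_S$ into two regions. The first is the collar part $\{a\ge -a_0\}$. There $H_t$, $F_t$ vanish and $\varphi_t=\id$, while $\varphi^{X_P}$ moves points only along Reeb directions $R_\alpha$ within the levels $\{a\}\times\partial V_S$. The second is its complement, where $\chi_V\equiv-1$, so $X_P=0$ and $P'-P=P_0$.

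On the collar the action term equals $s(P'-P)\ge0$, with $s$ the current well strength. Moreover $\psi^*f_+>0>f_-$ there, because $\psi$ preserves the $a$-levels inside $[-a_S,0)\times\partial V_S$, where $f_-<0<f_+$ by the choice of collar. So the inequality holds on the collar in every phase.

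On the complement the composite is $\big(b-F_{\varrho}-sP_0,\varphi_{\varrho}\big)$ with $s\in[0,1]$. In the first phase the inequality reads $f_-<f_++\tau P_0$, which holds since $P_0\ge0$. In the third phase it reads $f_-<\varphi_1^*f_++F_1+(1-\sigma)P_0$, which follows from $\Phi_1$ lying in $\MM(S)$. In the second phase it reads $\Delta(H_t)_{\varrho}<P_0$. This follows from the definition of $P_0$ whenever $\max|H_t|>0$. If $H\equiv0$, then $\Delta=f_--f_+<0$ on $V_S$, and again the inequality holds. This second-phase estimate is the heart of the proof and the main point needing care. One must also check that the composition order does not spoil it: since $\varphi^{X_P}=\id$ off the collar and $\varphi_\varrho=\id$ on it, the two pieces commute.

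For the homotopy relative to endpoints, I replace $P_0$ by $sP_0$, $s\in[0,1]$. Each $\Lambda^{sP_0,a_0}_{\tau,\sigma}$ is a loop of strict contactomorphisms in $\EE$ based at $\id$, since $\tau'$ and $\sigma'$ have disjoint supports. So this gives a homotopy in $\EE$ rel endpoints from our path to the technical isotopy $\Phi_{\varrho(t)}$. That isotopy is homotopic to $\Phi_t$ rel endpoints via $\Phi_{(1-s)t+s\varrho(t)}$. The paths are therefore homotopic relative to the endpoints.

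Finally, for the trivial path $H_t=0$ we get $P_0=\max_{\bar V_S}(f_--f_+)=0$, because $f_-=f_+$ on $\partial V_S$ and $f_-<f_+$ inside. Then $P\equiv0$, so $\Lambda=\id$ and $\Phi_\varrho=\id$, and the path is constantly $\id$.
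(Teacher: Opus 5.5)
Your proposal is correct and follows essentially the paper's route: you split $V_S$ into the collar $[-a_0,0)\times\partial V_S$, where only the level-preserving well flow acts and $f_-<0<f_+$, and its complement, where you check $f_-<\varphi_{\varrho}^*f_++F_{\varrho}+(\tau-\sigma)P_0$ phase by phase using $P_0>\Delta(H_t)_t$, and you obtain the homotopy rel endpoints by contracting the potential well loop and then reparametrising, as in the remarks preceding the lemma. The one point you use without justification is $P_0\geq0$, which you need in the first phase and for $(\tau-\sigma)(P'-P)\geq0$ on the collar; it holds, as in the paper, because $\Delta(H_t)_t$ vanishes along $\partial V_S$ (there $f_\pm=0$, and $(\varphi_t,-F_t)$ is compactly supported).
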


\begin{proof}
  Denote by $\Psi_t=(b-G_t,\psi_t)$ the isotopy in question.
  The aim is to verify the shape inequality
  $f_-<\psi_t^*f_++G_t$ for all $t\in[0,1]$.
  Since $X_P$ is supported in $[-a_0,0]\times\partial V_S$,
  where $\varphi_t=\id$ and $F_t=0$ for all $t\in[0,1]$,
  we obtain
  \[
    \Psi_t=
    \Big(
      b-\big(\tau(t)-\sigma(t)\big)(P'-P)-F_{\varrho(t)},\,
      \varphi_{\varrho(t)}\circ\varphi_{\tau(t)-\sigma(t)}^{X_P}
    \Big)
    \,.
  \]
  Outside this collar, this reads as
  \[
  \Big(b-\big(\tau(t)-\sigma(t)\big)P_0-F_{\varrho(t)},\,
  \varphi_{\varrho(t)}\Big)
  \]
  and on the collar as
  \[
  \Big(b-\big(\tau(t)-\sigma(t)\big)(P'-P),\,
  \varphi_{\tau(t)-\sigma(t)}^{X_P}\Big)
  \,.
  \]
  Therefore,
  we have to verify the shape inequality in the two forms
  \[
  f_-<
  \varphi_{\varrho(t)}^*f_++F_{\varrho(t)}
  +\big(\tau(t)-\sigma(t)\big)P_0
  \]
  and
  \[
  f_-<
  \big(\varphi_{\tau(t)-\sigma(t)}^{X_P}\big)^*f_+
  +\big(\tau(t)-\sigma(t)\big)(P'-P)
  \]
  on the respective domains.
  
  The right-hand side of the first variant takes,
  on the consecutive time intervals
  $[0,1/4]$, $[1/4,1/2]$ and $[1/2,1]$,
  the form
  \[
  f_++\tau(t)P_0
  \,,
  \quad
  \varphi_{\varrho(t)}^*f_++F_{\varrho(t)}+P_0
  \quad\text{and}\quad
  \varphi_1^*f_++F_1
  +\big(1-\sigma(t)\big)P_0
  \,.
  \]
  For every $t\in[0,1]$,
  the function $\Delta(H_t)_t$ vanishes along $\partial V_S$,
  because $f_-=0=f_+$ there and
  $(\varphi_t,-F_t)$ is compactly supported in $V_S$.
  Hence,
  the maximum occurring in the definition of $P_0$
  is non-negative, i.e.\ $P_0(H_t)\geq0$.
  Furthermore,
  for all $t\in[0,1]$,
  we have $P_0(H_t)>\Delta(H_t)_t$ 
  provided $H_t$ is not identically $0$;
  otherwise $P_0(0)=0$,
  in which case the shape inequality for $\Phi_t=\id$
  is satisfied trivially.
  Hence,
  the above right-hand sides are estimated
  from below as
  \[
  \geq f_+>f_-
  \,,
  \quad
  >\varphi_{\varrho(t)}^*f_++F_{\varrho(t)}+\Delta(H_t)_{\varrho(t)}
  =f_-
  \quad\text{and}\quad
  \geq\varphi_1^*f_++F_1>f_-
  \,,
  \]
  resp., as claimed.
  
  Because $\big(\tau(t)-\sigma(t)\big)(P'-P)\geq0$,
  the second variant of the shape inequality
  can be reduced to
  \[
  f_-<
  \big(\varphi_{\tau(t)-\sigma(t)}^{X_P}\big)^*f_+
  \,.
  \]
  By the choice of $a_0$ we have
  $f_-<0<f_+$ on $(-a_0,0)\times\partial V_S$
  and the claim follows.
\end{proof}

\begin{proof}[{\bf Proof of Theorem \ref{thm:monodromy}}]
Lemma \ref{lem:pi1ofrelemistrivial} provides
the existence of a Hamiltonian monodromy isotopy
required in Remark \ref{rem:contifnotrappedorbitshamiltonianmonod}.
\end{proof}


\subsection{Alexander dilation\label{subsec:alexdilation}}
The following construction is inspired by
\cite[Section 2.6.2]{gei08}.
Let $(S,V_S,f_{\pm})$ be a Liouville shape
in $(\R\times V,\rmd b+\lambda)$.
The Liouville vector field $Y$ of $(V,\lambda)$
is defined via $\iota_Y\rmd\lambda=\lambda$.
There exists $\varepsilon_S>0$ such that
$(-\infty,\varepsilon_S]\times V_S$
is contained in the domain of the Liouville flow
$(t,v)\mapsto\varphi_t^Y(v)$.
Because
\[
(\varphi^Y_t)^*\lambda=\rme^t\lambda
\,,
\]
we get
$(\varphi^Y_t)^*\rmd\lambda=\rme^t\rmd\lambda$
whenever defined.
Hence, for all $\varphi\in\SS$
and $0\leq t<\varepsilon_S$,
the {\bf symplectic Alexander dilation}
\[
a_t(\varphi):=
\varphi^Y_{-t}\circ\varphi\circ\varphi^Y_t
\]
belongs to $\SS$.
Indeed, it is symplectic and
\[
\supp\big(a_t(\varphi)\big)
=
\varphi^Y_{-t}\big(\supp(\varphi)\big)
\,.
\]
Consider $\Phi=(b+f,\varphi)$ in $\EE$
and observe that
$\big(a_t(\varphi)\big)^*\lambda=\lambda-\rmd\beta_t(f)$,
where
\[
\beta_t(f):=\rme^{-t}f\circ\varphi^Y_t
\,.
\]
Since
\[
\supp\big(\beta_t(f)\big)=
\varphi^Y_{-t}\big(\supp(f)\big)
\,,
\]
the {\bf contact Alexander dilation}
\[
A_t(\Phi):=
\Big(b+\beta_t(f),a_t(\varphi)\Big)
\]
belongs to $\EE$
for all $0\leq t<\varepsilon_S$.
Notice that $A_t(\id)=\id$.

\begin{prop}[Homotopy equivalence]
\label{prop:homoequiv}
Let $(S,V_S,f_{\pm})$ be a Liouville shape
in $(\R\times V,\rmd b+\lambda)$.
Denote by $\EE_0$
the path component of $(\id_{V_S},0)$ in $\EE$
and set $\MM_0:=\MM(S)\cap\EE_0$.
Then the inclusion
\[
  \MM_0\lra\EE_0
\]
is a homotopy equivalence.
\end{prop}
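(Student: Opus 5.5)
The plan is to build a homotopy inverse $r\co\EE_0\ra\MM_0$ from two operations already at hand: the contact Alexander dilation $A_t$ of Section~\ref{subsec:alexdilation}, and a potential well of the kind used in Lemma~\ref{lem:pi1ofrelemistrivial}.

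\emph{Step 1: uniform supports.} Fix $0<t_0<\varepsilon_S$. For every $\Phi=(b+f,\varphi)\in\EE$, the dilation $A_{t_0}(\Phi)$ has support in $\varphi^Y_{-t_0}(V_S)$. This is a fixed compact subset of $V_S$, independent of $\Phi$, and it misses a collar $[-a_0,0]\times\partial V_S$ for some fixed $a_0>0$. The family $s\mapsto A_s$, $s\in[0,t_0]$, is continuous on $\EE$ in the strong $C^\infty$-topology, since the formulas are compositions with a fixed flow on $\bar V_S$. It fixes $\id$ and so preserves $\EE_0$. This step removes the defect of the strong topology noted in Section~\ref{subsec:functionspaces}, namely that supports may drift to $\partial V_S$.

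\emph{Step 2: a well of continuously varying depth.} For $\Psi=(b+g,\psi)$ supported away from the collar, set
\[
P_0(\Psi):=\max_{\bar V_S}\big(f_--\psi^*f_++g\big)^++1 .
\]
This depends continuously on $\Psi$, because strong convergence gives uniform convergence on $\bar V_S$. Let $W_{P_0}$ denote the time-$1$ map of the well Hamiltonian $P=P_0\chi_V$, using the fixed $a_0$. As in the proof of Lemma~\ref{lem:pi1ofrelemistrivial}, the composition $W_{P_0}\circ\Psi$ satisfies the shape inequality. Off the collar the action increases by $P_0$, which exceeds the defect. On the collar $\Psi=\id$, and $f_-<0<f_+$ holds there by the choice of the collar. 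Define
\[
r(\Phi):=W_{P_0(A_{t_0}\Phi)}\circ A_{t_0}(\Phi)\in\MM_0 .
\]
This lies in $\EE_0$ because $s\mapsto W_{sP_0}$ joins it to $A_{t_0}\Phi$.

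\emph{Step 3: the two homotopies.}
\begin{itemize}
\item $i\circ r\simeq\id_{\EE_0}$. First run $A_s$ for $s\in[0,t_0]$, then scale the depth $sP_0$ for $s\in[0,1]$. Both stages stay in $\EE_0$, and no shape inequality is required.
\item $r\circ i\simeq\id_{\MM_0}$. Here the homotopy must stay in $\MM_0$, and this is the \textbf{main obstacle}: the dilation $A_s$ need not preserve the shape inequality. My first attempt is to reverse the order of the two stages. Starting from $\Phi\in\MM_0$, first deepen a well. Then run the dilation while holding the well. Finally adjust the depth to $P_0(A_{t_0}\Phi)$.
\end{itemize}

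\emph{Keeping the second homotopy inside $\MM_0$.} The difficulty is that in the first stage $\Phi$ may have support reaching into the collar. I would handle this in one of two ways.
\begin{itemize}
\item Use the openness Lemma~\ref{lem:openshapeinequality} together with the local analysis at the equator from its proof. Near $\partial V_S$, the graph $f_+$ has steep negative normal derivative, while $\Phi$ is $C^1$-close to the identity there. Taking $a_0$ small enough, depending continuously on $\Phi$ (for instance via the distance from the locus where the shape-inequality margin is small), the collar twist should preserve the strict inequality.
\item Alternatively, show directly that $A_s$ preserves $\MM(S)$ after first adding a well of depth exceeding $\max(f_+-f_-)$. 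The dilated graph $\beta_s$ stays above $f_-$ in the interior because of the added constant. Near the boundary the element is the identity after dilation, where $f_-<f_+$ holds.
\end{itemize}
Whichever of these works, continuity of all choices in $\Phi$ follows from Remark~\ref{rem:freeactionstrnex} and from the continuity of maxima under uniform convergence. This then yields the homotopy equivalence.
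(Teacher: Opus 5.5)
You have a genuine gap, and it sits exactly at the point you flag. Your Steps 1 and 2 and the homotopy $i\circ r\simeq\id_{\EE_0}$ are fine. The trouble is the homotopy $r\circ i\simeq\id_{\MM_0}$ inside $\MM_0$, and neither of your proposed fixes closes it.

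The shape is only assumed Liouville, not sub- and super-homogeneous. So the dilation $A_s$ need not preserve $\MM(S)$ for any fixed $s>0$: Lemma~\ref{lem:alexshapeineq} needs homogeneity, and elements with a nearly tight shape inequality can be pushed out. On the non-compact space $\MM_0$ there is no uniform small dilation time. Nor can you choose the well width $a_0(\Phi)$ continuously so that the twisting region misses $\supp\Phi$, because supports are not continuous in the extrinsic strong topology (cf.\ the example of supports approaching $\partial V_S$).

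Your second fix therefore already fails at its first stage. Where $\Phi$ is non-trivial on the twisting region, the shape inequality for $W_{sQ}\circ\Phi$ involves $\varphi^*(w^*f_+)$, with $w$ the collar rotation. Nothing in your argument controls $f_+-f_+\circ w$ against the action gain there. Moreover, the depth has to dominate the action $f$, not merely $\max(f_+-f_-)$. Your first fix rests on two unproved claims: that $\Phi$ is $C^1$-close to the identity near $\partial V_S$, and that a deep well, whose twist is not $C^1$-small, ``should'' preserve the strict inequality.

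The missing idea is to let the \emph{dilation time}, rather than the well width, depend on $\Phi$.
\begin{enumerate}
\item The map $(\Phi,s)\mapsto A_s(\Phi)$ is continuous and $\MM(S)$ is open in $\EE$ (Lemma~\ref{lem:openshapeinequality}). Hence each $\Phi\in\MM_0$ has a neighbourhood $U$ and some $t>0$ with $A_s(U)\subset\MM(S)$ for $s<t$.
\item A partition of unity on the metrisable space $\MM_0$ then yields a continuous $\theta\co\MM_0\ra(0,t_0/2]$ with $A_s(\Phi)\in\MM(S)$ for all $s\le\theta(\Phi)$.
\item Dilate to time $\theta(\Phi)$. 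Then deepen a well of width $\theta(\Phi)$; this is harmless for every depth, since the well twists only where $A_{\theta(\Phi)}(\Phi)=\id$.
\item Since $A_u$ is conjugation by $(b,v)\mapsto(\rme^ub,\varphi^Y_u(v))$, you can keep dilating the composite up to time $t_0$. The dilated well still twists only along collar levels disjoint from the support, where $f_-<0<f_+$ for $t_0$ small. A large depth, chosen continuously in $\Phi$, dominates $\beta_u$ of the action.
\item Finally, interpolate linearly between the two radial wells, composed with $A_{t_0}(\Phi)$. The shape inequality is monotone in the depth, so it is preserved along the interpolation.
\end{enumerate}
With this repair your route gives an explicit homotopy inverse and avoids the ANR machinery.

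The paper instead sidesteps non-compactness altogether. It only kills $\pi_i(\EE_0,\MM_0)$ for compact families $\Gamma\co(D^i,S^{i-1})\ra(\EE_0,\MM_0)$, where compactness of $\Gamma(S^{i-1})$ plus openness gives one uniform dilation time. It then applies the potential-well homotopies of Lemma~\ref{lem:pi1ofrelemistrivial} to Hamiltonian isotopies from the identity. Finally, it upgrades the resulting weak equivalence to a homotopy equivalence via Dugundji, Hanner, Milnor and Whitehead.
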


\begin{proof}
It is enough to show that $\MM_0\hookrightarrow\EE_0$
is a weak homotopy equivalence.
By Section \ref{subsec:functionspaces},
the strong $C^{\infty}$-topologies on
$C_c^{\infty}(V_S)$ and
$C_c^{\infty}(V_S,TV_S)$
are induced by their respective countable families
of uniform $C^k$-seminorms on $\bar{V}_S$.
Thus, both spaces are metrisable locally convex
topological vector spaces.
With this topology,
a local addition on the ambient manifold $V$
gives $\DD(V_S)$ the structure
of a metrisable manifold modelled on
$C_c^{\infty}(V_S,TV_S)$.
Consequently, the space
\[
\FF=\DD(V_S)\times C_c^{\infty}(V_S)
\,,
\]
introduced in Remark \ref{rem:neighbourhoodretract},
is a metrisable manifold modelled on a metrisable
locally convex topological vector space.

The Dugundji extension theorem
\cite[Theorem~4.1]{du51}
states that every continuous map from a closed subset
of a metric space into a locally convex topological vector space
extends to the whole space.
Applying this result to the identity map on any closed copy
of the target space shows that every metrisable locally convex
topological vector space is an absolute retract
and hence, in particular, an absolute neighbourhood retract.
Together with Hanner's local characterisation
of the ANR property \cite[Theorem~3.3]{ha51},
this implies that $\FF$ is an ANR.
Hence, the open subset $\UU\subset\FF$ is also an ANR.
By Remark \ref{rem:neighbourhoodretract},
the space $\EE$ is a retract of $\UU$
and therefore is itself an ANR.

Since metrisable ANRs are locally path-connected,
the path component $\EE_0$ is open in $\EE$
and is itself an ANR.
By Lemma \ref{lem:openshapeinequality},
$\MM_0$ is open in $\EE_0$
and therefore is itself an ANR.
Every metrisable ANR has the homotopy type
of a CW complex \cite[Theorem~2]{mi59}.
Consequently, both $\MM_0$ and $\EE_0$
have the homotopy type of CW complexes.
Whitehead's theorem
\cite[Theorem~4.5; see also p.~352]{hat02}
now implies that a weak homotopy equivalence
$\MM_0\hookrightarrow\EE_0$
upgrades to a homotopy equivalence.

We now prove the required weak homotopy equivalence.
Represent an element of $\pi_i(\EE_0,\MM_0)$
by a continuous map
\[
\Gamma\co
(D^i,S^{i-1})\lra(\EE_0,\MM_0)
\,,
\quad
\Gamma(0)=(\id_{V_S},0)
\,.
\]
We first arrange uniform compact support.
Let $A_s$ be the contact Alexander dilation
introduced above.
Since $\Gamma(S^{i-1})$ is compact in $\MM(S)$,
openness of $\MM(S)$ and continuity of $A_s$
give a number $a\in(0,\varepsilon_S)$ such that
\[
A_s\big(\Gamma(S^{i-1})\big)\subset\MM(S)
\]
for all $s\in[0,a]$.
Thus, $A_s\circ\Gamma$ is a homotopy
of relative representatives.
Replacing $\Gamma$ by $A_a\circ\Gamma$,
we may therefore assume that all members
of the family are supported
in the interior of the fixed compact set
\[
V_a:=
\varphi^Y_{-a}\big(\bar{V}_S\big)
\,.
\]

We next replace $\Gamma$ by a representative
that is smooth in the finite-dimensional parameter.
The preceding local-addition argument,
applied with $\Int(V_a)$ in place of $V_S$,
shows that the subspace of $\FF$
consisting of pairs supported in $\Int(V_a)$
is a metrisable locally convex manifold.
Its intersection with the open neighbourhood
$\UU\subset\FF$
from Remark \ref{rem:neighbourhoodretract}
is open.
Regard $\Gamma$ as a map into this intersection.
By relative smooth approximation for maps
from finite-dimensional manifolds with corners
into locally convex manifolds,
cf.\ \cite[Theorem~11 and Proposition~13]{woc09},
the map $\Gamma$ can be approximated
within this intersection
by a smooth map $\Gamma'\co D^i\ra\UU$
through a homotopy fixing $0\in D^i$,
such that all members of the homotopy
are supported in $\Int(V_a)$.

Recall the strong deformation retract $K$ from
Remark \ref{rem:neighbourhoodretract}.
Since $K_1\co\UU\ra\EE$ is continuous,
$K_1|_{\EE}=\id_{\EE}$,
and $\MM(S)$ is open in $\EE$,
the approximation can be chosen such that,
after applying $K_1$,
the homotopy maps $S^{i-1}$ into $\MM(S)$.
Every map in the resulting homotopy
takes $0$ to $(\id_{V_S},0)$
and, since $D^i$ is connected,
takes values in $\EE_0$.
On $S^{i-1}$,
the resulting homotopy remains in $\MM(S)$
and starts at $\Gamma|_{S^{i-1}}$.
Its boundary values therefore remain in $\MM_0$.
Thus,
$K_1\circ\Gamma'$
represents the same element of
$\pi_i(\EE_0,\MM_0)$.
The parameter-dependent Moser construction
shows that $K_1\circ\Gamma'$ is smooth
in the parameter on $D^i$.
Moreover, the Moser retraction preserves support
in $\Int(V_a)$,
since its Moser vector field vanishes
outside $\Int(V_a)$.
For ease of notation,
we continue to denote $K_1\circ\Gamma'$ by $\Gamma$.

Write points of $D^i$ in the form $ru$
with $r\in[0,1]$ and $u\in S^{i-1}$,
and write
\[
\Gamma(ru)=(\varphi_{r,u},f_{r,u})
\,.
\]
For every $u$,
the path $r\mapsto\varphi_{r,u}$
is a smooth path of compactly supported
exact symplectomorphisms.
It is therefore a Hamiltonian isotopy by
\cite[Proposition~9.3.1 and
Corollary~9.3.3, pp.~369--370]{mcdsal17}.
The formula in the proof of
\cite[Proposition~9.3.1]{mcdsal17}
shows, moreover, that its generating Hamiltonian
$H_{r,u}$ can be chosen smoothly in $(r,u)$
and with support in the same fixed compact subset.
Setting $F_{r,u}:=-f_{r,u}$ gives
the corresponding strict contact-Hamiltonian isotopy
\[
\Phi_{r,u}^{H_{r,u}}=
\big(b-F_{r,u},\varphi_{r,u}\big)
\]
with
\[
\Phi_{0,u}^{H_{0,u}}=\id
\quad\text{and}\quad
\Phi_{1,u}^{H_{1,u}}\in\MM_0
\,.
\]
As $S^{i-1}$ is compact
and all Hamiltonians have support
in the fixed compact set $V_a$,
the number $a_0$ in
Lemma \ref{lem:pi1ofrelemistrivial}
can be chosen independently of $u$.
For every $u\in S^{i-1}$, set
$P_0(u):=P_0(H_{\cdot,u})$
as in that lemma.
Since the maximum is Lipschitz continuous
with respect to the supremum norm,
the {\bf breathing lid} $u\mapsto P_0(u)$ is continuous.
Consequently, by their explicit construction,
the potential well loops and the homotopies in
Lemma \ref{lem:pi1ofrelemistrivial}
depend continuously on $u$.
Using the functions $\tau$, $\sigma$ and $\varrho$
from that lemma, define
\[
\Gamma''(ru):=
\Phi_{\varrho(r),u}^{H_{\varrho(r),u}}
\circ
\Lambda_{\tau(r),\sigma(r)}^{P_0(u),a_0}
\,.
\]
The cut-off functions are constant near their endpoints.
Hence, $\Gamma''(ru)=\id$
for all sufficiently small $r$,
independently of $u$, and
$\Gamma''(u)=\Gamma(u)$
for $u\in S^{i-1}$.
Thus, $\Gamma''$ defines a continuous map
from $D^i$ into $\MM(S)$.
Since $D^i$ is connected and
$\Gamma''(0)=\id$,
its image is contained in $\MM_0$.
Moreover, the homotopies provided by
Lemma \ref{lem:pi1ofrelemistrivial}
assemble to a homotopy of relative representatives
from $\Gamma$ to $\Gamma''$.
Since $\Gamma''$ takes all of $D^i$
into $\MM_0$,
the relative homotopy class represented by $\Gamma$
is trivial.

Therefore,
$\pi_i(\EE_0,\MM_0)$ is trivial for all $i\geq1$.
Since $\EE_0$ is path-connected,
the long exact homotopy sequence
of the pair $(\EE_0,\MM_0)$,
together with triviality of $\pi_1(\EE_0,\MM_0)$,
implies that $\MM_0$ is path-connected.
Hence, the inclusion of $\MM_0$ into $\EE_0$ 
is a weak homotopy equivalence.
By the discussion at the beginning of the proof,
it is a homotopy equivalence.
\end{proof}

We call the Liouville shape $(S,V_S,f_{\pm})$
in $(\R\times V,\rmd b+\lambda)$
{\bf sub- and super-homogeneous}
provided that
\[
\pm\beta_t(f_{\pm})
\leq\pm f_{\pm}
\]
on $\varphi^Y_{-t}(V_S)$ for all $0\leq t<\varepsilon_S$.
This is satisfied, for example,
for shape functions $f_{\pm}$ with $f_-<0<f_+$ on $V_S$,
for which
$(-\infty,\varepsilon_S)\ni s\mapsto\pm f_{\pm}\big(\varphi^Y_s(w)\big)$
is (radially) decreasing for all $w\in\partial V_S$.

\begin{lem}
\label{lem:alexshapeineq}
Let $(S,V_S,f_{\pm})$ be a sub- and super-homogeneous
Liouville shape in $(\R\times V,\rmd b+\lambda)$.
Then $A_t(\Phi)\in\MM(S)$
for all $\Phi\in\MM(S)$ and $0\leq t<\varepsilon_S$.
\end{lem}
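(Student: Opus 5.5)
Since $A_t(\Phi)\in\EE$ for $0\leq t<\varepsilon_S$ (Section~\ref{subsec:alexdilation}), it remains to verify the shape inequality for the pair $\big(a_t(\varphi),\beta_t(f)\big)$, that is,
\[
f_-<a_t(\varphi)^*f_+-\beta_t(f)
\quad\text{on }V_S\,.
\]
Outside $\supp a_t(\varphi)\cup\supp\beta_t(f)=\varphi^Y_{-t}\big(\supp(\varphi)\cup\supp(f)\big)$ the right-hand side equals $f_+$, and $f_-<f_+$ holds there. The substantial case is therefore the region $\varphi^Y_{-t}(V_S)$.

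We evaluate at a point $v\in\varphi^Y_{-t}(V_S)$ and set $w:=\varphi^Y_t(v)\in V_S$. By definition,
\[
a_t(\varphi)^*f_+(v)=f_+\big(\varphi^Y_{-t}(\varphi(w))\big)
\quad\text{and}\quad
\beta_t(f)(v)=\rme^{-t}f(w)\,.
\]
To compare with the original inequality at $w$, we use the sub- and super-homogeneity hypotheses at the point $\varphi^Y_{-t}(u)$ for $u\in V_S$:
\[
f_+\big(\varphi^Y_{-t}(u)\big)\geq \rme^{-t}f_+(u)
\quad\text{and}\quad
f_-(v)\leq \rme^{-t}f_-(w)\,.
\]
The first follows from $\beta_t(f_+)\leq f_+$, i.e.\ $\rme^{-t}f_+\circ\varphi^Y_t\leq f_+$ on $\varphi^Y_{-t}(V_S)$, applied at $\varphi^Y_{-t}(u)$. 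The second comes from $-\beta_t(f_-)\leq -f_-$, i.e.\ $f_-\leq \rme^{-t}f_-\circ\varphi^Y_t$.

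Applying the first inequality with $u=\varphi(w)\in V_S$, we obtain
\[
a_t(\varphi)^*f_+(v)-\beta_t(f)(v)\geq \rme^{-t}\big(\varphi^*f_+-f\big)(w)>\rme^{-t}f_-(w)\geq f_-(v)\,.
\]
The strict inequality in the middle is the shape inequality for $\Phi\in\MM(S)$ at $w$, and the final step is the second homogeneity inequality. This proves the claim.

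The only delicate point is bookkeeping. We must check that the homogeneity hypothesis is applied exactly at points of $\varphi^Y_{-t}(V_S)$, namely $v$ and $\varphi^Y_{-t}(\varphi(w))$, which holds since $\varphi$ preserves $V_S$. We must also check that the direction of the inequalities matches the sign convention $\pm\beta_t(f_\pm)\leq\pm f_\pm$. Points of $V_S\setminus\varphi^Y_{-t}(V_S)$ lie outside the supports and are covered by the first case.
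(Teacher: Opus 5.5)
Your proof is correct and follows the paper's argument. The paper also disposes of the complement of $\varphi^Y_{-t}(V_S)$ using $f_-<f_+$. On $\varphi^Y_{-t}(V_S)$ it combines three ingredients: the shape inequality transported by $\beta_t$, sub-homogeneity pulled back by $a_t(\varphi)$ via the identity $(a_t(\varphi))^*\beta_t(f_+)=\beta_t(\varphi^*f_+)$, and super-homogeneity $\beta_t(f_-)\geq f_-$. You write the same chain of inequalities pointwise, adding the check that both homogeneity hypotheses are applied at points of $\varphi^Y_{-t}(V_S)$.
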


\begin{proof}
Write $\Phi=(b+f,\varphi)$ and assume that
the shape inequality $f_-<\varphi^*f_+-f$
holds.
Outside $\varphi^Y_{-t}(V_S)$,
the contact Alexander dilation $A_t(\Phi)$ is the identity,
so that the claim follows from $f_-<f_+$.
On $\varphi^Y_{-t}(V_S)$,
observe that
\[
\big(a_t(\varphi)\big)^*f_+-\beta_t(f)>
\big(a_t(\varphi)\big)^*f_++\beta_t(f_-)-\beta_t(\varphi^*f_+)
\]
by the shape inequality for $(\varphi,f)$.
An application of sub-homogeneity yields
\[
\big(a_t(\varphi)\big)^*f_+\geq
\big(a_t(\varphi)\big)^*\beta_t(f_+)=
\beta_t(\varphi^*f_+)
\,.
\]
Together with super-homogeneity $\beta_t(f_-)\geq f_-$,
the shape inequality follows for $\big(a_t(\varphi),\beta_t(f)\big)$.
\end{proof}


\subsection{Hamiltonian contraction spaces\label{subsec:hamcontrspace}}

Let $(S,V_S,f_{\pm})$ be a shape in
the contactisation $(\R\times V,\rmd b+\lambda)$.
We call $(V_S,\rmd\lambda)$
a {\bf Hamiltonian contraction space}
if the following conditions are satisfied:
 	\begin{enumerate}
	 \item The boundary $\partial V_S$ is connected.
	 \item There exists a {\bf time}-$C^1$
	 {\bf contraction} of $\SS$, i.e.\ a continuous map
	 \[
	 C\co\SS\times[0,1]\lra\SS
	 \,,\quad
	 (\varphi,t)\longmapsto
	 C_t(\varphi):=C(\varphi,t)
	 \,,
	 \]
	 satisfying $C_0=\id_{\SS}$, $C_1=\id_{V_S}$ and
	 $C_t(\id_{V_S})=\id_{V_S}$ for all $t\in[0,1]$,
	 such that for every $\varphi\in\SS$ the evaluation map
	 \[
	 V_S\times[0,1]\lra V_S
	 \,,\quad
	 (v,t)\longmapsto C_t(\varphi)(v)
	 \,,
	 \]
	 is smooth and
	 \[
	 \SS\times[0,1]\lra C_c^{\infty}(V_S,TV_S)
	 \,,\quad
	 (\varphi,t)\longmapsto
	 \frac{\partial}{\partial t}C(\varphi,t)
	 \,,
	 \]
	 is continuous in the strong $C^{\infty}$-topology.
	 \item The group $\SS$ consists entirely
	 of Hamiltonian diffeomorphisms.
 	\end{enumerate}

Using the group structure,
$C_t$ can be substituted by
$\big(C_t(\id_{V_S})\big)^{-1}\circ C_t$
in order to achieve
$C_t(\id_{V_S})=\id_{V_S}$ for all $t\in[0,1]$
if necessary.
Notice that the time-$C^1$ condition requires only
the first time derivative to depend continuously
on $(\varphi,t)$;
the individual contraction paths are required to be smooth.

The construction in \cite[p.~417]{agz22}
applies to a Hamiltonian contraction space as follows:
Observe that
$\varphi_t:=C_{1-t}(\varphi)$ is a compactly supported
Hamiltonian isotopy of $(V_S,\rmd\lambda)$
with generating vector field
$X_t(\varphi):=\dot\varphi_t\circ\varphi_t^{-1}$.
By the time-$C^1$ assumption, the map
	 \[
	 X\co\SS\times[0,1]\lra C_c^{\infty}(V_S,TV_S)
	 \,,\quad
	 (\varphi,t)\longmapsto
	 X_t(\varphi):=X(\varphi,t)
	 \,,
	 \]
is continuous in the strong $C^{\infty}$-topology,
and the evaluation
$(v,t)\mapsto X_t(\varphi)(v)$ is smooth
for every fixed $\varphi\in\SS$.
Since the path $\varphi_t$ consists of
Hamiltonian diffeomorphisms,
the compactly supported $1$-form
$\iota_{X_t(\varphi)}\rmd\lambda$
is exact.
By the continuous primitive construction from
Remark \ref{rem:freeactionstrnex},
we obtain a continuous map
	\[
	H\co\SS\times[0,1]\lra C_c^{\infty}(V_S)
	\,,\quad
	(\varphi,t)\longmapsto
	H_t(\varphi):=H(\varphi,t)
	\,,
	\]
such that
$\iota_{X_t(\varphi)}\rmd\lambda=-\rmd H_t(\varphi)$.
For every fixed $\varphi$, the map
$(v,t)\mapsto H_t(\varphi)(v)$ is smooth,
so that $\varphi_t$ is the Hamiltonian isotopy
generated by the Hamiltonian $H_t(\varphi)$.
Observe that
$H_t(\id_{V_S})=0$ for all $t\in[0,1]$.

\begin{prop}
\label{prop:mscontractible}
Let $(S,V_S,f_{\pm})$ be a sub- and super-homogeneous
Liouville shape in $(\R\times V,\rmd b+\lambda)$
and assume that $(V_S,\rmd\lambda)$
is a Hamiltonian contraction space.
Then $\MM(S)\simeq\{(\id_{V_S},0)\}$
via a time-$C^1$ strong deformation retraction.
\end{prop}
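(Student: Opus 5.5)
We build the retraction in three stages: Alexander dilation, the time-$C^1$ contraction lifted to $\EE$, and the potential well of Lemma~\ref{lem:pi1ofrelemistrivial}, whose lid height is now chosen to depend continuously on $(\varphi,f)$. The homotopy is parametrised by $s\in[0,1]$.

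\emph{Step 1: Alexander dilation.} On an initial time interval we apply the contact Alexander dilation $A_t$ for $t\in[0,a]$ with a fixed $a<\varepsilon_S$. By Lemma~\ref{lem:alexshapeineq}, $A_t$ preserves $\MM(S)$, since the shape is sub- and super-homogeneous. It fixes $\id$ and depends continuously and smoothly in $t$ on $\Phi$. After this stage every element is supported in $\varphi^Y_{-a}(\bar V_S)$, and the collar $[-a_0,0]\times\partial V_S$ with $a_0$ fixed, chosen so that $\varphi^Y_{-a}(\bar V_S)$ avoids it, is untouched uniformly over the whole space. This uniform choice of $a_0$ is the point of Step~1, because the hypotheses give no uniform support bound on $\MM(S)$.

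\emph{Step 2: the contraction path and its strict lift.} For $\Phi=(b+f,\varphi)$ in the image of Step~1, consider the path $\varphi_t:=C_{1-t}(\varphi)$, reparametrised by $\tau$ from Lemma~\ref{lem:pi1ofrelemistrivial}. Its Hamiltonian $H_t(\varphi)$ is constructed in Section~\ref{subsec:hamcontrspace} and depends continuously on $\varphi$. This uses condition~(3) and connectedness of $\partial V_S$, which gives a unique normalisation of $f$ and $H$. The strict contact-Hamiltonian isotopy $\Phi_t=(b-F_t,\varphi_t)$ has $F_t$ equal to the perturbed action, and it ends at $(b-F_1,\varphi)$. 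By the injectivity and normalisation in Remark~\ref{rem:freeactionstrnex}, $-F_1=f$, so the isotopy ends at $\Phi$. Continuity of $F_t$ in $(\varphi,t)$ follows from the time-$C^1$ hypothesis, since $F_t$ is built from $X_t$, $H_t$ and the flow by integration. Smoothness in $t$ for fixed $\varphi$ also holds.

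A problem appears here. The contraction $C_t$ need not preserve support in $\varphi^Y_{-a}(\bar V_S)$, so the collar may be entered. The first fix to try is to conjugate $C$ by the dilation $a_a$, that is, use $a_a\circ C_t\circ a_a^{-1}$. This keeps the supports inside $\varphi^Y_{-a}(\bar V_S)$ whenever $\varphi$ is.

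\emph{Step 3: potential well and assembly.} Running the path backwards from $\Phi$ to $\id$ leaves $\MM(S)$ in general. To repair this, define the breathing lid $P_0(\Phi):=P_0(H_\cdot(\varphi))$ as in Lemma~\ref{lem:pi1ofrelemistrivial}. It is continuous in $\Phi$ by Lipschitz continuity of the maximum, and it vanishes at $\id$ because $H(\id)=0$. Along the time-reversed version of $\Phi_{\varrho(t)}\circ\Lambda^{P_0,a_0}_{\tau(t),\sigma(t)}$, the proof of that lemma shows the shape inequality holds at every time. The homotopy is then the concatenation of three pieces: Step~1, then the inverse direction of the well-augmented path from $A_a(\Phi)$ to $\id$, and finally the fixed identity.

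\emph{Verifications.} Each stage fixes $\id$: $A_t(\id)=\id$, $C_t(\id)=\id$, and $P_0(\id)=0$. This makes the retraction strong. The time-$C^1$ property follows because the $t$-derivative of each piece is expressed through $\partial_tC$, $X_P$ and $\tau'$, all continuous.

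The main obstacle is Step~2: controlling supports and $a_0$ uniformly, and proving that the reversed well path starting from $\Phi$ itself, rather than from $\id$, stays in $\MM(S)$. For the latter we reorder the time intervals: first raise the lid, then run the contraction, then lower the lid. The estimates are the same three-interval ones as in Lemma~\ref{lem:pi1ofrelemistrivial}.
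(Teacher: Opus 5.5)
Your proposal is correct and is essentially the paper's proof. Your conjugated contraction $a_a\circ C_t\circ a_a^{-1}$ applied to $A_a(\Phi)$ gives exactly the dilated path $A_{a_0}(\Phi_{\varrho(t)})$ that the paper composes with the potential well loop $\Lambda^{P_0(H_t),a_0}_{\tau(t),\sigma(t)}$ of uniform width $a_0=\min\{a_S,\varepsilon_S/2\}$. Lemma~\ref{lem:alexshapeineq} covers the dilation phase, and the time-reversal of Lemma~\ref{lem:pi1ofrelemistrivial} covers the rest, so no genuine ``reordering'' is needed. Two small details should be added:
\begin{itemize}
\item The dilation parameter must be switched on by a cut-off that is constant near the junction, as the paper does with $\pi(t)=a_0\bigl(1-\tau(t-3/4)\bigr)$. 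Otherwise the concatenation is not smooth in time, and the time-$C^1$ property fails at the junction.
\item Besides $a_0<a$, you also need $a_0\le a_S$, so that $f_-<0<f_+$ holds on the well's collar.
\end{itemize}
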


\begin{proof}
Notice that $\SS=\EE$,
see Section \ref{subsec:contactified}.
Hence,
taking a time-$C^1$ contraction $C$
as in the above definition,
the path $\varphi_t=C_{1-t}(\varphi)$
induces a strict contact-Hamiltonian isotopy
$\Phi_t=(b-F_t,\varphi_t)$
generated, as above, by the Hamiltonian function
$H_t=H_t(\varphi)$.
The perturbed action function
$F_t=F_t\big(H_t(\varphi)\big)$
can be computed as in Section \ref{subsec:contactified}.
Therefore,
$\Phi_t$ is smooth in $t$ and
depends continuously on $\varphi$.

Consider the restrictions
$C\co\MM(S)\times[0,1]\ra\SS$
and
$H\co\MM(S)\times[0,1]\ra C_c^{\infty}(V_S)$,
so that the corresponding path
$(\varphi_t,-F_t)$ lies in $\EE$
and has endpoints in $\MM(S)$.
In order to turn $\Phi_t$ into a
Hamiltonian monodromy isotopy,
we consider a potential well loop
$\Lambda_{\tau(t),\sigma(t)}^{P_0,a_0}$
as in Lemma \ref{lem:pi1ofrelemistrivial}.
With the notation from there,
set
\[
\pi(t):=
a_0
\cdot
\big(1-\tau(t-3/4)\big)
\,.
\]
This time,
$a_0:=\min\big\{a_S,\varepsilon_S/2\big\}$
is chosen independently of the Hamiltonian.  
Invoking the Alexander dilation from
Section \ref{subsec:alexdilation},
observe that $\big(a_{\pi(t)}(\varphi_t),-\beta_{\pi(t)}(F_t)\big)$,
representing $A_{\pi(t)}(\Phi_t)$,
has support in
\[
\varphi^Y_{-\pi(t)}(V_S)=
V_S\setminus\Big(\big[-\pi(t),0\big)\times\partial V_S\Big)
\]
for all $t\in[0,1]$.

Define
\[
K(\Phi_t)_t:=
A_{\pi(t)}(\Phi_{\varrho(t)})\circ
\Lambda_{\tau(t),\sigma(t)}^{P_0(H_t),a_0}
\,.
\]
For $t\in[0,3/4]$ the function $\pi$ equals $a_0$,
so that the shape inequality follows from
Lemma \ref{lem:pi1ofrelemistrivial}.
For $t\in[3/4,1]$ the functions $\tau$, $\sigma$ and $\varrho$
equal $1$.
In this situation
$\Phi_{\varrho(t)}=(b-F_1,\varphi_1)$
and
$\Lambda_{\tau(t),\sigma(t)}^{P_0(H_t),a_0}=\id$,
for which the shape inequality is verified
in Lemma \ref{lem:alexshapeineq}.
As observed in the proof of
Proposition \ref{prop:homoequiv},
the dependence of the breathing lid $P_0(H_t)$
is continuous in $H_t$,
and hence in $\varphi$.
Therefore,
we get a time-$C^1$ contraction
\[
K\co
\MM(S)\times[0,1]\lra\MM(S)
\,,
\qquad
\big((\varphi,f),t\big)\longmapsto
K_t\big((\varphi,f)\big):=K(\Phi_t)_{1-t}
\,,
\]
so that $K_0=\id_{\MM(S)}$,
$K_1=(\id_{V_S},0)$ and
$K_t\big((\id_{V_S},0)\big)=(\id_{V_S},0)$
constantly for all $t\in[0,1]$.
\end{proof}


\section{Global parametrisation\label{sec:globparam}}

Let $(S,V_S,f_{\pm})$ be a sub- and super-homogeneous
Liouville shape in $(\R\times V,\rmd b+\lambda)$
whose shadow set $(V_S,\rmd\lambda)$
is a Hamiltonian contraction space.
We set
\[
\alpha_{\lambda}:=\rmd b+\lambda
\]
and write $\xi_{\lambda}:=\ker\alpha_{\lambda}$.


\subsection{\texorpdfstring{$U$}{U}-transform}
\label{subsec:utransform}

By Proposition \ref{prop:mscontractible},
there exists a time-$C^1$ contraction $K$ of $\MM(S)$.
As described at the beginning of
Section \ref{subsec:hamcontrspace},
the reversed contraction paths
\[
t\longmapsto K_{1-t}(\varphi,f)
\]
are Hamiltonian monodromy isotopies.
Their generating Hamiltonians assemble into
a continuous map
\[
H\co\MM(S)\times[0,1]
\lra C_c^{\infty}(V_S)
\]
such that
$H_t\big((\id_{V_S},0)\big)=0$
for all $t\in[0,1]$.
For every fixed $(\varphi,f)$,
the family $H_t(\varphi,f)$ is smooth in $t$.
Fixing a cut-off function $\chi$ uniformly
in Lemma \ref{lem:contifnotrappedorbitshamiltonianmonod}
and using the remark following its proof
defines the $\Psi$-{\bf transform},
a continuous map
\[
\Psi_{(\,.\,,\,.\,)}\co
\MM(S)\lra\Diff(\R\times V,\xi_{\lambda})
\,,
\]
by setting
$
\Psi_{(\varphi,f)}:=
\Psi_{(H_t(\varphi,f),\chi)}
$,
such that $\Psi_{(\id_{V_S},0)}=\id_{\R\times V}$.
Here,
$\Diff(\R\times V,\xi_{\lambda})$
denotes the group of contactomorphisms of
$(\R\times V,\xi_{\lambda})$
equipped with the compact-open
$C^\infty$-topology.
The map $\Psi_{(\,.\,,\,.\,)}$
comes with a unique continuous map
\[
h_{(\,.\,,\,.\,)}\co
\MM(S)\lra C_c^{\infty}\big(\Int(D_S)\big)
\,,
\]
given by
$
h_{(\varphi,f)}:=
h_{(H_t(\varphi,f),\chi)}
$,
such that $h_{(\id_{V_S},0)}=0$.
The map $h_{(\,.\,,\,.\,)}$
is determined by the Darboux chart
\[
\Psi_{(\varphi,f)}=\Phi_{\rme^h\alpha_{\lambda}}
\,,
\]
where $h=h_{(\varphi,f)}$ is the function appearing
in the conformal factor of the contact form $\rmd b+\lambda$
in Lemma \ref{lem:contifnotrappedorbitshamiltonianmonod}.
Observe that $\Psi_{(\,.\,,\,.\,)}$ and $h_{(\,.\,,\,.\,)}$
only depend on the class $[M,\alpha]$ in $\VV(S)$
under the identification with $(\varphi,f)$ in $\MM(S)$
via the monodromy bijection
from Proposition \ref{prop:monodromymap}.

The global Darboux chart satisfies
\[
(\Phi_{\alpha})_*\alpha_{\lambda}=\alpha
\]
for vertically convex contact forms,
where we simply write $\alpha$ instead of $\hat{\alpha}$.
We call the gluing-map-factorising diffeomorphism
  \[
  U_{\alpha}:=
  \Phi_{\rme^h\alpha_{\lambda}}\circ(\Phi_{\alpha})^{-1}
  \]
from Remark \ref{rem:contifnotrappedorbitshamiltonianmonod},
where $h=h_{(\varphi,f)}$ as above,
the $U$-{\bf transform}.
Notice that $U_{\alpha_{\lambda}}=\id_{\R\times V}$
and that
  \[
  (U_{\alpha})_*\alpha=
  \rme^{h_{(\varphi,f)}}\alpha_{\lambda}
  \,.
  \]
A change $\Phi\co(M',\alpha')\ra(M,\alpha)$
of the representative of $[M,\alpha]$ in $\VV(S)$
as in Section \ref{subsec:verticalcategory},
which necessarily equals
$\Phi=\Phi_{\alpha}\circ\Phi_{\alpha'}^{-1}$,
yields $U_{\alpha'}=U_{\alpha}\circ\Phi$.
Consequently,
we obtain that
$(U_{\alpha'})_*\alpha'=(U_{\alpha})_*\alpha$.

Let
  \[
  \AAA
  \]
be the space
of all vertically convex contact forms $\alpha$
on $\Int(D_S)$
such that $\alpha-\alpha_{\lambda}$ has compact support.
The subspace of $\xi_{\lambda}$-defining
contact forms in $\AAA$ is denoted by
$\AAA_{\xi_{\lambda}}$.
Observe that $\AAA_{\xi_{\lambda}}$ is contained in
\[
\rme^{C_c^{\infty}(\Int(D_S))}\cdot\alpha_{\lambda}
\,.
\]
The $U$-transform yields an injection
  \[
  \VV(S)
  \lra
  \AAA_{\xi_{\lambda}}
  \]
by sending
the class $[M,\alpha]\equiv(\varphi,f)$ 
to the contact form
$\rme^{h_{(\varphi,f)}}\alpha_{\lambda}$.
A left inverse is given by
\[
\II\co
\AAA_{\xi_{\lambda}}\lra\VV(S)
\,,
\quad
\rme^h\alpha_{\lambda}
\longmapsto
\big[D_S,\rme^h\alpha_{\lambda}\big]
\,,
\]
as the $U$-transform sends $[M,\alpha]$
to $\big[D_S,\rme^{h_{(\varphi,f)}}\alpha_{\lambda}\big]$,
see Remark \ref{rem:contifnotrappedorbitshamiltonianmonod}.

In order to pass to the corresponding orbit space,
consider the group
\[
\DD
\]
of compactly supported diffeomorphisms
of $\Int(D_S)$.
The subgroup of contactomorphisms
of $\xi_{\lambda}$ is denoted by
\[
\DD_{\xi_{\lambda}}=
\DD\cap\Diff(\R\times V,\xi_{\lambda})
\,.
\]
Setting
\[
  \VV_{\xi_{\lambda}}:=
  \AAA_{\xi_{\lambda}}/\DD_{\xi_{\lambda}}
  \,,
\]
the $U$-transform induces a map
  \[
  \VV(S)
  \lra
  \VV_{\xi_{\lambda}}
  \,,
  \]
which is a bijection.
The right inverse is given by applying
the above left inverse $\II$ to a representative
of $[\rme^h\alpha_{\lambda}]$.
Indeed,
because $(U_{\rme^h\alpha_{\lambda}})_*$ sends
$\rme^h\alpha_{\lambda}$
to the contact form
$\rme^{h_{(\varphi,f)}}\alpha_{\lambda}$,
where
$\big[D_S,\rme^h\alpha_{\lambda}\big]\equiv(\varphi,f)$,
we get $U_{\rme^h\alpha_{\lambda}}\in\DD_{\xi_{\lambda}}$,
i.e.\
$[\rme^{h_{(\varphi,f)}}\alpha_{\lambda}]=[\rme^h\alpha_{\lambda}]$.

\begin{rem}[Homeomorphic quotients]
\label{rem:homeoquspaces}
Define
  \[
    \VV:=\AAA/\DD
    \,.
  \]
By the above argument
the $U$-transform induces a homeomorphism
  \[
  \VV
  \lra
  \VV_{\xi_{\lambda}}
  \]
w.r.t.\ the quotient topology.
Its inverse is induced by the inclusion
$\AAA_{\xi_{\lambda}}\ra\AAA$.
\end{rem}


\subsection{Computing the quotient\label{subsec:compquot}}

Let $(S,V_S,f_{\pm})$
be a sub- and super-homogeneous Liouville shape
in $(\R\times V,\rmd b+\lambda)$
whose shadow set $(V_S,\rmd\lambda)$
is a Hamiltonian contraction space.
Then the vertical category $\VV(S)$ from
Section \ref{subsec:verticalcategory}
admits a more elementary description.
As seen in Section \ref{subsec:utransform}
and Remark \ref{rem:homeoquspaces},
the $U$-transform provides the quotient descriptions
$\VV$ and $\VV_{\xi_{\lambda}}$ of $\VV(S)$;
these quotient spaces are homeomorphic.
Using the $\Phi$-{\bf transform}
$(\Phi_{\alpha})_*\alpha_{\lambda}=\alpha$
defined by the global Darboux chart,
these quotient spaces can be described directly
in terms of the monodromy space
\[
\MM\equiv\MM(S)
\,.
\]
This gives the following topological variant of
Proposition \ref{prop:monodromymap}:

\begin{prop}[Monodromy homeomorphism]
\label{prop:monodromyhomeo}
  The $\Phi$-transform induces a homeomorphism
  \[
  \VV\lra\MM
  \,,
  \quad
  [\alpha]\longmapsto(\varphi_{\alpha},f_{\alpha})
  \,,
  \]
  such that
  $[\alpha_{\lambda}]\mapsto(\id_{V_S},0)$.
  The analogous statement with $\VV$ replaced by
  $\VV_{\xi_{\lambda}}\!$ holds.
\end{prop}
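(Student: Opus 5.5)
We will consider the map $\mu\co\AAA\ra\MM$, $\alpha\mapsto(\varphi_\alpha,f_\alpha)$. By Section \ref{subsec:globaldarbouxchart} it is continuous in the strong $C^{\infty}$-topology, and by Lemma \ref{lem:openshapeinequality} and Section \ref{subsec:monodromyspace} it takes values in $\MM$. The first task is to show that $\mu$ is constant on $\DD$-orbits. Take $\Psi\in\DD$ and $\alpha\in\AAA$. Extend $\Psi$ by the identity to $\R\times V$. Then $\Psi_*\alpha$ is again vertically convex, and $\Psi$ is a strict contactomorphism from $\alpha$ to $\Psi_*\alpha$. Since $\Psi$ is compactly supported in $\Int(D_S)$, it factorises the gluing maps. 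Proposition \ref{prop:monodromymap} then gives $\mu(\Psi_*\alpha)=\mu(\alpha)$. Consequently $\mu$ descends to a continuous map $\bar\mu\co\VV\ra\MM$, continuity following from the universal property of the quotient topology. Clearly $\bar\mu[\alpha_\lambda]=(\id_{V_S},0)$. Injectivity of $\bar\mu$ also comes from Proposition \ref{prop:monodromymap}. If $\mu(\alpha)=\mu(\alpha')$, then $\Phi_\alpha\circ\Phi_{\alpha'}^{-1}$ is a strict contactomorphism equal to the identity near $\End(D_S)$. Restricted to $\Int(D_S)$ it is therefore an element of $\DD$ carrying $\alpha'$ to $\alpha$.

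To obtain surjectivity together with a continuous inverse, I will construct a continuous section $s\co\MM\ra\AAA$ with $\mu\circ s=\id_{\MM}$. The natural candidate is the $\Psi$-transform of Section \ref{subsec:utransform}:
\[
s(\varphi,f):=\rme^{h_{(\varphi,f)}}\alpha_\lambda=(\Psi_{(\varphi,f)})_*\alpha_\lambda\,.
\]
By Lemma \ref{lem:contifnotrappedorbitshamiltonianmonod} and the remark following it, $\Psi_{(\varphi,f)}$ is the global Darboux chart of $\rme^{h_{(\varphi,f)}}\alpha_\lambda$, and it equals $(b+f,\varphi)$ near the super action range. Hence $\mu(s(\varphi,f))=(\varphi,f)$. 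With this section in hand, $\bar\mu^{-1}=q\circ s$, where $q$ is the quotient projection. This inverse is continuous provided $s$ is continuous into $\AAA$ with its strong $C^{\infty}$-topology.

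The main obstacle is exactly this continuity of $h_{(\cdot,\cdot)}\co\MM\ra C_c^{\infty}(\Int(D_S))$ in the extrinsic strong topology. The paper asserts it in Section \ref{subsec:utransform}, so I would assume it. If I had to justify it, I would proceed as follows. The Hamiltonians $H_t(\varphi,f)$ depend continuously on $(\varphi,f)$ by Proposition \ref{prop:mscontractible} together with the primitive construction of Remark \ref{rem:freeactionstrnex}. The cut-off Hamiltonians $K_t$ from the proof of Lemma \ref{lem:contifnotrappedorbitshamiltonianmonod} involve $g_+^t$ and the fixed function $\chi$, and they are supported in $\R\times K$ for a compact set $K\subset V_S$. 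Parameter-dependent ODE theory then gives continuity of $\Psi_t$ and of $h_t$, with uniform jet control on compact sets. A subtlety is that the supports of the $h$'s may approach $\partial D_S$. Near the equator the data $H_t$ are small in every $C^k$-norm, since the potential well contributions are also controlled. Uniform jet bounds on $\bar D_S$ therefore follow as in the metrisability discussion of Section \ref{subsec:functionspaces}.

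For $\VV_{\xi_\lambda}$ the same argument works. Because $s$ takes values in $\AAA_{\xi_\lambda}$, invariance under $\DD_{\xi_\lambda}\subset\DD$ and injectivity carry over; in the injectivity argument, $\Phi_\alpha\circ\Phi_{\alpha'}^{-1}$ lies in $\DD_{\xi_\lambda}$ whenever both forms define $\xi_\lambda$. Alternatively, one can compose with the homeomorphism of Remark \ref{rem:homeoquspaces}.
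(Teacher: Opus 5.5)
Your proposal is correct and follows the paper's proof: the map descends from the continuous monodromy map on $\AAA$ via the well-definedness part of Proposition \ref{prop:monodromymap}, and the inverse is $(\varphi,f)\mapsto[\rme^{h_{(\varphi,f)}}\alpha_\lambda]$, whose continuity in both arguments rests on the continuity of $h_{(\cdot,\cdot)}$ asserted in Section \ref{subsec:utransform}. The only cosmetic difference is that you prove injectivity directly via $\Phi_\alpha\circ\Phi_{\alpha'}^{-1}\in\DD$. The paper instead checks $[\alpha]=[\rme^{h}\alpha_\lambda]$ using $U_\alpha\in\DD$, and $U_\alpha$ is exactly your diffeomorphism for the pair $\rme^{h}\alpha_\lambda$ and $\alpha$.
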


\begin{proof}
The map is defined by the restriction
$\Phi_{\alpha}|_{Z_{\alpha}}=(b+f_{\alpha},\varphi_{\alpha})$
on the super action range.
If $[\alpha]=[\alpha']$, then
$\Phi_{\alpha}\circ\Phi_{\alpha'}^{-1}\in\DD$
by Remark \ref{rem:freeactionstrnex}
and the first part of the proof of
Proposition \ref{prop:monodromymap}.
Consequently,
$Z_{\alpha}=Z_{\alpha'}$ and
$\Phi_{\alpha}$ and $\Phi_{\alpha'}$
agree on the super action range.
Hence,
$(\varphi_{\alpha},f_{\alpha})=(\varphi_{\alpha'},f_{\alpha'})$
and the monodromy map is well defined.

The inverse map is given via the $\Psi$-transform
  \[
  \MM\lra\VV
  \,,
  \quad
  (\varphi,f)\longmapsto
  [\rme^{h_{(\varphi,f)}}\alpha_{\lambda}]
  \,,
  \]
recalling the unique continuous map
$h_{(\,.\,,\,.\,)}\co\MM\ra C_c^{\infty}\big(\Int(D_S)\big)$
from Section \ref{subsec:utransform}
that satisfies  
$\Psi_{(\varphi,f)}=\Phi_{\rme^h\alpha_{\lambda}}$
with $h=h_{(\varphi,f)}$
and 
\[
(\Psi_{(\varphi,f)})_*\alpha_{\lambda}
=\rme^{h_{(\varphi,f)}}\alpha_{\lambda}
\quad\text{in}\quad
\AAA_{\xi_{\lambda}}
\,.
\]
Indeed,
applying the $\Psi$-transform to the image of
$[\alpha]\mapsto(\varphi_{\alpha},f_{\alpha})$
yields $[\rme^h\alpha_{\lambda}]=[\alpha]$
with $h=h_{(\varphi_{\alpha},f_{\alpha})}$,
because $(U_{\alpha})_*\alpha=\rme^h\alpha_{\lambda}$
and $U_{\alpha}\in\DD$.
Conversely,
the $\Phi$-transform applied to
$(\varphi,f)\mapsto[\rme^{h_{(\varphi,f)}}\alpha_{\lambda}]$
yields $(\varphi,f)$,
which is the monodromy of
$\Psi_{(\varphi,f)}=\Phi_{\rme^h\alpha_{\lambda}}$
with $h=h_{(\varphi,f)}$.
The first map is continuous
because the continuous monodromy map on $\AAA$
factors through the quotient projection $\AAA\ra\VV$.
The inverse map is continuous
by the continuity of $(\varphi,f)\mapsto h_{(\varphi,f)}$,
or equivalently of the $\Psi$-transform.
Thus, the two maps are mutually inverse homeomorphisms.

Finally,
$\Phi_{\alpha_{\lambda}}=\id_{\R\times V}$,
so that $[\alpha_{\lambda}]$ is mapped to
$(\id_{V_S},0)$.
The proof for $\VV_{\xi_{\lambda}}$ is identical.
\end{proof}


\subsection{Splitting\label{subsec:splitting}}

Assume that $(S,V_S,f_{\pm})$
is a sub- and super-homogeneous Liouville shape
in $(\R\times V,\rmd b+\lambda)$
whose shadow set $(V_S,\rmd\lambda)$
is a Hamiltonian contraction space.
Section \ref{subsec:compquot}
allows the following splitting of $\AAA$:

Identify $[\alpha]$ with the image
$(\varphi_{\alpha},f_{\alpha})$
under the monodromy homeomorphism
and write $h_{[\alpha]}$
for $h_{(\varphi_{\alpha},f_{\alpha})}$
and $\Psi_{[\alpha]}$
for $\Psi_{(\varphi_{\alpha},f_{\alpha})}$.
Because, as in the above proof,
the $U$-transform satisfies
$(U_{\alpha})_*\alpha=\rme^{h_{[\alpha]}}\alpha_{\lambda}$
and $U_{\alpha}\in\DD$,
the continuous projection
  \[
  \Pi\co
  \AAA\lra\VV
  \,,
  \quad
  \alpha\longmapsto[\alpha]
  \,,
  \]
admits a global continuous section
  \[
  \Sigma\co
  \VV\lra\AAA
  \,,
  \quad
  [\alpha]\longmapsto
  \rme^{h_{[\alpha]}}\alpha_{\lambda}
  \,,
  \]
induced by the $U$- or $\Psi$-transform.
Notice that
\[
\Sigma\big([\alpha_{\lambda}]\big)=\alpha_{\lambda}
\quad\text{as}\quad
h_{[\alpha_{\lambda}]}=0
\,.
\]
A section of the projection
$\AAA_{\xi_{\lambda}}\!\ra\VV_{\xi_{\lambda}}\!$ 
is obtained analogously.

\begin{lem}
\label{lem:freeaction}
In general,
the push-forward action
  \[
  \DD\times\AAA\lra\AAA
  \,,
  \quad
  (\Phi,\alpha)\longmapsto\Phi_*\alpha
  \,,
  \]
is free.
The analogous action of $\DD_{\xi_{\lambda}}\!$
on $\AAA_{\xi_{\lambda}}\!$ is also free.
\end{lem}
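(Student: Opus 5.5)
Suppose $\Phi\in\DD$ and $\alpha\in\AAA$ satisfy $\Phi_*\alpha=\alpha$. We must show $\Phi=\id$. The plan is to move to the global Darboux chart and then invoke the rigidity statement of Remark \ref{rem:freeactionstrnex}. First we extend $\Phi$ by the identity to a diffeomorphism $\hat\Phi$ of the completion $\R\times V$. Since $\Phi$ is compactly supported in $\Int(D_S)$, the extension is smooth and satisfies $\hat\Phi_*\hat\alpha=\hat\alpha$, because $\hat\alpha=\alpha_\lambda$ outside $D_S$ and $\hat\Phi=\id$ there. A strict contactomorphism preserves the Reeb vector field, so $\hat\Phi_*R_{\hat\alpha}=R_{\hat\alpha}$ and $\hat\Phi$ commutes with the Reeb flow $\varphi_t$.

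Next we conjugate by the global Darboux chart $\Phi_\alpha$ of Proposition \ref{prop:globalflowbox}~(c). Set
\[
\Theta:=\Phi_\alpha^{-1}\circ\hat\Phi\circ\Phi_\alpha .
\]
This is a strict contactomorphism of $(\R\times V,\rmd b+\lambda)$, so in particular $\Theta_*\partial_b=\partial_b$. By Remark \ref{rem:freeactionstrnex} it has the form $\Theta=(b+g,\theta)$. Take $b_0$ below the minimum $b_-$ of $f_-$. On $\{b_0\}\times V$ we have $\Phi_\alpha=\id$, since $\Phi_\alpha(b,v)=\varphi_{b-b_-}(b_-,v)$ is the identity on $\{b\le f_-\}$ outside $D_S$. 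We also have $\hat\Phi=\id$ there, since that slice lies in $\End(D_S)$ where $\Phi$ has no support. Hence $\Theta(b_0,v)=(b_0,v)$ for all $v\in V$, and the second part of Remark \ref{rem:freeactionstrnex} gives $\Theta=\id$. Therefore $\hat\Phi=\id$, so $\Phi=\id$.

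The same argument could be phrased directly without the remark: every point of $\R\times V$ lies on a Reeb orbit meeting the slice $\{b_0\}\times V$, because there are no orbits trapped in backward time (Proposition \ref{prop:globalflowbox}~(b)). On that slice $\hat\Phi$ is the identity, and $\hat\Phi$ commutes with $\varphi_t$, so it fixes every point.

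For the contact variant, note that $\DD_{\xi_\lambda}\subset\DD$ and $\AAA_{\xi_\lambda}\subset\AAA$, with the action obtained by restriction. Freeness is therefore inherited from the first part.

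The only delicate point is making sure $\hat\Phi$ really fixes a full Cauchy slice $\{b_0\}\times V$ that lies entirely outside the support. This holds because $D_S$ is compact, so $b_0<\min f_-$ puts the whole slice in $\End(D_S)$. No obstacle beyond this is expected.
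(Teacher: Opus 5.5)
Your proof is correct and follows the paper's argument. You conjugate by the global Darboux chart $\Phi_\alpha$ to obtain a strict contactomorphism of $(\R\times V,\rmd b+\lambda)$ that is the identity on a low horizontal slice, and then Remark \ref{rem:freeactionstrnex} forces it to be the identity. The paper phrases the slice condition via compact support of the conjugate in $\Int(D_{T_\alpha})$ and leaves the $\xi_\lambda$-case implicit, which your restriction argument covers.
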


\begin{proof}
For a given fixed point $\alpha\in\AAA$
with $\Phi_*\alpha=\alpha$,
the $\Phi$-transform satisfies
$(\Phi_{\alpha})_*\alpha_{\lambda}=\alpha$.
Hence,
$(\Phi_{\alpha}^{-1}\circ\Phi\circ\Phi_{\alpha})_*\alpha_{\lambda}
=\alpha_{\lambda}$
with $\Phi_{\alpha}^{-1}\circ\Phi\circ\Phi_{\alpha}$
having support in $\Int(D_{T_{\alpha}})$.
By Remark \ref{rem:freeactionstrnex},
$\Phi_{\alpha}^{-1}\circ\Phi\circ\Phi_{\alpha}$
and hence $\Phi$ are equal to the identity.
\end{proof}

\begin{thm}[Equivariant splitting]
\label{thm:splitting}
Let $(S,V_S,f_{\pm})$
be a sub- and super-homo\-geneous Liouville shape
in $(\R\times V,\rmd b+\lambda)$
whose shadow set $(V_S,\rmd\lambda)$
is a Hamiltonian contraction space.

Then the push-forward of the section $\Sigma$,
\[
  \DD\times\VV\lra\AAA
  \,,
  \quad
  (\Phi,[\alpha])\longmapsto
  \Phi_*\Sigma\big([\alpha]\big)
  \,,
\]
is an $\DD$-equivariant homeomorphism
with inverse
\[
  \AAA\lra\DD\times\VV
  \,,
  \quad
  \alpha\longmapsto
  (U_{\alpha}^{-1},[\alpha])
  \,.
\]
It maps
$\big(\!\id_{\R\times V},[\alpha_{\lambda}]\big)$
to $\alpha_{\lambda}$.

  The analogous $\DD_{\xi_{\lambda}}$-equivariant
  statement for
  $\DD_{\xi_{\lambda}}\!\times\VV_{\xi_{\lambda}}
  \ra\AAA_{\xi_{\lambda}}\!$ also holds.
\end{thm}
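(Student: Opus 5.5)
The plan is to verify directly that the two maps are well defined, continuous, mutually inverse and equivariant, using the section $\Sigma$ and the $U$-transform. Write $F\co\DD\times\VV\ra\AAA$, $F(\Phi,[\alpha])=\Phi_*\Sigma([\alpha])=\Phi_*\rme^{h_{[\alpha]}}\alpha_{\lambda}$, and $G\co\AAA\ra\DD\times\VV$, $G(\alpha)=(U_{\alpha}^{-1},[\alpha])$. First I check that $F$ lands in $\AAA$. Vertical convexity is invariant under push-forward by compactly supported diffeomorphisms, because Reeb orbits are mapped to Reeb orbits and nothing changes near $\partial D_S$. Moreover $\Sigma([\alpha])\in\AAA_{\xi_\lambda}$ by Section~\ref{subsec:utransform}. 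Next, $G$ lands in $\DD\times\VV$ because $U_\alpha=\Phi_{\rme^h\alpha_\lambda}\circ\Phi_\alpha^{-1}$ factorises the gluing maps. By Remark~\ref{rem:contifnotrappedorbitshamiltonianmonod} it is the identity near $\End(D_S)$, so it is a compactly supported diffeomorphism of $\Int(D_S)$. Equivariance of $F$ under left multiplication on the $\DD$-factor is immediate from $(\Psi\circ\Phi)_*=\Psi_*\Phi_*$. The base point statement follows from $\Sigma([\alpha_\lambda])=\alpha_\lambda$.

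For the inverse relations I compute as follows. First, $F(G(\alpha))=(U_\alpha^{-1})_*\rme^{h_{[\alpha]}}\alpha_\lambda=\alpha$, since $(U_\alpha)_*\alpha=\rme^{h_{[\alpha]}}\alpha_\lambda$. Second, for $\beta:=F(\Phi,[\alpha])$ we have $[\beta]=[\Sigma([\alpha])]=[\alpha]$, because $\Sigma$ is a section of $\Pi$. It remains to show $U_\beta=\Phi^{-1}$. Both $U_\beta$ and $\Phi^{-1}$ lie in $\DD$, and both push $\beta$ forward to $\rme^{h_{[\alpha]}}\alpha_\lambda=\Sigma([\beta])$. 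Hence $U_\beta\circ\Phi$ fixes $\Sigma([\alpha])$, and freeness of the action (Lemma~\ref{lem:freeaction}) gives $U_\beta\circ\Phi=\id$. So $G\circ F=\id$.

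Continuity is the main obstacle. Continuity of $F$ follows from continuity of $\Sigma$, which follows from the continuity of $h_{(\,.\,,\,.\,)}$ in Section~\ref{subsec:utransform} together with Proposition~\ref{prop:monodromyhomeo}, and from continuity of the push-forward $\DD\times\AAA\ra\AAA$ in the strong $C^\infty$-topology. Since all supports lie in the compact set $D_S$, the latter reduces to uniform convergence of jets, as in Section~\ref{subsec:functionspaces}. For $G$, continuity of $\alpha\mapsto[\alpha]$ is that of the quotient map. For $\alpha\mapsto U_\alpha$, I would write it as the composite of three pieces. The first is $\alpha\mapsto(\varphi_\alpha,f_\alpha)$, continuous by Section~\ref{subsec:globaldarbouxchart}. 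The second is $(\varphi,f)\mapsto\Psi_{(\varphi,f)}=\Phi_{\rme^h\alpha_\lambda}$. The third is $\alpha\mapsto\Phi_\alpha^{-1}$. The delicate point is that the global Darboux charts were only shown to be continuous in the compact-open $C^\infty$-topology on $\R\times V$, whereas $\DD$ carries the extrinsic strong topology. The remedy is to note that the composite $U_\alpha$ is the identity outside the compact set $D_S$. On a fixed compact neighbourhood of $D_S$, which contains the support of $U_\alpha$ for $\alpha$ near a given $\alpha_0$, compact-open $C^\infty$-convergence coincides with uniform jet convergence, which is the strong topology on $\DD$. Inversion is continuous in $\DD$, so $\alpha\mapsto U_\alpha^{-1}$ is continuous.

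The $\xi_\lambda$-version goes through verbatim. We only need $U_\alpha\in\DD_{\xi_\lambda}$ for $\alpha\in\AAA_{\xi_\lambda}$. This holds because $U_\alpha$ pushes $\alpha=\rme^{k}\alpha_\lambda$ to $\rme^{h}\alpha_\lambda$, so it preserves $\xi_\lambda$. The rest of the argument uses the freeness of the $\DD_{\xi_\lambda}$-action from Lemma~\ref{lem:freeaction} and the section of $\AAA_{\xi_\lambda}\ra\VV_{\xi_\lambda}$.
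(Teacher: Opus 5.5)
Your proposal is correct and follows essentially the paper's argument. The paper likewise combines the section property $\Pi\circ\Sigma=\id$, the identity $(U_\alpha)_*\alpha=\rme^{h_{[\alpha]}}\alpha_\lambda$ and freeness of the action (Lemma~\ref{lem:freeaction}) to identify the inverse $\alpha\mapsto(U_\alpha^{-1},[\alpha])$. It phrases bijectivity as injectivity plus surjectivity rather than checking both composites, and it only asserts continuity of $\alpha\mapsto U_\alpha$, whereas you explain why compact-open control of the Darboux charts suffices because $U_\alpha$ is supported in $D_S$.
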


\begin{proof}
The map is injective as the action is free:
Taking
$\Phi_*\Sigma\big([\alpha]\big)=\Phi_*'\Sigma\big([\alpha']\big)$,
an application of $\Pi$ yields
$\Pi\circ\Sigma\big([\alpha]\big)=\Pi\circ\Sigma\big([\alpha']\big)$
because $\Pi$ is constant along the orbits of the action.
Hence, $[\alpha]=[\alpha']$,
so that $\Phi=\Phi'$ by Lemma \ref{lem:freeaction}.
To see surjectivity, take $\alpha\in\AAA$.
Because
$[\alpha]=\Pi\circ\Sigma\big([\alpha]\big)$,
the forms $\alpha$ and $\Sigma\big([\alpha]\big)$
belong to the same $\DD$-orbit.
Hence, there exists $\Phi\in\DD$ such that
$\alpha=\Phi_*\Sigma\big([\alpha]\big)$.
Because $(U_{\alpha})_*\alpha$ and
$\Sigma\big([\alpha]\big)$ both equal
$\rme^{h_{[\alpha]}}\alpha_{\lambda}$,
we get $\Phi^{-1}=U_{\alpha}$ as the action is free.
Moreover,
we have $\Phi=U_{\Phi_*\Sigma([\alpha])}^{-1}$
for all $\Phi\in\DD$ and $[\alpha]\in\VV$,
so that the $\DD$-action
$(\Psi,\alpha)\mapsto\Psi_*\alpha$ on $\AAA$
transforms to the $\DD$-action
  $
  \big(\Psi,(\Phi,[\alpha])\big)
  \mapsto
  \big(\Psi\circ\Phi,[\alpha]\big)
  $
on $\DD\times\VV$.
The displayed map and its inverse are continuous,
since the push-forward action, the section $\Sigma$,
the projection $\Pi$, and the map
$\alpha\mapsto U_{\alpha}$ are continuous.
\end{proof}

\begin{rem}[Retraction]
\label{rem:retraction}
In view of the end of the proof
of Theorem \ref{thm:splitting},
\[
  \Phi=U_{\Phi_*\alpha}^{-1}
  \qquad\text{for all}\quad
  \Phi\in\DD
  \,\,\,\text{and}\,\,\,
  \alpha\in\Sigma(\VV)
  \,,
\]
so that $U_{\alpha}=\id_{\R\times V}$
for all $\alpha\in\Sigma(\VV)$ in particular.
Consequently, the $U$-transform
  \[
  \AAA\lra\AAA
  \,,
  \quad
  \alpha\longmapsto
  (U_{\alpha})_*\alpha
  \,,
  \]
whose image is
$\Sigma(\VV)\subset\AAA_{\xi_{\lambda}}$,
corresponds under the $\DD$-equivariant homeomorphism
of Theorem \ref{thm:splitting} to the retraction
  \[
  \DD\times\VV\lra\DD\times\VV
  \,,
  \quad
  (\Phi,[\alpha])\longmapsto
  \big(\!\id_{\R\times V},[\alpha]\big)
  \,.
  \]
The $\xi_{\lambda}$-variant of the correspondence
and Remark \ref{rem:homeoquspaces} yield
$\Sigma(\VV)=
\Sigma(\VV_{\xi_{\lambda}})\neq
\AAA_{\xi_{\lambda}}$.
\end{rem}

\begin{rem}[Non-representability]
\label{rem:non-representability}
Since $\MM$ is an open neighbourhood of
$(\id_{V_S},0)$ in $\EE$
and contains arbitrarily small nontrivial
Hamiltonian diffeomorphisms,
it is nontrivial.
Hence, $\VV\cong\MM$ is nontrivial,
cf.\ Proposition \ref{prop:monodromyhomeo}.
The restriction
  \[
  \DD_{\xi_{\lambda}}\times
  [\alpha_{\lambda}]
  \lra\AAA_{\xi_{\lambda}}
  \,,
  \quad
  \Phi\longmapsto
  \Phi_*\alpha_{\lambda}
  \,,
  \]
of the $\xi_{\lambda}$-variant
of the equivariant homeomorphism from
Theorem \ref{thm:splitting} cannot be surjective.
The homeomorphic image is equal
to the $\Pi$-fibre over $[\alpha_{\lambda}]$.
In other words,
there exists $h\in C_c^{\infty}\big(\Int(D_S)\big)$
for which the equation
$\Phi_*\alpha_{\lambda}=\rme^h\alpha_{\lambda}$
has {\it no} solution $\Phi$ in $\DD_{\xi_{\lambda}}$.
On the other hand,
observe that for
$\rme^h\alpha_{\lambda}\in\AAA_{\xi_{\lambda}}$
we have
$(\Phi_{\rme^h\alpha_{\lambda}})_*\alpha_{\lambda}=\rme^h\alpha_{\lambda}$.
The global Darboux chart
$\Phi_{\rme^h\alpha_{\lambda}}$,
however,
has compact support in $\Int(D_S)$,
meaning that
$\Phi_{\rme^h\alpha_{\lambda}}\in\DD_{\xi_{\lambda}}$,
if and only if $[\rme^h\alpha_{\lambda}]=[\alpha_{\lambda}]$,
i.e.\ in the case of trivial monodromy.

This orbit-theoretic viewpoint
is related to Polterovich's moduli space of contact forms
modulo the identity component of the contactomorphism group
\cite[Section~3]{pol02}.
In the present setting,
the monodromy provides an explicit complete parameter
transverse to the orbits.
\end{rem}

The preceding remark interprets the monodromy
as an obstruction to representability within the
$\DD$-orbit of $\alpha_{\lambda}$.
The following consequence of
Theorem \ref{thm:splitting}
shows that it provides a complete global parameter
transverse to the $\DD$-orbits.

\begin{cor}[Monodromy splitting]
\label{cor:monodromysplitting}
Let $(S,V_S,f_{\pm})$ be a sub- and super-homogeneous
Liouville shape in $(\R\times V,\rmd b+\lambda)$
whose shadow set $(V_S,\rmd\lambda)$
is a Hamiltonian contraction space.
The map
\[
\DD\times\MM\lra\AAA
\,,
\quad
\bigl(\Phi,(\varphi,f)\bigr)
\longmapsto
\Phi_*\bigl(
\rme^{h_{(\varphi,f)}}\alpha_{\lambda}
\bigr)
\,,
\]
which maps $\big(\id_{\R\times V},(\id_{V_S},0)\big)$
to $\alpha_{\lambda}$,
is an $\DD$-equivariant homeomorphism
with inverse
\[
\AAA\lra\DD\times\MM
\,,
\quad
\alpha\longmapsto
\bigl(
U_{\alpha}^{-1},
(\varphi_{\alpha},f_{\alpha})
\bigr)
\,.
\]
Likewise, the map
\[
\DD_{\xi_{\lambda}}\times\MM
\lra
\AAA_{\xi_{\lambda}}
\,,
\quad
\bigl(\Phi,(\varphi,f)\bigr)
\longmapsto
\Phi_*\bigl(
\rme^{h_{(\varphi,f)}}\alpha_{\lambda}
\bigr)
\,,
\]
is an $\DD_{\xi_{\lambda}}$-equivariant
homeomorphism.
\end{cor}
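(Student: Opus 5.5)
The plan is to obtain Corollary \ref{cor:monodromysplitting} by composing the equivariant splitting of Theorem \ref{thm:splitting} with the monodromy homeomorphism of Proposition \ref{prop:monodromyhomeo}. Write $\mu\co\VV\ra\MM$, $[\alpha]\mapsto(\varphi_\alpha,f_\alpha)$, for the latter. Its inverse is $(\varphi,f)\mapsto[\rme^{h_{(\varphi,f)}}\alpha_{\lambda}]$. Then $\id_{\DD}\times\mu^{-1}\co\DD\times\MM\ra\DD\times\VV$ is a homeomorphism, and it is trivially $\DD$-equivariant, since $\DD$ acts only on the first factor by left composition. Composing it with the homeomorphism $\DD\times\VV\ra\AAA$, $(\Phi,[\alpha])\mapsto\Phi_*\Sigma([\alpha])$, gives a $\DD$-equivariant homeomorphism $\DD\times\MM\ra\AAA$.

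It then remains to identify the formulas. The first step is to check that $\Sigma(\mu^{-1}(\varphi,f))=\rme^{h_{(\varphi,f)}}\alpha_{\lambda}$. By definition $\Sigma([\alpha])=\rme^{h_{[\alpha]}}\alpha_\lambda$ with $h_{[\alpha]}=h_{(\varphi_\alpha,f_\alpha)}$. So I need that the monodromy of $\rme^{h_{(\varphi,f)}}\alpha_\lambda$ equals $(\varphi,f)$. This is exactly the statement in the proof of Proposition \ref{prop:monodromyhomeo} that $\Psi_{(\varphi,f)}=\Phi_{\rme^h\alpha_\lambda}$ restricts to $(b+f,\varphi)$ on the super action range, which in turn comes from Lemma \ref{lem:contifnotrappedorbitshamiltonianmonod} and Remark \ref{rem:freeactionstrnex}. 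Consequently $h_{[\rme^{h}\alpha_\lambda]}=h_{(\varphi,f)}$, and the composite is the displayed map.

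For the inverse, I will compose $\alpha\mapsto(U_\alpha^{-1},[\alpha])$ with $\id\times\mu$. This gives $\alpha\mapsto(U_\alpha^{-1},(\varphi_\alpha,f_\alpha))$, using that the monodromy is constant on orbits, which holds by the well-definedness part of Proposition \ref{prop:monodromyhomeo}. The base point is handled as follows: $h_{(\id_{V_S},0)}=0$ by Section \ref{subsec:utransform}, so $(\id_{\R\times V},(\id_{V_S},0))\mapsto\alpha_\lambda$.

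The $\xi_\lambda$-variant runs the same way. I will use the $\DD_{\xi_\lambda}$-statement of Theorem \ref{thm:splitting} and the $\VV_{\xi_\lambda}$-statement of Proposition \ref{prop:monodromyhomeo}. I will also note that $\Sigma$ takes values in $\AAA_{\xi_\lambda}$ and that $U_\alpha\in\DD_{\xi_\lambda}$ for $\alpha\in\AAA_{\xi_\lambda}$. The latter holds because $(U_\alpha)_*\alpha=\rme^{h}\alpha_\lambda$ with both forms defining $\xi_\lambda$. The only point needing care, and the main obstacle, is that the section used in the $\xi_\lambda$-variant is the same formula $\rme^{h_{(\varphi,f)}}\alpha_\lambda$ under the identification $\VV_{\xi_\lambda}\cong\MM$. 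Remark \ref{rem:homeoquspaces} together with $\Sigma(\VV)=\Sigma(\VV_{\xi_\lambda})$ from Remark \ref{rem:retraction} settles this.
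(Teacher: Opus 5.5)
Your proposal is correct and follows the paper's proof: you compose the equivariant splitting of Theorem~\ref{thm:splitting} with the monodromy homeomorphism of Proposition~\ref{prop:monodromyhomeo}, and you use that $\Sigma$ sends the class corresponding to $(\varphi,f)$ to $\rme^{h_{(\varphi,f)}}\alpha_{\lambda}$. Your additional checks on the inverse formula, the base point and the $\xi_{\lambda}$-variant just spell out what the paper leaves implicit.
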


\begin{proof}
Under the monodromy homeomorphisms of
Proposition \ref{prop:monodromyhomeo},
the class corresponding to $(\varphi,f)$ is
$[\rme^{h_{(\varphi,f)}}\alpha_{\lambda}]$,
and its image under the section $\Sigma$
is $\rme^{h_{(\varphi,f)}}\alpha_{\lambda}$.
The assertions therefore follow directly from
Theorem \ref{thm:splitting}.
\end{proof}

\begin{cor}[Orbit deformation retracts]
\label{cor:orbitdeformationretracts}
Let $(S,V_S,f_{\pm})$ be a sub- and super-homogeneous
Liouville shape in $(\R\times V,\rmd b+\lambda)$
whose shadow set $(V_S,\rmd\lambda)$
is a Hamiltonian contraction space.
Then the orbits
$\DD\cdot\alpha_{\lambda}$ and
$\DD_{\xi_{\lambda}}\cdot\alpha_{\lambda}$
are strong deformation retracts of
$\AAA$ and $\AAA_{\xi_{\lambda}}$, resp.
In particular, the orbit maps
$\Phi\mapsto\Phi_*\alpha_{\lambda}$
induce homotopy equivalences
  \[
  \DD\simeq\AAA
  \quad\text{and}\quad
  \DD_{\xi_{\lambda}}\!\simeq\AAA_{\xi_{\lambda}}
  \,.
  \]
\end{cor}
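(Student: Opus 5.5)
We transport the question through the $\DD$-equivariant homeomorphism of Corollary~\ref{cor:monodromysplitting},
\[
\DD\times\MM\lra\AAA
\,,
\quad
\bigl(\Phi,(\varphi,f)\bigr)\longmapsto\Phi_*\bigl(\rme^{h_{(\varphi,f)}}\alpha_{\lambda}\bigr)
\,,
\]
which sends $\DD\times\{(\id_{V_S},0)\}$ onto the orbit $\DD\cdot\alpha_{\lambda}$, because $h_{(\id_{V_S},0)}=0$. A strong deformation retraction of $\AAA$ onto the orbit is therefore the same as a strong deformation retraction of $\DD\times\MM$ onto $\DD\times\{(\id_{V_S},0)\}$. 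Such a retraction is supplied by Proposition~\ref{prop:mscontractible}: let $K\co\MM\times[0,1]\ra\MM$ be the strong deformation retraction with $K_0=\id_{\MM}$, $K_1\equiv(\id_{V_S},0)$ and $K_t\bigl((\id_{V_S},0)\bigr)=(\id_{V_S},0)$ for all $t$. We then set
\[
R_t\bigl(\Phi,(\varphi,f)\bigr):=\bigl(\Phi,K_t(\varphi,f)\bigr)
\,.
\]
This is continuous on $\DD\times\MM\times[0,1]$ with respect to the product topology. It satisfies $R_0=\id$, its time-one map has image in $\DD\times\{(\id_{V_S},0)\}$, and it fixes $\DD\times\{(\id_{V_S},0)\}$ pointwise for all $t$. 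Conjugating by the homeomorphism gives the explicit homotopy
\[
\alpha\longmapsto(U_{\alpha}^{-1})_*\bigl(\rme^{h_{K_t(\varphi_\alpha,f_\alpha)}}\alpha_{\lambda}\bigr)
\]
on $\AAA$. Its continuity follows from the continuity of $\alpha\mapsto U_\alpha$, of the monodromy map, of $K$, of $h_{(\,.\,,\,.\,)}$, and of the push-forward action.

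The $\xi_{\lambda}$-case is handled verbatim, using the second homeomorphism $\DD_{\xi_{\lambda}}\times\MM\ra\AAA_{\xi_{\lambda}}$ of Corollary~\ref{cor:monodromysplitting} together with the same $K$. The homotopy stays inside $\AAA_{\xi_{\lambda}}$ because $U_\alpha\in\DD_{\xi_{\lambda}}$ for $\alpha\in\AAA_{\xi_{\lambda}}$ and $\rme^{h}\alpha_\lambda$ defines $\xi_\lambda$.

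For the homotopy equivalences, we first note that the orbit map $\DD\ra\DD\cdot\alpha_\lambda$, $\Phi\mapsto\Phi_*\alpha_\lambda$, is a homeomorphism. It is the restriction of the homeomorphism of Corollary~\ref{cor:monodromysplitting} to the closed slice $\DD\times\{(\id_{V_S},0)\}$; injectivity also follows directly from Lemma~\ref{lem:freeaction}, and its inverse is $\alpha\mapsto U_\alpha^{-1}$. Composing with the inclusion of a strong deformation retract, which is a homotopy equivalence, shows that the orbit map $\DD\ra\AAA$ is a homotopy equivalence. The same argument applies to $\DD_{\xi_{\lambda}}\ra\AAA_{\xi_{\lambda}}$.

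The only point requiring care is the topology on the product $\DD\times\MM$. One has to make sure that the homeomorphism of Corollary~\ref{cor:monodromysplitting} is taken with respect to the product topology, and that $K\times[0,1]$ combined with the identity on $\DD$ is jointly continuous. Since $[0,1]$ is compact, $(\DD\times\MM)\times[0,1]\cong\DD\times(\MM\times[0,1])$ as topological spaces, so this causes no trouble. Everything else is formal.
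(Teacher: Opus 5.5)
Your proposal is correct and matches the paper's proof: you transport the product homotopy $\id_{\DD}\times K$ through the monodromy splitting of Corollary~\ref{cor:monodromysplitting}, where $K$ is the strong deformation retraction of $\MM$ from Proposition~\ref{prop:mscontractible} and the orbit corresponds to $\DD\times\{(\id_{V_S},0)\}$, and you treat the $\xi_{\lambda}$-case the same way. Your check that the orbit map is a homeomorphism onto the orbit spells out the paper's ``in particular''; one small point is that associativity of product topologies holds in general, so the appeal to compactness of $[0,1]$ is not needed.
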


\begin{proof}
Let $K$ be the strong deformation retraction of $\MM$
from Proposition \ref{prop:mscontractible}.
Under the monodromy splitting of
Corollary \ref{cor:monodromysplitting},
the orbit $\DD\cdot\alpha_{\lambda}$
corresponds precisely to
$\DD\times\{(\id_{V_S},0)\}$.
Hence, the product homotopy
$\id_{\DD}\times K$
is a strong deformation retraction of
$\DD\times\MM$ onto
$\DD\times\{(\id_{V_S},0)\}$.
Transporting this homotopy through the
monodromy splitting gives the asserted
strong deformation retraction of $\AAA$.
The argument for $\AAA_{\xi_{\lambda}}$
is identical.
\end{proof}

Examples can be found
in Section \ref{sec:spofcontforms} below.


\section{Hamiltonian monodromy}
\label{sec:hammonodrom}

Consider a vertically convex strict contact manifold $(M,\alpha)$
that is shaped by a strict vertically convex hypersurface
$(S,V_S,f_{\pm})$ in $(\R\times V,\rmd b+\lambda)$
via the gluing map
$\psi\co\big(U,\partial M,\alpha\big)\ra
\big(D_S,S,\rmd b+\lambda\big)$.
Assume that the first compactly supported de Rham cohomology
of the shadow set $V_S$ vanishes.
By \cite[Proposition~9.3.1]{mcdsal17},
the monodromy $\varphi_{\alpha}$ is Hamiltonian
precisely if $\varphi_{\alpha}$ lies in the connected component
of the identity in $\SS$.


\subsection{Vertically convex balls\label{sec:vcballs}}

The group $\SS$ is contractible
(and hence connected) if, for example,
the shadow set $\R^2_S$ is a
simply connected, bounded domain in $\R^2$,
see \cite[Theorem~1.8]{agz22}.
Gromov \cite{grom85} proved that
the group $\SS(\R^4_S)$ of all symplectomorphisms
of $(\R^4,\rmd\bfx\wedge\rmd\bfy)$
that have support in the closure of
an open, bounded, starshaped domain $\R^4_S$
with centre $0$ is contractible,
cf.\ \cite[Theorem~9.5.2]{mcsa12}.
In fact,
the proof given in \cite[Theorem~9.5.2]{mcsa12} shows
that the subgroup $\SS$ in $\SS(\R^4_S)$
taken w.r.t.\ the shadow set $\R^4_S$
is contractible.
Throughout the contraction described in
\cite[Theorem~9.5.2]{mcsa12},
which begins with an Alexander dilation,
the supports of the resulting symplectomorphisms
remain disjoint from the boundary $\partial\R^4_S$.

Equip $V=\R^{2n}$ with the primitive
\[
\lambda_{\st}=\tfrac12(\bfx\rmd\bfy-\bfy\rmd\bfx)
\]
of the standard symplectic form 
$\rmd\bfx\wedge\rmd\bfy$,
so that $\xi_{\st}=\xi_{\lambda_{\st}}$
is the standard contact structure on $\R\times\R^{2n}$.
Consider a shape
$(S,V_S,f_{\pm})$ in $(\R\times\R^{2n},\rmd b+\lambda_{\st})$
with $S$ diffeomorphic
to the unit sphere $S^{2n}=\partial D^{2n+1}$.
By the smooth generalised Sch\"onflies theorem \cite{sm61},
the domain $D_S$ is diffeomorphic to $D^{2n+1}$.
Assume that the shadow set $\R^{2n}_S$ is starshaped
with centre $0$ and require that
$f_{\pm}=0$ along its necessarily smooth boundary
$\partial\R^{2n}_S$.

For $n=1,2$,
the preceding contractibility results,
together with
$H_c^1(\R^{2n}_S)=0$,
show that every monodromy is Hamiltonian.
Consequently,
Theorem \ref{thm:monodromy} implies that
for any vertically convex contact manifold $(M,\xi)$
shaped by $(S,V_S,f_{\pm})$,
the gluing map of $(M,\xi)$ extends to a contactomorphism
$(M,\xi)\ra(D_S,\xi_{\st})$
after possibly restricting to a smaller neighbourhood
$U$ of $\partial M$,
cf.\ \cite[p.\ 1305, Item~1.b)]{eh94} and \cite[Theorem~1]{efz}.
In particular:

\begin{cor}
\label{cor:roundball}
 Let $n=1,2$.
 Consider a vertically convex contact manifold $(M,\xi)$
 shaped by $S^{2n}$ in $(\R\times\R^{2n},\rmd b+\lambda_{\st})$. 
 After possibly restricting to a smaller neighbourhood
 $U$ of $\partial M$,
 the gluing map of $(M,\xi)$ extends to a contactomorphism
 $(M,\xi)\ra(D^{2n+1},\xi_{\st})$.
 \qed
\end{cor}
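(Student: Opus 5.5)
We plan to derive Corollary \ref{cor:roundball} as a direct specialisation of Theorem \ref{thm:monodromy} to the round sphere. The first step is to check that $S^{2n}=\partial D^{2n+1}$ is a Liouville shape in $(\R\times\R^{2n},\rmd b+\lambda_{\st})$. Its shadow set is the open unit ball $B^{2n}$, with
\[
f_{\pm}=\pm\sqrt{1-|\bfx|^2-|\bfy|^2}\,,
\]
so that $f_-<f_+$ on $B^{2n}$ and $f_{\pm}=0$ on $\partial B^{2n}$. The unit sphere is a smooth hypersurface, vertically convex in the strict sense. The closed ball $(D^{2n},\lambda_{\st})$ is a Liouville domain, since the Liouville vector field $Y=\tfrac12(\bfx\partial_{\bfx}+\bfy\partial_{\bfy})$ points outward along $\partial D^{2n}$. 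Hence $S^{2n}$ is a Liouville shape with starshaped shadow set centred at $0$.

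The second step is to show that every vertically convex $(M,\xi)$ shaped by $S^{2n}$ is Hamiltonian in the sense of Section \ref{subsec:shapedcontactmanifolds}. Take any vertically convex $\xi$-defining contact form $\alpha$ with shaped boundary behaviour. Proposition \ref{prop:globalflowbox} then gives the monodromy $\varphi_\alpha\in\SS(B^{2n})$, which is compactly supported and exact. For $n=1$, the group $\SS^2$ is contractible by \cite[Theorem~1.8]{agz22}. For $n=2$, Gromov's theorem in the form of \cite[Theorem~9.5.2]{mcsa12} shows that $\SS^4$ is contractible, with supports kept inside $B^4$ as noted in Section \ref{sec:vcballs}. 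In either case $\varphi_\alpha$ lies in the identity component of $\SS$. Since $H^1_c(B^{2n})=0$, \cite[Proposition~9.3.1]{mcdsal17} then shows that $\varphi_\alpha$ is a compactly supported Hamiltonian diffeomorphism. The Hamiltonians can be chosen with supports in a fixed compact subset of $B^{2n}$, because the isotopy may be taken compactly supported in $B^{2n}$. Thus $(M,\alpha)$, and hence $(M,\xi)$, is Hamiltonian.

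Theorem \ref{thm:monodromy} now yields a contactomorphism $(M,\xi)\ra(D_S,\xi_{\st})=(D^{2n+1},\xi_{\st})$. It agrees with the gluing map $\psi$ on a neighbourhood of $\partial M$, possibly smaller than $U$, and this is exactly the claimed extension after restriction. We expect the only delicate point to be the support issue in the second step. Connectedness of $\SS$ must be understood within the compactly supported group of the open shadow set, not in the group of symplectomorphisms supported in the closed ball, so that Lemma \ref{lem:pi1ofrelemistrivial} applies with some $a_0>0$. For $n=2$ we would settle this by citing the remark in Section \ref{sec:vcballs} that the contraction in \cite{mcsa12} begins with an Alexander dilation and keeps supports away from $\partial\R^4_S$. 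For $n=1$ we would cite \cite{agz22} directly for the open disc.
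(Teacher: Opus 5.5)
Your proposal is correct and follows the paper's own argument. The round sphere is a Liouville shape. Connectedness of $\SS^2$ and $\SS^4$, from \cite{agz22} and Gromov, together with $H^1_c(B^{2n})=0$, makes every monodromy Hamiltonian. Theorem \ref{thm:monodromy} then gives a contactomorphism extending the gluing map, with the support issue handled as in the paper's remarks in Section \ref{sec:vcballs}.
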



\subsection{Spaces of contact forms\label{sec:spofcontforms}}

In fact,
the $0$-centred starshaped shadow sets $\R^{2n}_S$
described in Section \ref{sec:vcballs}
are examples of Hamiltonian contraction spaces
in the sense of Section \ref{subsec:hamcontrspace}
for $n=1,2$.
For simplicity we assume $S=S^{2n}$
as in Corollary \ref{cor:roundball}.

For $n=1$, the required time-$C^1$ property
from item (2) in Section \ref{subsec:hamcontrspace}
follows from the constructions of Smale \cite{sm59}
and \cite[Section~3]{agz22}.
Indeed,
in \cite[Lemma~1]{sm59},
the contraction is built
from a $C^{\infty}$-continuous family of vector fields
which is smooth in the deformation and space variables,
and in \cite[Lemma~2]{sm59} the corresponding diffeomorphisms
are obtained by integrating these vector fields.
The parametric Moser construction used in
the proof of \cite[Theorem~1.8]{agz22} preserves this regularity.
In particular,
the generating vector fields
of the resulting contraction of $\SS^2$
depend continuously on the initial diffeomorphism
in the strong $C^{\infty}$-topology.

For $n=2$,
the contraction of $\SS^4$
obtained from Gromov's proof \cite{grom85}
can be chosen smooth in all variables.
More precisely, every smooth family
of initial symplectomorphisms gives a family
of contraction paths that is smooth jointly
in the family parameter,
the contraction parameter and the space variable.
Indeed,
the relevant space of almost complex structures
admits a global chart onto a convex space,
and linear interpolation in this chart is smooth,
cf.\ \cite[p.~32]{gz23}.
All maps and homotopies used in passing between
the function spaces in Gromov's construction
are smooth as well,
cf.\ \cite[Lemma~9.5.6]{mcsa12},
which is part of the proof of
\cite[Theorem~9.5.2]{mcsa12}.
Together with the support observation in
Section \ref{sec:vcballs},
their finite compositions yield a contraction of $\SS^4$
that is smooth in all variables.
It is therefore time-$C^1$ in the sense of
Section \ref{subsec:hamcontrspace};
in particular, its generating vector fields
depend continuously on the initial symplectomorphism
in the strong $C^\infty$-topology.

It follows that
the homotopy equivalences
$\DD\simeq\AAA$ and
$\DD_{\xi_{\st}}\!\simeq\AAA_{\xi_{\st}}$
hold true for $n=1,2$
by Corollary \ref{cor:orbitdeformationretracts}.

\begin{proof}[{\bf Proof of Theorem \ref{thmintr:homotopytypeofaaa}}]
  The result is based on
  Corollary \ref{cor:orbitdeformationretracts}.
  With the preliminary remarks,
  the claim follows from the proof
  of the Smale conjecture
  by Hatcher \cite{hat83}.
  The necessary adaptations to $D^3$
  and the support conditions used
  to conclude that $\DD\simeq\{\id\}$
  in dimension $3$
  are worked out by Evers in den
  \cite[Hülfssätzen 1.2.2 und 2.1.2]{ev11}.
  The connectedness statement in dimension $5$
  follows analogously from
  $\pi_0(\DD^5)\cong\Gamma_6=0$;
  see Cerf \cite{cer70}
  and Kervaire--Milnor \cite{km63}.
\end{proof}

\begin{lem}[Relative Gray fibration]
\label{lem:relativegrayfibration}
Let $(N,\xi)$ be either $(B^3,\xi_{\st})$ or $(\R^3,\xi_{\st})$,
and denote by $\Diff_c(N)$ the group of compactly supported
diffeomorphisms of $N$
provided with the strong $C^{\infty}$-topology
and by $\operatorname{Cont}_c(N,\xi)$
the subgroup of contactomorphisms.
Consider the orbit $\OO_{\xi}:=\Diff_c(N)\cdot\xi$
of $\xi$ in the space of cooriented contact structures on $N$
that agree with $\xi$ outside a compact set.
We equip this space, and hence $\OO_{\xi}$,
with the strong $C^\infty$-topology
as a space of sections of the bundle
of cooriented hyperplanes in $TN$,
using the convention of
Section \ref{subsec:functionspaces}.
Then the orbit map
\[
o_{\xi}\co
\Diff_c(N)\lra\OO_{\xi}
\,,
\quad
\varphi\longmapsto\varphi_*\xi
\,,
\]
is a locally trivial fibre bundle
and hence a Serre fibration
whose fibre over $\xi$ is
$\operatorname{Cont}_c(N,\xi)$.
\end{lem}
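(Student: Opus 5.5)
The plan is to produce local sections of the orbit map $o_{\xi}$ via a parametric Gray stability argument. Once local sections exist, local triviality follows by the standard homogeneous-space argument. By equivariance it suffices to build a continuous local section $s\co\mathcal W\ra\Diff_c(N)$ on a neighbourhood $\mathcal W$ of $\xi$ in $\OO_{\xi}$ with $s(\xi)=\id$ and $o_{\xi}\circ s=\id_{\mathcal W}$. Translating by the group action, $\mathcal W_{\varphi_*\xi}:=\varphi_*\mathcal W$ carries the section $\eta\mapsto\varphi\circ s(\varphi^{-1}_*\eta)$. The map
\[
o_{\xi}^{-1}(\mathcal W_{\eta_0})\lra\mathcal W_{\eta_0}\times\operatorname{Cont}_c(N,\xi)
\,,\quad
\psi\longmapsto\big(\psi_*\xi,\;s(\psi_*\xi)^{-1}\circ\psi\big)
\,,
\]
then gives the required trivialisation; here the translated section is still written $s$. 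Its inverse is $(\eta,\phi)\mapsto s(\eta)\circ\phi$, and both maps are continuous because composition and inversion are continuous in $\Diff_c(N)$ for the extrinsic strong topology of Section \ref{subsec:functionspaces}. Locally trivial bundles over the metrisable (hence paracompact) base are Serre fibrations, and the fibre over $\xi$ is $\operatorname{Cont}_c(N,\xi)$ by definition.

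To construct $s$, write each $\eta\in\OO_{\xi}$ near $\xi$ as $\ker\beta_\eta$, normalising the cooriented defining form by $\beta_\eta(R_{\alpha_{\st}})=1$. For $\eta$ close to $\xi$ this defines a continuous map $\eta\mapsto\beta_\eta$ with $\beta_\eta-\alpha_{\st}$ compactly supported. Set $\beta_t:=(1-t)\alpha_{\st}+t\beta_\eta$. On $N=\R^3$ the support of $\beta_\eta-\alpha_{\st}$ is compact, and on $N=B^3$ it is compact in $B^3$. In both cases only the jets on a fixed compact set $\bar B$ (or on a large ball containing the support) are involved, so for $\eta$ close to $\xi$ in uniform $C^1$ each $\beta_t$ is contact. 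Gray's method then gives the unique vector field $X_t\in\ker\beta_t$ with $\iota_{X_t}\rmd\beta_t|_{\ker\beta_t}=-\dot\beta_t|_{\ker\beta_t}$. It vanishes where $\dot\beta_t=0$, so it is supported in $\supp(\beta_\eta-\alpha_{\st})$, compact in $N$. Its flow $\varphi^\eta_t$ satisfies $(\varphi_1^\eta)^*\ker\beta_\eta=\xi$, and we set $s(\eta):=\varphi_1^\eta$. Then $s(\xi)=\id$ and $s(\eta)_*\xi=\eta$.

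The main obstacle is continuity of $s$ in the extrinsic strong $C^\infty$-topology, particularly for $N=B^3$, where supports may creep toward $\partial B^3$. The plan is to argue as in Remark \ref{rem:neighbourhoodretract}. The vector fields $X_t$ are obtained algebraically from finite jets of $\beta_\eta$. As $\eta\ra\xi'$, the $r$-jets of $X_t$ converge uniformly on $\bar B^3$, and $X_t=0$ outside $B^3$. Parameter-dependent ODE theory on the compact $\bar B^3$ then gives uniform convergence of all $r$-jets of the flows, which by Section \ref{subsec:functionspaces} is exactly continuity in our topology. For $N=\R^3$ the same argument applies on a neighbourhood of $\xi$. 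Beyond a fixed compact set, strong neighbourhoods impose conditions only where the forms agree with $\alpha_{\st}$, so the sections equal the identity there. The subtlety to check is that the strong topology on a noncompact $N$ may force control of the support, which means $\mathcal W$ must be chosen as a genuinely strong neighbourhood. Smallness of $\beta_\eta-\alpha_{\st}$ in $C^1$ uniformly on all of $N$ ensures $\beta_t$ stays contact everywhere.
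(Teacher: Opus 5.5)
Your proposal is correct and is essentially the paper's proof. A relative Gray isotopy along the convex interpolation of normalised defining forms of $\xi$ and $\eta$ gives a continuous local section $\sigma$ of $o_{\xi}$ near $\xi$ with $\sigma(\xi)=\id$; this yields the trivialisation $\Phi\mapsto\bigl(o_{\xi}(\Phi),\sigma(o_{\xi}(\Phi))^{-1}\circ\Phi\bigr)$ with inverse $(\eta,\Psi)\mapsto\sigma(\eta)\circ\Psi$, transported over the orbit by $\Diff_c(N)$. The only differences are cosmetic: you normalise by $\beta_{\eta}(\partial_b)=1$ where the paper uses unit forms for an auxiliary metric, and your appeal to metrisability of the base is unnecessary (it fails for $N=\R^3$ in the strong topology) because every locally trivial bundle is a Serre fibration.
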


\begin{proof}
Fix a Riemannian metric on $N$.
For every cooriented contact structure $\eta$
sufficiently close to $\xi$,
denote by $\alpha_{\eta}$
the positively cooriented unit $1$-form
with kernel $\eta$ determined by the metric.
The assignment $\eta\mapsto\alpha_{\eta}$
is continuous in the strong $C^\infty$-topology.
After restricting to a sufficiently small
neighbourhood $\UU$ of $\xi$,
the forms
$\alpha_{\eta,t}:=(1-t)\alpha_{\xi}+t\alpha_{\eta}$,
for $t\in[0,1]$,
are contact forms for all $\eta\in\UU$.
They agree with $\alpha_{\xi}$
wherever $\eta=\xi$.
The relative Gray vector field $X_{\eta,t}$
is determined by
$\alpha_{\eta,t}(X_{\eta,t})=0$ and
$\iota_{X_{\eta,t}}\rmd\alpha_{\eta,t}=-\dot\alpha_{\eta,t}$
on $\ker\alpha_{\eta,t}$,
cf.\ \cite[Theorem~2.2.2]{gei08}.
It vanishes wherever $\eta=\xi$
and is therefore compactly supported.
Its associated isotopy $\Phi_{\eta,t}$
exists for $t\in[0,1]$ and satisfies
$\Phi_{\eta,t}^*\ker\alpha_{\eta,t}=\xi$.
The defining equation and
standard continuous dependence
of the associated isotopies
on their time-dependent vector fields show that
$\Phi_{\eta,t}$ depends continuously on
$(\eta,t)$ in the strong $C^\infty$-topology.
Consequently, $\sigma(\eta):=\Phi_{\eta,1}$
defines a continuous local section of $o_{\xi}$
over $\UU$ with $\sigma(\xi)=\id$.
This section induces the local trivialisation
\[
o_{\xi}^{-1}(\UU)
\lra
\UU\times\operatorname{Cont}_c(N,\xi)
\,,
\quad
\Phi\longmapsto
\Big(
o_{\xi}(\Phi),
\sigma\big(o_{\xi}(\Phi)\big)^{-1}\circ\Phi
\Big)
\,.
\]
Its inverse is
\[
(\eta,\Psi)\longmapsto\sigma(\eta)\circ\Psi
\,.
\]
Translating this construction by elements of
$\Diff_c(N)$ gives local trivialisations
over the entire orbit $\OO_{\xi}$.
Hence, $o_{\xi}$ is a locally trivial fibration.
\end{proof}

\begin{proof}[{\bf Proof of Theorem
\ref{thmintr:homotopytypeofaaacont}}]
In view of Corollary \ref{cor:orbitdeformationretracts}
and the preceding remarks,
it remains to prove that
$\DD_{\xi_{\st}}$ is contractible for $n=1$.
Set $\CC:=\operatorname{Cont}_c(\R^3,\xi_{\st})$.
By the convention of Section \ref{subsec:functionspaces},
extension by the identity defines a continuous inclusion
$\DD_{\xi_{\st}}\ra\CC$.
On $\R^3$, the orbit $\OO_{\xi_{\st}}$
is the space of tight contact structures
that are standard at infinity.
It is weakly contractible by
Eliashberg--Mishachev
\cite[Theorem~1.1]{em}.
By Lemma \ref{lem:relativegrayfibration}
and the long exact homotopy sequence,
the inclusion $\CC\ra\Diff_c(\R^3)$
is a weak homotopy equivalence.
Hatcher's theorem \cite{hat83}
therefore implies that $\CC$ is weakly contractible.

Let $k\geq0$ and consider a continuous map
$\gamma\co S^k\ra\DD_{\xi_{\st}}$.
Regarded as a map into $\CC$,
it admits a null-homotopy
$\Gamma\co D^{k+1}\ra\CC$.
By compactness of $D^{k+1}$
and the defining locally finite neighbourhoods
of the strong $C^{\infty}$-topology,
the contactomorphisms in the image of $\Gamma$
have support in a common compact set $K\subset\R^3$.
For the standard primitive $\lambda_{\st}$,
the Liouville flow is complete.
Hence, the contact Alexander dilation from
Section \ref{subsec:alexdilation}
extends by conjugation to $\CC$
for all $t\geq0$.
Choose $a_0>0$ such that
$A_{a_0}\big(\Gamma(D^{k+1})\big)$
is supported in $B^3$.
Then
$A_{a_0}\circ\Gamma\co D^{k+1}\ra\DD_{\xi_{\st}}$
is a null-homotopy of $A_{a_0}\circ\gamma$.
On the other hand,
$A_t\circ\gamma$, for $t\in[0,a_0]$,
is a homotopy inside $\DD_{\xi_{\st}}$
from $\gamma$ to $A_{a_0}\circ\gamma$.
Consequently,
$\gamma$ is null-homotopic in $\DD_{\xi_{\st}}$,
and $\DD_{\xi_{\st}}$ is weakly contractible.

It remains to upgrade this statement.
By Lemma \ref{lem:relativegrayfibration},
the orbit map $o=o_{\xi_{\st}}$ of $\DD$
admits a continuous local section $\sigma$
near $\xi_{\st}$ with $\sigma(\xi_{\st})=\id$.
Let $\UU\subset\DD\cdot\xi_{\st}$
be an open neighbourhood of $\xi_{\st}$
on which the local section $\sigma$ is defined.
Then $o^{-1}(\UU)$ is an open neighbourhood
of $\DD_{\xi_{\st}}$ in $\DD$.
Define
\[
r\co
o^{-1}(\UU)\lra\DD_{\xi_{\st}}
\,,
\quad
\Phi\longmapsto
\sigma\big(o(\Phi)\big)^{-1}\circ\Phi
\,.
\]
This map is continuous,
since $o$, $\sigma$, inversion and composition
are continuous in the strong $C^\infty$-topology.
Setting $\eta:=o(\Phi)$, we obtain
$r(\Phi)_*\xi_{\st}=\sigma(\eta)^*\eta=\xi_{\st}$,
so that $r$ takes values in $\DD_{\xi_{\st}}$.
Moreover, if $\Phi\in\DD_{\xi_{\st}}$, then
$o(\Phi)=\xi_{\st}$ and hence $r(\Phi)=\Phi$.
Thus, $\DD_{\xi_{\st}}$ is a neighbourhood retract
of $\DD$.

The ANR argument used in the proof of
Proposition \ref{prop:homoequiv}
shows that $\DD$ is a metrisable ANR.
Hence, the open subset $o^{-1}(\UU)$ is an ANR,
and so is its retract $\DD_{\xi_{\st}}$.
Thus, $\DD_{\xi_{\st}}$ is a metrisable ANR
and has the homotopy type of a CW complex
by \cite[Theorem~2]{mi59}.
Its weak contractibility and Whitehead's theorem
therefore imply $\DD_{\xi_{\st}}\simeq*$.
\end{proof}


\section{Smooth case}
\label{sec:smoothcase}


\subsection{Odd-symplectic uniqueness
via vertical interpolation\label{subsec:oddsympuniness}}
Let $(M,\alpha)$ be a vertically convex
strict contact manifold
shaped by $(S,V_S,f_{\pm})$
in $(\R\times V,\rmd b+\lambda)$.
In view of Proposition \ref{prop:globalflowbox} (c),
we formulate the following.

\begin{lem}
\label{lem:verticallydiffeomorphic}
Consider a shape $(T,V_T,g_{\pm})$
in $(\R\times V,\rmd b+\lambda)$
equivalent to $(S,V_S,f_{\pm})$
with $g_-=f_-$.
Let $\chi\co\R\ra[0,1]$ be a smooth cut-off function
that is $0$ near $(-\infty,-1]$ and $1$ near $[0,\infty)$.

Then each such $\chi$ intrinsically determines
an odd-symplectomorphism
\[
\Delta_-\equiv
\Delta_{\chi}
\co
\big(\R\times V,D_T,\rmd\lambda\big)
\lra
\big(\R\times V,D_S,\rmd\lambda\big)
\]
of the form
\[
\Delta_-=(\delta,\id)
\qquad\text{with}\qquad
\delta_b>0
\,.
\]
We call $\Delta_\chi$ the
{\bf vertical interpolation diffeomorphism}.
The restriction of $\Delta_-$
to a neighbourhood of $Z_+=\{b\geq g_+(v)\}$
is equal to
 \[
   \Delta_-(b,v)=\Big(b+f_+(v)-g_+(v),v\Big)
 \]
for all $(b,v)$ in a neighbourhood of $Z_+$.
The restriction of $\Delta_-$ to a neighbourhood of
$\End(D_T)\setminus Z_+$ is the identity map.
\end{lem}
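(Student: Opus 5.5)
The plan is to write down $\Delta_\chi$ explicitly as a vertical reparametrisation along each fibre $\R\times\{v\}$, modelled on the cut-off $\tilde\chi$ from the proof of Lemma~\ref{lem:contifnotrappedorbitshamiltonianmonod}. For $(b,v)\in\R\times V_S$ set
\[
\tilde{\chi}(b,v):=\chi\left(\frac{b-g_+(v)}{g_+(v)-f_-(v)}\right)
\]
and $c(v):=f_+(v)-g_+(v)$. Since the shapes are equivalent, $c$ is smooth with compact support in $V_S$. We then define
\[
\Delta_-(b,v):=\big(b+\tilde\chi(b,v)\,c(v),\,v\big)
\]
on $\R\times V_S$, and let $\Delta_-$ be the identity on $\R\times(V\setminus V_S)$. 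Smoothness holds because $\tilde\chi\,c$ vanishes wherever $c=0$, in particular near $\partial V_S$. The map is intrinsic, since it depends only on $\chi$ and the shape data.

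Next we read off the required properties. Because $\Delta_-$ has the form $(\delta,\id)$ and $\rmd\lambda$ only involves $V$-directions, we get $\Delta_-^*\rmd\lambda=\rmd\lambda$ automatically. Near $Z_+$ the argument of $\chi$ is $\geq 0$ up to a small margin, so $\chi=1$ there and $\Delta_-=(b+f_+-g_+,v)$. Below the level $f_-$ the argument is $\leq -1$, because $g_+>f_-$, so $\chi=0$ and $\Delta_-=\id$. Moreover $\Delta_-$ is the identity outside $\R\times\supp c$. Together these give $\Delta_-=\id$ near $\End(D_T)\setminus Z_+$. We also obtain $\Delta_-(D_T)=D_S$: the lower graph $f_-$ is fixed, and the upper graph $g_+$ is sent to $f_+$. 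Both statements hold fibrewise once monotonicity is known.

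The main obstacle is the monotonicity $\delta_b>0$ in the region where $c<0$, i.e.\ where $g_+>f_+$ and the fibre is compressed. A direct computation gives
\[
\delta_b=1+\chi'(\cdot)\,\frac{c}{g_+-f_-}.
\]
If $c\geq0$ this is at least $1$. If $c<0$, we need $\chi'\,|c|<g_+-f_-$, and since $|c|=g_+-f_+<g_+-f_-$ this reduces to $\chi'\le 1$. That bound cannot hold, because $\chi$ rises by $1$ over an interval of length $1$.

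To repair this, I would rescale the denominator. Choose the transition interval so that it ends at $g_+$ and starts at $f_-$, and interpolate the fibre $[f_-,g_+]$ affinely onto $[f_-,f_+]$. Concretely, set $\delta(b,v)=b$ for $b\le f_-$, and $\delta=b+c$ for $b\ge g_+$. In between, take
\[
\delta=b+c\,\chi\!\left(\frac{b-g_+}{g_+-f_-}\right),
\]
where $\chi$ is reshaped in a way determined by the given $\chi$, for instance by the convex combination $s\mapsto(1-\mu)\chi(s)+\mu(s+1)$ for a fixed $\mu\in(0,1)$. This has to be arranged so that the derivative stays bounded by $(g_+-f_-)/|c|>1$ while keeping $\chi'\ge0$. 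Alternatively, one can pass to the inverse: when $c<0$, build the map from $D_S$ to $D_T$, where the stretching direction is automatically monotone, and invert it.

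I expect the hardest part to be making this choice uniformly smooth up to $\partial V_S$, where $g_+-f_-\to0$. There I would use that $c$ vanishes identically near $\partial V_S$, so all ratios are bounded on $\supp c$.
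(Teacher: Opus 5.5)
\paragraph{Where the proposal breaks down.} Your explicit formula $\Delta_-(b,v)=\big(b+\tilde\chi(b,v)\,c(v),v\big)$ gets three things right: the odd-symplectic property, the behaviour near $Z_+$, and the behaviour near $\{b\le f_-\}$. You also correctly diagnose that it need not be a diffeomorphism where $c=f_+-g_+<0$. Fibrewise one needs $\chi'<(g_+-f_-)/|c|$. This ratio is close to $1$ wherever $f_+-f_-$ is small compared with $g_+-f_-$, while $\max\chi'>1$ for every admissible $\chi$. Your claim that $\delta_b\geq1$ for $c\geq0$ also presupposes $\chi'\geq0$, which the lemma does not assume.

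None of your repairs closes this gap.
\begin{itemize}
\item \emph{Reshaped profile.} The profile $(1-\mu)\chi(s)+\mu(s+1)$ has derivative $\mu\neq0$ at $s=-1$ and at $s=0$. After extension by $0$ and $1$ it is therefore not even $C^1$, and it is not locally constant near the endpoints. The resulting $\Delta_-$ is not smooth at $b=f_-$ and $b=g_+$. It is neither the identity on a neighbourhood of $\End(D_T)\setminus Z_+$ nor the translation on a neighbourhood of $Z_+$. These neighbourhood statements are exactly what is used later to glue with $\Phi_\alpha$ in Corollary~\ref{cor:oddsymlifnotrappedorbits} and Proposition~\ref{prop:inwardextension}. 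Moreover $(1-\mu)\max\chi'+\mu>1$, so the derivative bound can only be met with $\mu$ chosen depending on the shapes, and then the map is no longer determined by $\chi$.
\item \emph{Affine interpolation.} Mapping $[f_-,g_+]$ affinely onto $[f_-,f_+]$ has the same defect: its slope $(f_+-f_-)/(g_+-f_-)$ does not match $1$ at the ends.
\item \emph{Passing to the inverse.} This works only where $c$ has a fixed sign. But $c$ is an arbitrary compactly supported function and changes sign in general. Along $\{c=0\}$ the forward formula and the inverse of the backward formula differ by a nonzero term of order $c^2$, so the glued map is not smooth there.
\end{itemize}

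\paragraph{The missing idea.} What makes $\delta_b>0$ automatic for \emph{every} $\chi$ is to produce $\Delta_\chi$ as the time-$1$ map of a flow rather than by a closed formula. The paper starts from the vertical isotopy $\Delta_t(b,v)=\big(b+t(f_+-g_+)(v),v\big)$. It observes that $\Delta_t(T^+)$ is the graph of $g_+^t:=tf_++(1-t)g_+>f_-$. It then integrates the vertical field $X_t:=\tilde\chi_t\,(f_+-g_+)\,\partial_b$ with the moving cut-off $\tilde\chi_t:=\chi\big((b-g_+^t)/(g_+^t-f_-)\big)$.
\begin{itemize}
\item As the time-$1$ map of a complete vertical flow, $\tilde\Delta_1=(\delta,\id)$ is a diffeomorphism preserving each vertical line. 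Hence $\delta_b>0$ with no condition on $\chi'$.
\item Because the cut-off moves with the graph, its argument at $\Delta_t(b,v)$ equals $(b-g_+)/(g_+^t-f_-)$. So $X_t=(f_+-g_+)\partial_b$ on $\Delta_t(N)$ for a neighbourhood $N$ of $Z_+$, and therefore $\tilde\Delta_1=\Delta_1$ on $N$.
\item $X_t$ vanishes for all $t$ on a neighbourhood of $\{b\le f_-\}$ and off $\R\times\supp(f_+-g_+)$. This gives the identity near $\End(D_T)\setminus Z_+$.
\end{itemize}
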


\begin{proof}
Consider the vertical isotopy
 \[
   \Delta_t(b,v)=\Big(b+t\big(f_+(v)-g_+(v)\big),v\Big)
 \]
for all $(b,v)\in\R\times V$ and $t\in[0,1]$.
The hypersurface $\Delta_t(T^+)$
is the graph of the function $g_+^t$ on $V_T$
given by convex interpolation $g_+^t:=tf_++(1-t)g_+$.
Define for all $t\in[0,1]$ and $(b,v)\in\R\times V_T$
\[
\tilde{\chi}_t(b,v):=
\chi\left(\frac{b-g_+^t(v)}{g_+^t(v)-f_-(v)}\right)
\,.
\]
As $g_+^t>f_-$ and
as the support of $f_+-g_+$ is contained in $V_T$
for all $t\in[0,1]$ we obtain a smooth family
of time-dependent vector fields
on $\R\times V$ by setting
\[
X_t:=\tilde{\chi}_t\!\cdot\!\big(f_+-g_+\big)\!\cdot\partial_b
\,.
\]
We get that $X_t$ equals
$\big(f_+-g_+\big)\!\cdot\partial_b$
in a neighbourhood of $\Delta_t(Z_+)$
and $0$ in a neighbourhood of
$\End(D_T)\setminus Z_+$.
Let $\tilde{\Delta}_t$ be the isotopy of $\R\times V$
generated by $X_t$.
As the integral curves are reparametrisations
of vertical lines $\R\times\{*\}$,
the isotopy is vertical, i.e.\ of the form
$\tilde{\Delta}_t=(\delta_t,\id)$, and, hence,
odd-symplectic w.r.t.\ $\rmd\lambda$.
Notice that $\tilde{\Delta}_t=\Delta_t$
in a neighbourhood of $Z_+$
and
$\tilde{\Delta}_t=\id$
in a neighbourhood of $\End(D_T)\setminus Z_+$.
Setting $\Delta_-:=\tilde{\Delta}_1$ yields the claim.
\end{proof}

We remark that the same construction,
without the assumption $g_-=f_-$,
yields a vertical diffeomorphism $\Delta'_-$
that interpolates the respective exit sets of the shapes.
By symmetry, a similar map $\Delta'_+$ for the entrance sets
is defined.
The composition $\Delta'_-\circ\Delta'_+$
with $g_+$ substituted by $g_++(f_--g_-)$
in the construction of $\Delta'_-$
yields a slightly different proof of
\cite[Lemma~4.1.1 and Remark~4.1.2]{efz}.

\begin{cor}[Odd-symplectomorphic]
 \label{cor:oddsymlifnotrappedorbits}
 Let $(M,\alpha)$ be a vertically convex strict contact manifold
 shaped by $(S,V_S,f_{\pm})$ in $(\R\times V,\rmd b+\lambda)$.
 Let $\chi\co\R\ra[0,1]$ be a smooth cut-off function
 that is $0$ near $(-\infty,-1]$ and $1$ near $[0,\infty)$.
 Let $\Phi_{\alpha}$ be the global Darboux chart 
 of $(M,\alpha)$ from Section \ref{subsec:globaldarbouxchart}
 and
 $\Delta_{\chi}$ be the vertical interpolation diffeomorphism
 from Lemma \ref{lem:verticallydiffeomorphic}
 taken w.r.t.\ the action domain of $\Phi_{\alpha}$.
 
 Then each such $\chi$ intrinsically determines
 an odd-symplectomorphism
 \[
  \Phi_{\alpha}\circ\Delta_{\chi}^{-1}\co
  \big(\R\times V,D_S,\rmd\lambda\big)
  \lra
  (\hat{M},M,\rmd\hat{\alpha})
  \]
 that preserves $\big(\!\End(D_S),\rmd\lambda\big)$
 and restricts to the identity on
  \[
  \End(D_S)\setminus
  \big\{(b,v)\in\R\times V_S\mid b\geq f_+(v)\big\}
  \,.
  \]
  For all $(b,v)\in\{b\geq f_+(v)\}$ we have
  \[
  \big(\Phi_{\alpha}\circ\Delta_{\chi}^{-1}\big)(b,v)=
  \Big(
    b+\big(\varphi_{\alpha}^*f_+-f_+\big)(v),\varphi_{\alpha}(v)
  \Big)
  \,.
  \]
In fact, the formula holds
for all $(b,v)$ in a neighbourhood of $\End(D_S)$,
where $\varphi_{\alpha}$ is replaced by $\id$
on a neighbourhood of
$\End(D_S)\setminus\{b\geq f_+(v)\}$.
\end{cor}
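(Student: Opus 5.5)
The plan is to compose the two maps already at hand and then read off the boundary behaviour of the composition region by region. By Proposition \ref{prop:globalflowbox} (c) and Section \ref{subsec:globaldarbouxchart}, the global Darboux chart
\[
\Phi_{\alpha}\co\big(\R\times V,D_{T_\alpha},\rmd b+\lambda\big)\lra(\hat{M},M,\hat{\alpha})
\]
is a strict contactomorphism. In particular, $\Phi_\alpha^*\rmd\hat\alpha=\rmd\lambda$, where $\rmd\lambda$ denotes the pullback to $\R\times V$. The action domain is given by the equivalent shape $T_\alpha$ with $g_-^\alpha=f_-$ and $g_+^\alpha=\varphi_\alpha^*f_+-f_\alpha$. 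Lemma \ref{lem:verticallydiffeomorphic}, applied to $T=T_\alpha$, yields the vertical odd-symplectomorphism $\Delta_\chi=(\delta,\id)$, which sends $D_{T_\alpha}$ to $D_S$; it is determined by $\chi$ alone. Consequently $\Phi_\alpha\circ\Delta_\chi^{-1}$ pulls $\rmd\hat\alpha$ back to $\rmd\lambda$ and sends $D_S$ onto $M$. It is intrinsic because both factors are: $\Phi_\alpha$ comes from the Cauchy problem, and $\Delta_\chi$ is determined by $\chi$ together with $g_+^\alpha$. Since both factors are diffeomorphisms that preserve the respective domains, the composition also maps $\End(D_S)$ onto $\End(D_S)\subset\hat M$.

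Next I would compute the composition near the end, splitting into two regions. On a neighbourhood of $\End(D_{T_\alpha})\setminus Z_\alpha$, both $\Delta_\chi$ and $\Phi_\alpha$ are the identity. Because $\Delta_\chi$ is vertical with $\delta_b>0$ and equals the identity there, it maps this region onto a neighbourhood of $\End(D_S)\setminus\{b\geq f_+\}$, so the composition is the identity on that set. On a neighbourhood of $Z_\alpha$ we have
\[
\Delta_\chi(b,v)=\big(b+f_+(v)-g_+^\alpha(v),v\big),
\]
so $\Delta_\chi^{-1}(b,v)=\big(b-f_+(v)+g_+^\alpha(v),v\big)$ near $\{b\geq f_+\}=\Delta_\chi(Z_\alpha)$. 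Composing with $\Phi_\alpha=(b+f_\alpha,\varphi_\alpha)$ gives first component
\[
b-f_++g_+^\alpha+f_\alpha=b-f_++\varphi_\alpha^*f_+ ,
\]
using $g_+^\alpha=\varphi_\alpha^*f_+-f_\alpha$, and second component $\varphi_\alpha(v)$. This is exactly the displayed formula.

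Finally, the "in fact" statement follows by combining the two neighbourhoods. On the neighbourhood of $\End(D_S)\setminus\{b\geq f_+\}$, the formula with $\varphi_\alpha$ replaced by $\id$ reads $(b,v)\mapsto(b,v)$, which matches the first region. Near the equator the two descriptions overlap consistently, because $\varphi_\alpha=\id$ and $f_\alpha=0$ close to $\partial V_S$, as stated in Proposition \ref{prop:globalflowbox} (c). The one point I expect to need care is this overlap bookkeeping. One must check that the neighbourhoods provided by Lemma \ref{lem:verticallydiffeomorphic} and Proposition \ref{prop:globalflowbox} (c) are carried by $\Delta_\chi$ to neighbourhoods of the correct subsets of $\End(D_S)$. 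This should follow because $\Delta_\chi$ is a diffeomorphism with $\Delta_\chi(Z_\alpha)=\{b\geq f_+\}$ and $\Delta_\chi$ equal to the identity off $D_{T_\alpha}\cup Z_\alpha$.
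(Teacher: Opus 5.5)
Your proposal is correct and follows the paper's own argument: compose the global Darboux chart from Proposition~\ref{prop:globalflowbox}~(c) with the inverse of the vertical interpolation diffeomorphism from Lemma~\ref{lem:verticallydiffeomorphic}, then read off the formula on the two end regions. You also write out the computation $b-f_++g_+^{\alpha}+f_{\alpha}=b+\varphi_{\alpha}^*f_+-f_+$, which the paper's one-line proof leaves implicit.
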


\begin{proof}
The statement follows by composing
the diffeomorphisms from
Proposition \ref{prop:globalflowbox} (c) and
Lemma \ref{lem:verticallydiffeomorphic}
as indicated.
\end{proof}

Consider the expression for
$\big(\Phi_{\alpha}\circ\Delta_{\chi}^{-1}\big)(b,v)$
for $(b,v)$ in a neighbourhood of $\{b\geq f_+(v)\}$.
Observe that the map
\[
\Phi_{\alpha}\circ\Delta_{\chi}^{-1}
=
\pi^{-1}\circ(\id\times\varphi_{\alpha})\circ\pi
\]
is the conjugation of
\[
(\id\times\varphi_{\alpha})(b,v)=\big(b,\varphi_{\alpha}(v)\big)
\]
by
\[
\pi(b,v):=\big(b-f_+(v),v\big)
\,.
\]
Here,
the map $\id\times\varphi_{\alpha}$
is defined on a neighbourhood of $[-b_0,\infty)\times V_S$
for a sufficiently small positive real number $b_0$.

\begin{prop}[Inward smooth gluing map extension]
\label{prop:inwardextension}
 Let $(M,\alpha)$ be a vertically convex strict contact manifold
 shaped by $(S,V_S,f_{\pm})$ in $(\R\times V,\rmd b+\lambda)$.
 Assume that the monodromy $\varphi_{\alpha}$ of $(M,\alpha)$
 is smoothly isotopic to the identity
 via diffeomorphisms $\varphi_t$
 with compact support in $V_S$.
 
 Then the gluing map $\psi$ of $(M,\alpha)$
 extends to a diffeomorphism $M\ra D_S$
 after restriction to a suitable neighbourhood
 of $\partial M$ if necessary.
 Conversely,
 if $\bar{V}_S$ is a smooth
 simply connected manifold with boundary and $n\geq3$,
 the existence of such an extension implies
 that $\varphi_{\alpha}$ is smoothly isotopic to the identity
 via diffeomorphisms with compact support in $V_S$.
\end{prop}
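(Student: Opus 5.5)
Both directions rest on the odd-symplectomorphism $\Phi_{\alpha}\circ\Delta_{\chi}^{-1}$ of Corollary \ref{cor:oddsymlifnotrappedorbits}. It is a diffeomorphism $(\R\times V,D_S)\ra(\hat{M},M)$ that is the identity near $\End(D_S)\setminus\{b\geq f_+\}$. Near $\{b\geq f_+\}$ it is the conjugate $\pi^{-1}\circ(\id\times\varphi_{\alpha})\circ\pi$. Its inverse $G:=\Delta_\chi\circ\Phi_{\alpha}^{-1}\co M\ra D_S$ is therefore a diffeomorphism which, near $\partial M$, differs from the gluing map $\psi$ only by a twist along the exit set. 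In the collar coordinates $\pi$, with $b$ ranging over a neighbourhood of $[-b_0,0]$, this twist reads $(b,v)\mapsto(b,\varphi_{\alpha}^{-1}(v))$. The task is to untwist this collar map.

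\textbf{Forward direction.} Given the compactly supported isotopy $\varphi_t$ from $\varphi_0=\id$ to $\varphi_1=\varphi_{\alpha}$, reparametrise it by a smooth $\tau$ that is constant near the endpoints. Choose a cut-off $\rho\co\R\ra[0,1]$ with $\rho=0$ for $b\leq-b_0$ and $\rho=1$ for $b\geq-b_0/2$, and set
\[
\Theta(b,v):=\big(b,\varphi_{\tau(\rho(b))}(v)\big)
\quad\text{for } b\geq-b_0\,,
\]
with $\Theta=\id$ elsewhere. The key point is that $\Theta$ is a diffeomorphism, because it is fibrewise in $b$. It equals $\id$ for $b\leq-b_0$ and $\id\times\varphi_{\alpha}$ for $b\geq-b_0/2$. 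Since all $\varphi_t$ are supported in $V_S$, it is also the identity near $\R\times(V\setminus V_S)$.

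Conjugating by $\pi$ gives $\hat\Theta:=\pi^{-1}\circ\Theta\circ\pi$. Compose $G$ with $\hat\Theta$, or with its inverse according to orientation conventions. The composite $D_S$-valued map equals $\psi$ on a thinner neighbourhood of $\partial M$, namely near the exit set where $b\geq-b_0/2$ after $\pi$, and near the entrance set, where everything was already the identity. The composite is a diffeomorphism $M\ra D_S$ extending the restricted gluing map.

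The main technical check in this direction is the domain bookkeeping. One must verify that $\hat\Theta$ preserves $D_S$, i.e.\ maps the strip $\{f_+-b_0\leq b\leq f_+\}$ into itself. This holds because $\pi$ straightens $S^+$ to $\{0\}\times V_S$ and $\Theta$ preserves each level $\{b\}\times V$. One must also verify that the strip stays above $S^-$ near the equator. There the $\varphi_t$ are the identity, since their supports are compact in $V_S$, so $\hat\Theta$ is the identity and no issue arises.

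\textbf{Converse.} Assume $\bar V_S$ is simply connected, $n\geq3$, and an extension $\Psi\co M\ra D_S$ of $\psi$ exists. Then $\Psi\circ(\Phi_{\alpha}\circ\Delta_\chi^{-1})$ is a diffeomorphism of $D_S$. It is the identity near $\End(D_S)$, away from the exit collar, and equals $\pi^{-1}(\id\times\varphi_{\alpha})\pi$ near $S^+$.

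\emph{Step 1.} Push this to a statement about $\bar V_S\times[0,1]$. The domain $D_S$, cut along a collar of $S^+$, is a product $[0,1]\times\bar V_S$ via the vertical flow, and the equator corresponds to $\partial\bar V_S\times[0,1]$. We obtain a diffeomorphism of $\bar V_S\times[0,1]$ that is the identity on $\bar V_S\times\{0\}$ and near $\partial\bar V_S\times[0,1]$, and restricts to $\varphi_{\alpha}$ on $\bar V_S\times\{1\}$. This is a pseudo-isotopy from $\id$ to $\varphi_{\alpha}$, relative to the boundary.

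\emph{Step 2.} Apply Cerf's pseudo-isotopy theorem: for simply connected compact manifolds of dimension $2n\geq6$, pseudo-isotopy implies isotopy, relative to the boundary. This yields an isotopy from $\id$ to $\varphi_{\alpha}$ supported in $V_S$ (after shrinking the support).

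Getting the precise product structure in Step 1 is the main obstacle. Near the equator, $D_S$ is not literally a product, so I would first apply Lemma \ref{lem:verticallydiffeomorphic}-type vertical isotopies to make the diffeomorphism the identity outside a compact subset of $\R\times V_S$, and only then apply Cerf.
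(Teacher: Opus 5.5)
Your forward direction is the paper's construction. The paper also conjugates $(b,v)\mapsto(b,\varphi_{\tau(b)}(v))$ by $\pi$ and composes the result with $\Delta_{\chi}\circ\Phi_{\alpha}^{-1}$. The correct composite is $\hat{\Theta}\circ G$, so you do not need to hedge about the inverse.

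Your converse is correct but takes a different route from the paper.

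\emph{Your route.} You work on $D_S$ itself with $F:=\Psi\circ\Phi_{\alpha}\circ\Delta_{\chi}^{-1}$. Here $\Delta_{\chi}$ absorbs the action difference, so near $S^+$ the map is the conjugate $\pi^{-1}\circ(\id\times\varphi_{\alpha})\circ\pi$. This preserves the levels of $b-f_+$ and maps the exit set to itself.

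\emph{The paper's route.} The paper passes to the completion. It extends the given extension by the identity over the attached ends to a diffeomorphism $\hat{M}\to\R\times V$ and composes with $\Phi_{\alpha}$. The result is a diffeomorphism of the genuine cylinder $\R\times V_S$. It is the identity for $b\ll0$ and near $\R\times\partial V_S$, and equals $(b+f_{\alpha},\varphi_{\alpha})$ for $b\gg0$. The translation is removed by $\Lambda(b,v)=(b-\rho(b)h(v),v)$ with $h=f_{\alpha}\circ\varphi_{\alpha}^{-1}$. Truncating to $[-B,B]\times\bar{V}_S$ then gives a pseudo-isotopy of $\bar{V}_S$ itself.

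\emph{Comparison.} The paper's route never meets the degenerate fibres of $D_S$. Yours keeps both directions inside the same $\pi$-framework, but it needs a shrink-and-straighten step, and your Step~1 misstates it. $D_S$ is not $[0,1]\times\bar{V}_S$: the vertical fibres collapse to points over $\partial V_S$. The equator is a copy of $\partial V_S$, not $\partial\bar{V}_S\times[0,1]$.

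\emph{Fixing Step~1.} No vertical isotopies are needed.
Since $\Psi=\psi$ near $\partial M$ and $\Phi_{\alpha}\circ\Delta_{\chi}^{-1}$ is the identity near the equator, $F$ is already the identity on $D_S\cap(\R\times(\bar{V}_S\setminus K))$ for some compact $K\subset V_S$. Let $K'\supset K$ be the complement of a thin open collar of $\partial\bar{V}_S$. Then $K'\cong\bar{V}_S$ is simply connected. Because $F$ fixes the region over $\bar{V}_S\setminus K$ pointwise, it preserves $D_S\cap(\R\times K')$. The substitution $t=(b-f_-)/(f_+-f_-)$ identifies this set with $[0,1]\times K'$. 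Under this identification $F$ becomes a pseudo-isotopy of $K'$ from $\id$ to $\varphi_{\alpha}$, relative to a neighbourhood of $\partial K'$. Cerf's theorem applies to it, and the resulting isotopy extends by the identity.
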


\begin{proof}
We continue the considerations
from Corollary \ref{cor:oddsymlifnotrappedorbits}.
Choose a smooth cut-off function $\tau$
that vanishes near $(-\infty,-b_0]$ and equals $1$ near $[0,\infty)$.
Replace $\id\times\varphi_{\alpha}$
by $(b,v)\mapsto\big(b,\varphi_{\tau(b)}(v)\big)$
in the above conjugation expression.
This defines a diffeomorphism
\[
\tilde{\Psi}\equiv\Psi^{\tau}
\]
of $\big(\R\times V,D_S\big)$
that coincides with $\Phi_{\alpha}\circ\Delta_{\chi}^{-1}$
in a neighbourhood of $\End(D_S)$.
Define $\Psi:=\Psi^{\tau}\circ\Delta_{\chi}$.
On a neighbourhood of $Z_+$, the map $\Psi$ is given by
\[
\begin{aligned}
\Psi
&=
\big(b+f_+,\id_V\!\big)
\circ
\big(b,\varphi_{\tau(b)}\big)
\circ
\big(b-g_+,\id_V\!\big)
\,.
\end{aligned}
\]
Indeed, on this neighbourhood,
$\Delta_{\chi}=\big(b+f_+-g_+,\id_V\!\big)$.
Thus, $\Psi$ agrees with $\Phi_{\alpha}$
near $Z_+$ and is the identity near
$\End(D_T)\setminus Z_+$.

Then
$\Psi\circ\Phi_{\alpha}^{-1}\co(\hat{M},M)\ra\big(\R\times V,D_S\big)$
is a diffeomorphism
equal to the identity in a neighbourhood of $\End(D_S)$.
After restriction to a smaller neighbourhood of the boundary,
$\Psi\circ\Phi_{\alpha}^{-1}$ is an extension
of the gluing map $\psi$ of $(M,\alpha)$
to a diffeomorphism $M\ra D_S$.

Conversely, assume these additional hypotheses.
Let $\Theta\co M\ra D_S$ be an extension of the gluing map.
Extending $\Theta$ over the attached ends by the identity
gives a diffeomorphism
$\hat{\Theta}\co\hat{M}\ra\R\times V$.
The composition
$\hat{\Theta}\circ\Phi_{\alpha}$
restricts to a diffeomorphism of $\R\times V_S$.
It is the identity for sufficiently negative $b$
and on a neighbourhood of $\R\times\partial V_S$.
For sufficiently positive $b$, it satisfies
\[
\hat{\Theta}\circ\Phi_{\alpha}(b,v)=
\big(b+f_{\alpha}(v),\varphi_{\alpha}(v)\big)
\,.
\]
This is the cylindrical description of a pseudo-isotopy
from \cite[Section~9.10]{ce12},
here relative to a neighbourhood of $\partial V_S$.

To remove the vertical translation,
set $h:=f_{\alpha}\circ\varphi_{\alpha}^{-1}$.
Choose a smooth cut-off function $\rho\co\R\ra[0,1]$
that is $0$ for sufficiently negative $b$
and $1$ for sufficiently positive $b$,
with $\|\rho'\|_{\infty}\|h\|_{\infty}<1$.
Then $\Lambda(b,v):=\big(b-\rho(b)h(v),v\big)$
is a diffeomorphism of $\R\times V_S$.
The composition $\Lambda\circ\hat{\Theta}\circ\Phi_{\alpha}$
is the identity near the negative and vertical ends
and equals $(b,v)\mapsto(b,\varphi_{\alpha}(v))$
near the positive end.
Restricting to $[-B,B]\times\bar{V}_S$
for sufficiently large $B>0$
and rescaling its first coordinate
therefore gives a pseudo-isotopy of $\bar{V}_S$
from the identity to $\varphi_{\alpha}$,
relative to a neighbourhood of $\partial V_S$.
Since $\bar{V}_S$ is simply connected
and $\dim V_S=2n\geq6$,
the relative form of Cerf's pseudo-isotopy theorem
\cite{cer70}, cf.\ \cite[Introduction]{kra22},
implies that $\varphi_{\alpha}$
is smoothly isotopic to the identity
relative to a neighbourhood of $\partial V_S$.
Restricting this isotopy to $V_S$
gives the required compactly supported isotopy.
\end{proof}

\begin{rem}[Odd-symplectic obstruction]
\label{rem:oddsympobstr}
The extension from Proposition \ref{prop:inwardextension}
is not odd-symplectic in general.
Indeed,
every odd-symplectomorphism of
$(\R\times V,\rmd\lambda)$ preserves
$\ker\rmd\lambda=\R\partial_b$.
For $\hat{\Psi}(b,v):=\bigl(b,\varphi_{\tau(b)}(v)\bigr)$
we have
\[
\hat{\Psi}_*\partial_b=
\partial_b+\tau'(b)\dot\varphi_{\tau(b)}(v)
\,.
\]
Thus, $\hat{\Psi}$ can be odd-symplectic only if
$\dot\varphi_{\tau(b)}=0$ whenever $\tau'(b)\neq0$.
In particular, this construction cannot interpolate
odd-symplectically to a non-trivial monodromy.
Conjugation by $\pi$ does not remove this obstruction,
since $\pi^*\rmd\lambda=\rmd\lambda$.

In fact, any odd-symplectic extension
$\Theta\co(M,\rmd\alpha)\ra(D_S,\rmd\lambda)$
of the gluing map forces $\varphi_{\alpha}=\id_V$.
Indeed, extending $\Theta$ over the attached ends
by the identity gives an odd-symplectomorphism
$\hat{\Theta}\co(\hat{M},\rmd\hat{\alpha})
\ra(\R\times V,\rmd\lambda)$.
The composition $\hat{\Theta}\circ\Phi_{\alpha}$
preserves the vertical line field
and is the identity near the negative end.
It therefore preserves each vertical line
$\R\times\{v\}$.
Near the positive end, this composition is given by
$(b,v)\mapsto(b+f_{\alpha}(v),\varphi_{\alpha}(v))$,
which implies $\varphi_{\alpha}=\id_V$.
\end{rem}

\begin{rem}
\label{rem:exotic}
If $S$ is diffeomorphic to the unit sphere
$S^{2n}=\partial D^{2n+1}$ in $\R\times\R^{2n}$,
so that, by the smooth Sch\"onflies theorem \cite{sm61}
in the dimensions under consideration,
$D_S$ is diffeomorphic to the unit disc $D^{2n+1}$,
a homeomorphic extension $\tilde{\Psi}$
is obtained with Alexander's trick.
If the group $\Gamma_{2n+1}$ is trivial,
a smooth extension $\tilde{\Psi}$ exists.
Using the uniqueness theorem for collar neighbourhoods,
the restriction of
$\Phi_{\alpha}\circ\Delta_{\chi}^{-1}$
to $\End(D_S)$ can be smoothly glued
to a diffeomorphism of $D_S$
whose restriction to the boundary $S$
agrees with
$\Phi_{\alpha}\circ\Delta_{\chi}^{-1}$.
\end{rem}

\begin{rem}
\label{rem:alternative}
In all dimensions,
the gluing map $\psi$
of a vertically convex contact manifold $(M,\xi)$
shaped by $S^{2n}$,
after a restriction to a smaller neighbourhood $U$ if necessary,
extends to a diffeomorphism $M\ra D^{2n+1}$
up to connected sum with a homotopy sphere,
see Remark \ref{rem:exotic}.
Whenever the group $\Gamma_{2n+1}$ 
of twisted oriented $(2n+1)$-spheres is trivial,
the push-forward of $\xi$
is a contact structure $\xi_{D}$ on $D^{2n+1}$
that coincides with $\xi_{\lambda}$ near the boundary.
If $\xi_{\lambda}=\xi_{\st}$,
the pushed-forward contact structure is tight
by Proposition \ref{prop:globalflowbox}.
By Eliashberg's classification \cite[Theorem~4.10.1(b)]{gei08}
(see \cite[Theorem~2.1.3]{elia92})
of tight contact structures on the $3$-ball,
the contact structure $\xi_{D}$ for $n=1$
is isotopic relative to a neighbourhood of $\partial D^3$
to $\xi_{\st}$.
Therefore, the gluing map $\psi$
extends to a contactomorphism
\[
(M,\xi)\lra(D^3,\xi_{\st})
\,.
\]
Corollary \ref{cor:roundball}
gives an alternative proof of the statement in
\cite[p.\ 1305, Item~1.b)]{eh94}.
\end{rem}


\subsection{Vertically convex vector fields\label{subsec:vertconvectfds}}

Vertical convexity of a contact form $\alpha$
is a property of its Reeb vector field $R_{\alpha}$.
If we ignore the contact form,
consider only {\it smooth} vector fields $X$,
and replace $\rmd b+\lambda$ by $\partial_b$
in the boundary condition in the definitions of 
Section \ref{subsec:shapedcontactmanifolds},
we obtain the analogous notion of
{\bf vertically convex vector fields} $X$
on manifolds $M$ shaped by $(S,V_S,f_{\pm})$
in $(\R\times V,\partial_b)$,
cf.\ \cite{buck26}.
No further structure on the
$m$-dimensional smooth manifold $V$ is assumed.

Vertical convexity of $(M,X)$
can be characterised by the existence
of a uniquely determined global flow-box chart,
the {\bf smooth} $\Phi$-{\bf transform},
\[
\Phi_X\co
\big(\R\times V,D_{T_X},\partial_b\big)
\lra
(\hat{M},M,\hat{X})
\,.
\]
Similarly to the contact case,
$\Phi_X$ satisfies the smooth properties
stated in Proposition \ref{prop:globalflowbox}.
In particular, $\Phi_X$ restricts to
\[
(b+f,\varphi)=
\big(b+f_+,\id_V\!\big)
\circ
(b,\varphi)
\circ
\big(b-g_+,\id_V\!\big)
\]
on $Z_+=Z_X$,
where the monodromy $\varphi=\varphi_X$ is an element
of the compactly supported diffeomorphism group
$\DD(V_S)$ of $V_S$.
Notice that $f=f_X$ is not determined by $\varphi_X$.
It records the {\bf flow-time}
\[
0<
\ell_X:=g_+-g_-=
(\varphi_X^*f_+-f_-)-f_X
\]
of the parametrised flow of $X$,
where $g_-=f_-$.

Denote by $\VV(S)^{\infty}$
the resulting {\bf smooth vertical category},
whose morphisms are diffeomorphisms
that send one vertical vector field to the other
and factorise the gluing maps.
Since morphisms in $\VV(S)^{\infty}$
conjugate the parametrised flows
and factorise the gluing maps,
the function $\ell_X$, and hence $f_X$,
is invariant under these morphisms.
The {\bf smooth monodromy space} $\MM^{\infty}(S)$
consists of all $(\varphi,f)\in\FF$
such that the shape inequality $f_-<\varphi^*f_+-f$ holds.
In contrast to the contact case,
no symplectic relation between $\varphi$ and $f$
is imposed.
The proof of Proposition \ref{prop:monodromymap}
and the smooth part of
Remark \ref{rem:freeactionstrnex}
give the analogue of the monodromy bijection
\[
\VV(S)^{\infty}\lra\MM^{\infty}(S)
\,,
\quad
[M,X]\longmapsto(\varphi_X,f_X)
\,.
\]

The proof of
Lemma \ref{lem:openshapeinequality}
applies verbatim to $\FF$ and
shows that $\MM^{\infty}(S)$ is open in $\FF$.
Restricting the projection of
$\FF=\DD(V_S)\times C_c^{\infty}(V_S)$
onto the first factor gives
\[
P\co
\MM^{\infty}(S)\lra\DD(V_S)
\,,
\quad
(\varphi,f)\longmapsto\varphi
\,.
\]
Its fibres $P^{-1}(\varphi)$
are convex and contain
$\big(\varphi,\varphi^*f_+-f_+\big)$
for every $\varphi\in\DD(V_S)$.
Notice, however, that the resulting
set-theoretic section
$\varphi\mapsto\big(\varphi,\varphi^*f_+-f_+\big)$
is continuous with respect to the strong $C^0$-topology,
but need not be continuous with respect to
the strong $C^{\infty}$-topology.
Because $\MM^{\infty}(S)$ is open in $\FF$,
for every $\varphi\in\DD(V_S)$
there exist a neighbourhood
$U_{\varphi}\subset\DD(V_S)$
and a function $f_{\varphi}\in C_c^\infty(V_S)$
such that $(\phi,f_{\varphi})\in\MM^{\infty}(S)$
for all $\phi\in U_{\varphi}$.
Since $\DD(V_S)$ is metrisable and hence paracompact,
there exist a locally finite refinement
$\{U_i\}$ of the open covering $\{U_{\varphi}\}$
and a continuous partition of unity
$\{\rho_i\}$ subordinate to $\{U_i\}$,
so that $\supp(\rho_i)\subset U_i$.
For every $i$, choose $\varphi_i$ such that
$U_i\subset U_{\varphi_i}$ and set
$f_i:=f_{\varphi_i}$.
If $\rho_i(\psi)\neq0$, then $(\psi,f_i)\in P^{-1}(\psi)$.
Local finiteness and convexity of the fibres therefore imply
that $z(\psi):=\sum_i\rho_i(\psi)f_i$
defines a continuous global section
\[
Z\co\DD(V_S)\lra\MM^{\infty}(S)
\,,
\quad
\varphi\longmapsto
\big(\varphi,z(\varphi)\big)
\,,
\]
of $P$.
Taking $f_{\id_{V_S}}=0$
and choosing the partition of unity
such that the corresponding function equals $1$
at $\id_{V_S}$,
we may assume that
$Z(\id_{V_S})=(\id_{V_S},0)$.
Convexity of the fibres implies that
$R_t(\varphi,f):=\big(\varphi,(1-t)f+t\,z(\varphi)\big)$
defines a time-$C^1$ strong deformation retraction
of $\MM^{\infty}(S)$ onto $Z\big(\DD(V_S)\big)$.
Indeed,
$\partial_tR_t(\varphi,f)=\big(0,z(\varphi)-f\big)$
depends continuously on $(\varphi,f,t)$.
Consequently,
\[
\MM^{\infty}(S)\simeq\DD(V_S)
\,.
\]

Using the vertical interpolation diffeomorphism
\[
\Delta_{\chi}=(\delta,\id)
\co
\big(\R\times V,D_{T_X},\partial_b\big)
\lra
\big(\R\times V,D_S,\delta_b\partial_b\big)
\]
from Lemma \ref{lem:verticallydiffeomorphic},
where $\delta=b+(f_+-g_+)$ on a neighbourhood of $Z_X$
and $\delta_b>0$ everywhere,
the odd-symplectic uniqueness statement from
Proposition \ref{prop:oddsymplnotnecext}
transfers to smooth uniqueness.
Namely, as in Corollary \ref{cor:oddsymlifnotrappedorbits},
there exists a diffeomorphism
  \[
  \Phi_X\circ\Delta_{\chi}^{-1}\co
  \big(\R\times V,D_S,\delta_b\partial_b\big)
  \lra
  (\hat{M},M,\hat{X})
  \,,
  \]
which restricts to the conjugation
$
\big(b+f_+,\id_V\!\big)
\circ
(b,\varphi_X)
\circ
\big(b-f_+,\id_V\!\big)
$
on a neighbourhood of $\{b\geq f_+\}$.
In order to obtain extendability
of restrictions of gluing maps,
Theorem \ref{thm:monodromy} can be replaced
by the construction from
Proposition \ref{prop:inwardextension}
whenever the monodromy is smoothly isotopic to the identity.
Recall $\pi(b,v)=\big(b-f_+(v),v\big)$
and define
\[
\hat{\Psi}_{\varphi}(b,v)
:=
\big(b,\varphi_{\tau(b)}(v)\big)
\,,
\]
cf.\ Remark \ref{rem:oddsympobstr}.
As in the proof of Proposition \ref{prop:inwardextension},
the conjugation
\[
\tilde{\Psi}_{\varphi}:=
\pi^{-1}\circ\hat{\Psi}_{\varphi}\circ\pi
\]
is a diffeomorphism of
$\big(\R\times V,D_S\big)$.

For an abstract smooth monodromy pair
$(\varphi,f)\in\MM^{\infty}(S)$,
denote by $T=T_{(\varphi,f)}$
the shape equivalent to $S$ determined by
$g_-:=f_-$ and
$g_+:=\varphi^*f_+-f$.
For the monodromy pair $(\varphi_X,f_X)$ of $X$,
we have $T_{(\varphi_X,f_X)}=T_X$.
Composing $\tilde{\Psi}_{\varphi}$
with the vertical interpolation diffeomorphism
associated with $T$
defines the {\bf smooth} $\Psi$-{\bf transform}
\[
\Psi_{(\varphi,f)}
:=
\tilde{\Psi}_{\varphi}\circ\Delta_{\chi}
\co
\big(\R\times V,D_T,\partial_b\big)
\lra
\big(\R\times V,D_S,X_{(\varphi,f)}\big)
\,,
\]
where
\[
X_{(\varphi,f)}
:=
\big(\Psi_{(\varphi,f)}\big)_*\partial_b
\,.
\]
On a neighbourhood of $Z=\{b\geq g_+\}$,
where $\Delta_{\chi}=(b+f_+-g_+,\id_V)$,
the map $\Psi_{(\varphi,f)}$ is given by
\[
\Psi_{(\varphi,f)}=
\big(b+f_+,\id_V\!\big)
\circ
\big(b,\varphi_{\tau(b)}\big)
\circ
\big(b-g_+,\id_V\!\big)
\,.
\]
Since $\tau=1$ on a neighbourhood of $[0,\infty)$,
it restricts to $(b+f,\varphi)$ on $Z$.
Near $\End(D_T)\setminus Z$
the map $\Psi_{(\varphi,f)}$ is the identity.
Consequently, for the monodromy pair
$(\varphi_X,f_X)$ of $X$,
the desired gluing-map-factorising diffeomorphism is
\[
\Psi_{(\varphi_X,f_X)}\circ\Phi_X^{-1}\co
(\hat{M},M,\hat{X})
\lra
\big(
\R\times V,
D_S,
X_{(\varphi_X,f_X)}
\big)
\,.
\]

Once the cut-off functions $\chi$
(from Lemma \ref{lem:verticallydiffeomorphic})
and
$\tau$
(from the proof of Proposition \ref{prop:inwardextension})
are fixed,
the map $\Psi_{(\varphi,f)}$ is determined by
the chosen isotopy $\varphi_t$ and the action domain
given by $g_+=\varphi^*f_+-f$.
The smooth variant of item (2) at the beginning of
Section \ref{subsec:hamcontrspace} 
yields the notion of a {\bf smooth contraction space} $V_S$.
For a smooth contraction space $V_S$,
a time-$C^1$ contraction of $\DD(V_S)$
provides a canonical and continuous choice
of the isotopies $\varphi_t$ similarly to
Section \ref{subsec:hamcontrspace}.
Consequently, the construction defines a continuous map
\[
\Psi_{(\,.\,,\,.\,)}
\co
\MM^{\infty}(S)
\lra
\Diff(\R\times V)
\,,
\]
such that $\Psi_{(\id_{V_S},0)}=\id_{\R\times V}$.
Here, the group of diffeomorphisms
$\Diff(\R\times V)$ of $\R\times V$
is equipped with the compact-open
$C^\infty$-topology.
Remark \ref{rem:freeactionstrnex} implies that
\[
\Psi_{(\varphi,f)}
=
\Phi_{X_{(\varphi,f)}}
\,.
\]
For $X$ with smooth monodromy $(\varphi_X,f_X)$,
we call
\[
U_X
:=
\Psi_{(\varphi_X,f_X)}\circ\Phi_X^{-1}
\]
the {\bf smooth} $U$-{\bf transform}.

Denote by $\XX$
the space of all vertically convex vector fields $X$
on $\Int(D_S)$ such that $X-\partial_b$
has compact support, and set
\[
\VV^{\infty}:=\XX/\DD
\,.
\]
The $U$-transform yields
a bijection
\[
\VV(S)^{\infty}\lra\VV^{\infty}
\,,
\quad
[M,X]\longmapsto
\big[X_{(\varphi_X,f_X)}\big]
\,,
\]
by the argument from
Section \ref{subsec:utransform}.
This yields a global parametrisation
of the vertical category as in the contact case.
Furthermore, the analogue of
Proposition \ref{prop:monodromyhomeo} holds true.
The $\Phi$-transform induces the
smooth monodromy homeomorphism
\[
\VV^{\infty}\lra\MM^{\infty}(S)
\,,
\quad
[X]\longmapsto(\varphi_X,f_X)
\,,
\]
such that
\[
[\partial_b]\longmapsto(\id_{V_S},0)
\,.
\]
Its inverse is induced by the $\Psi$-transform.

With the monodromy homeomorphism understood,
write
\[
\Psi_{[X]}:=\Psi_{(\varphi_X,f_X)}
\qquad\text{and}\qquad
X_{[X]}:=(\Psi_{[X]})_*\partial_b
\,.
\]
As in the considerations after
Proposition \ref{prop:monodromyhomeo},
the continuous projection
  \[
  \Pi\co
  \XX\lra\VV^{\infty}
  \,,
  \quad
  X\longmapsto[X]
  \,,
  \]
admits a global continuous section
  \[
  \Sigma\co
  \VV^{\infty}\lra\XX
  \,,
  \quad
  [X]\longmapsto X_{[X]}
  \,,
  \]
induced by the $U$- or $\Psi$-transform.
Notice that $\Sigma\big([\partial_b]\big)=\partial_b$.
Moreover,
the proof of Lemma \ref{lem:freeaction},
which is based on Remark \ref{rem:freeactionstrnex},
yields that, in general, the push-forward action
  \[
  \DD\times\XX\lra\XX
  \,,
  \quad
  (\Phi,X)\longmapsto\Phi_*X
  \,,
  \]
is free.
Therefore, the argument of
Theorem \ref{thm:splitting}
shows that the map
  \[
  \DD\times\VV^{\infty}\lra\XX
  \,,
  \quad
  (\Phi,[X])\longmapsto
  \Phi_*\Sigma\big([X]\big)
  \,,
  \]
is an $\DD$-equivariant homeomorphism satisfying
$\big(\!\id_{\R\times V},[\partial_b]\big)\mapsto\partial_b$.
The inverse is
  \[
  \XX\lra\DD\times\VV^{\infty}
  \,,
  \quad
  X\longmapsto(U_X^{-1},[X])
  \,.
  \]
  
Therefore,
if $V_S$ is a smooth contraction space,
the time-$C^1$ strong deformation retraction
of $\MM^{\infty}(S)$ onto
$Z\big(\DD(V_S)\big)$,
followed by the contraction of this image
induced by a time-$C^1$ contraction of $\DD(V_S)$,
gives a continuous strong deformation retraction
of $\MM^{\infty}(S)$ to $(\id_{V_S},0)$.
Although the first step is time-$C^1$,
the resulting contraction is, in general, only continuous,
since the section $Z$ is only known to be continuous.
Consequently,
\[
\DD
\,\,\simeq\,\,
\DD\times\DD(V_S)
\,\,\simeq\,\,
\DD\times\MM^{\infty}(S)
\,\,\cong\,\,
\DD\times\VV^{\infty}
\,\,\cong\,\,
\XX
\,.
\]
In fact, as in Corollary \ref{cor:orbitdeformationretracts},
the orbit $\DD\cdot\partial_b$
is a strong deformation retract of $\XX$,
and the homotopy equivalence
$\DD\simeq\XX$ is realised by the orbit map
\[
\DD\lra\XX
\,,
\quad
\Phi\longmapsto\Phi_*\partial_b
\,.
\]
  
For a smooth variant of Theorem \ref{thmintr:homotopytypeofaaa}
we consider $D_S=D^{m+1}$, so that $V_S=D^m$,
and write $\XX=\XX^{m+1}$ etc.
First of all,
contractibility of $\DD^m$ is equivalent
to the higher-dimensional analogue
of the Smale conjecture,
siehe \cite[Hülfssätze~1.2.2 und 2.1.2]{ev11}.
The only known examples
for which the contractions are time-$C^1$ are:
$\DD^1\simeq*$ via convex interpolation
and Smale's $\DD^2\simeq*$,
whose time-$C^1$ property we discussed in
Section \ref{sec:spofcontforms}.
Smale \cite{sm59} argues via
  \[
  \DD^2\,\,\simeq\,\,
  \DD^2\times\DD^1\,\,\simeq\,\,
  \XX^2
  \]
using $\DD^1\simeq*$ with the time-$C^1$
property as above
and an explicit time-$C^1$ contraction of $\XX^2$
that involves the Poincar\'e--Bendixson theorem.
The time-$C^1$ contraction $\DD^2\simeq*$
further yields
  \[
  \DD^3\,\,\simeq\,\,
  \DD^3\times\DD^2\,\,\simeq\,\,
  \XX^3
  \,,
  \]
which, together with Hatcher's \cite{hat83}
result $\DD^3\simeq*$, implies that:

\begin{thm}
\label{thm:homotopytypeofxx3}
$\XX^3\simeq*$
\end{thm}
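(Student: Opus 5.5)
The plan is to read off $\XX^3\simeq*$ from the smooth splitting established just before the statement, specialised to $D_S=D^3$ with shadow set $V_S=B^2$ and $m=2$. By the smooth analogue of Theorem \ref{thm:splitting} there is a $\DD^3$-equivariant homeomorphism
\[
\DD^3\times\VV^{\infty}\cong\XX^3,
\qquad
\VV^{\infty}\cong\MM^{\infty}(S),
\]
where the second identification is the smooth monodromy homeomorphism. So it suffices to show that both factors $\DD^3$ and $\MM^{\infty}(S)$ are contractible.

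For the monodromy factor, I would use the strong deformation retraction $R_t$ of $\MM^{\infty}(S)$ onto $Z(\DD^2)$. Since $Z$ is a section of the projection $P$, this gives $\MM^{\infty}(S)\simeq\DD^2$. Smale's theorem \cite{sm59} then gives $\DD^2\simeq*$, and hence $\MM^{\infty}(S)\simeq*$.

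For the diffeomorphism factor, I would invoke Hatcher's theorem \cite{hat83}. It has to be adapted to compactly supported diffeomorphisms of the open ball with the extrinsic strong $C^\infty$-topology of Section \ref{subsec:functionspaces}, as in \cite[Hülfssätze~1.2.2 und 2.1.2]{ev11}, which yields $\DD^3\simeq*$. Combining the two factors gives $\XX^3\cong\DD^3\times\MM^{\infty}(S)\simeq*$. Equivalently, the orbit map $\Phi\mapsto\Phi_*\partial_b$ is a homotopy equivalence $\DD^3\simeq\XX^3$, so contractibility of $\DD^3$ transfers directly.

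The main obstacle is that the splitting uses the continuous $\Psi$-transform $\Psi_{(\cdot,\cdot)}$. This in turn requires $V_S=B^2$ to be a smooth contraction space, that is, a time-$C^1$ contraction of $\DD^2$ whose generating vector fields depend continuously on the initial diffeomorphism in the strong $C^\infty$-topology. I would take this from the discussion in Section \ref{sec:spofcontforms}, namely Smale's construction \cite[Lemmas~1 and 2]{sm59}, which integrates a $C^\infty$-continuous family of vector fields that is smooth in the deformation and space variables. The point to check is that Smale's contraction keeps supports in $B^2$ and fixes $\id$. If it does not, I would first apply an Alexander dilation $\varphi\mapsto\varphi^Y_{-t}\circ\varphi\circ\varphi^Y_t$ with the radial vector field, and then normalise by $C_t(\id)^{-1}\circ C_t$. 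A secondary point is that this sequence of homotopy equivalences is only continuous, not time-$C^1$, because $Z$ is merely continuous; this causes no problem, since the theorem only asserts a homotopy type.
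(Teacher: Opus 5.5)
Your proposal is correct and follows the paper's own argument. It combines the smooth equivariant splitting $\DD^3\times\VV^{\infty}\cong\XX^3$, the monodromy homeomorphism $\VV^{\infty}\cong\MM^{\infty}(S)$ with the deformation retraction onto $Z(\DD^2)$, Smale's time-$C^1$ contraction making $B^2$ a smooth contraction space, and Hatcher's theorem (adapted via Evers) for $\DD^3\simeq*$. Like the paper, you also note that the resulting contraction is only continuous because the section $Z$ is merely continuous, which is harmless for a statement about homotopy type.
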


It remains an open question whether
Hatcher's \cite{hat83} contraction of $\DD^3$
can be chosen time-$C^1$.
An affirmative answer would make $D^3$
a smooth contraction space and hence imply
$\DD^4\simeq\XX^4$.
This is particularly interesting in view of
the non-trivial rational homotopy groups of $\DD^4$
discussed by Botvinnik and Watanabe in \cite{bw23}.


\begin{ack}
  This work grew out of a series of lecture courses
  given at RUB in 2024--2026 and
  discussions in the A5/C5 Seminar.
  We would like to thank the members
  Franziska Beckschulte,
  Johanna Bimmermann,
  Jan Eyll,
  Jonas Fritsch,
  Marc Kegel,
  Lars Kelling,
  Bernd Stratmann,
  Manuel Stange
  and Anton Wilke
  for their contributions.
  We thank Alberto Abbondandolo,
  Barney Bramham,
  Hansjörg Geiges,
  Dominic Jänichen,
  Angeo Consympta von Querenburg,
  Stefan Suhr and
  Claudius Zibrowius
  for inspiring discussions.
\end{ack}


\end{document}